\newcommand{\argmin}{\operatorname{argmin}}
\renewcommand{\div}{\operatorname{div}}
\def\leq{\leqslant}
\def\geq{\geqslant}
\numberwithin{equation}{section}
\newtheoremstyle{thmlemcorr}{10pt}{10pt}{\itshape}{}{\bfseries}{.}{10pt}{{\thmname{#1}\thmnumber{
#2}\thmnote{ (#3)}}}
\newtheoremstyle{thmlemcorr*}{10pt}{10pt}{\itshape}{}{\bfseries}{.}\newline{{\thmname{#1}\thmnumber{
\newtheoremstyle{defi}{10pt}{10pt}{\itshape}{}{\bfseries}{.}{10pt}{{\thmname{#1}\thmnumber{
#2}\thmnote{ (#3)}}}
\newtheoremstyle{remexample}{10pt}{10pt}{}{}{\bfseries}{.}{10pt}{{\thmname{#1}\thmnumber{
#2}\thmnote{ (#3)}}}
\newtheoremstyle{ass}{10pt}{10pt}{}{}{\bfseries}{.}{10pt}{{\thmname{#1}\thmnumber{
A#2}\thmnote{ (#3)}}}
\theoremstyle{thmlemcorr}
\newtheorem{theorem}{Theorem}
\numberwithin{theorem}{section}
\newtheorem{lemma}[theorem]{Lemma}
\newtheorem{corollary}[theorem]{Corollary}
\theoremstyle{thmlemcorr*}
\newtheorem{theorem*}{Theorem}
\newtheorem{lemma*}[theorem]{Lemma}
\newtheorem{corollary*}[theorem]{Corollary}
\newtheorem{proposition*}[theorem]{Proposition}
\newtheorem{problem*}[theorem]{Problem}
\newtheorem{conjecture*}[theorem]{Conjecture}
\theoremstyle{defi}
\newtheorem{hyp}{Assumption}
\newtheorem{problem}{Problem}
\theoremstyle{remexample}
\newtheorem{remark}[theorem]{Remark}
\newtheorem{pro}[theorem]{Proposition}
\theoremstyle{ass}
\newtheorem*{notations*}{Notations}
\title{Sparse Gaussian Processes for solving nonlinear PDEs}
\author{Rui Meng$^1$, Xianjin Yang$^{2,3,*}$}
\address[R. Meng]{
	$^1$Lawrence Berkeley National Laboratory, Berkeley, California, USA.}
\email{mengrui6351@gmail.com}
\thanks{$^*$Corresponding author.}
\address[X. Yang]{$^2$Yau Mathematical Sciences Center, Tsinghua  University, Haidian  District, Beijing, 100084, China.}
\address{$^3$Beijing Institute of Mathematical Sciences and Applications, Huairou District, Beijing, 101408, China.}
\email{yxjmath@gmail.com}
\keywords{Partial Differential Equations; Sparse Gaussian Process}
\subjclass[2010]{
	65L30, %Stability and convergence of numerical methods,
	65N75, 82C80, 
	35A01} %Existence problems: global existence, local existence, non-existence
\thanks{
}
\date{\today}
\begin{document}
\maketitle

\begin{abstract}
{\color{black}This article proposes an efficient numerical method for solving nonlinear partial differential equations (PDEs) based on sparse Gaussian processes (SGPs). Gaussian processes (GPs) have been extensively studied for solving PDEs by formulating the problem of finding a reproducing kernel Hilbert space (RKHS) to approximate a PDE solution. The approximated solution lies in the span of base functions generated by evaluating derivatives of different orders of kernels at sample points. However, the RKHS specified by GPs can result in an expensive computational burden due to the cubic computation order of the matrix inverse. Therefore, we conjecture that a solution exists on a ``condensed" subspace that can achieve similar approximation performance, and we propose a SGP-based method to reformulate the optimization problem in the ``condensed" subspace. This significantly reduces the computation burden while retaining desirable accuracy. The paper rigorously formulates this problem and provides error analysis and numerical experiments to demonstrate the effectiveness of this method. The numerical experiments show that the SGP method uses fewer than half the uniform samples as inducing points and achieves comparable accuracy to the GP method using the same number of uniform samples, resulting in a significant reduction in computational cost.

% The paper examines a particular probabilistic Gaussian process (GP) method and studies the problem of finding a "condensed" subspace to approximate the solution of a PDE. The GP method produces a solution to a PDE in the span of base functions generated by evaluating derivatives of different orders of kernels at sample points. However, this approach may provide redundant information about the approximated solution, which increases computational costs. To address this issue, the SGP method uses inducing points to summarize the information of samples and improve the scalability of the GP method while retaining desirable accuracy. The paper provides rigorous error analysis and numerical experiments to demonstrate the effectiveness of the proposed SGP method in reducing computational costs.

Our contributions include formulating the nonlinear PDE problem as an optimization problem on a ``condensed" subspace of RKHS using SGP, as well as providing an existence proof and rigorous error analysis. Furthermore, our method can be viewed as an extension of the GP method to account for general positive semi-definite kernels.

% proposing the SGP method as a way to generate a "condensed" subspace containing efficient information for solving nonlinear PDEs, as well as providing an existence proof and rigorous error analysis. Furthermore, the SGP method can be viewed as an extension of the GP method for considering general positive semi-definite kernels.
}
\end{abstract}

\section{Introduction}
This work develops a numerical method based on sparse Gaussian processes (SGPs) \cite{liu2020gaussian, titsias2009variational} to solve nonlinear partial differential equations (PDEs). PDEs have been widely used to model applications in science, economics, and biology \cite{quarteroni2008numerical, thomas2013numerical}. However, very few PDEs admit explicit solutions. Standard numerical approaches including finite difference \cite{thomas2013numerical} and finite element methods \cite{hughes2012finite} for solving PDEs are prone to the curse of dimensions. Recently, to keep pace with increasing problem sizes, machine learning methods {\color{black}have attracted} a lot of attention \cite{raissi2019physics, zang2020weak, lu2021learning, chen2021solving, mou2022numerical}.  Probabilistic algorithms to solve linear problems are presented in \cite{owhadi2015bayesian, owhadi2017multigrid, cockayne2017probabilistic}. The authors of \cite{chen2021solving} extend these ideas and propose a Gaussian process (GP) framework for solving general nonlinear PDEs. In particular, for time-dependent PDEs, GP methods based on time-discretization are  considered in \cite{raissi2018numerical, kramer2021probabilistic, wang2021bayesian}. 
{\color{black}Most of the algorithms mentioned above seek approximations of a solution to a PDE in a normed vector space with base functions chosen beforehand. For instance, for the GP method in \cite{chen2021solving}, by the representer theorem \cite[Section 17.8]{owhadi2019operator}, a solution of a PDE is approximated in the span of base functions, each of which is associated with a sample. If the samples are not chosen properly, the base functions may contain redundant information about the approximated solution. Therefore, a set of random samples may result in consuming unnecessary extra storage and in a waste of computational time.  The computational cost of the GP method \cite{chen2021solving}} grows cubically with respect to the number of samples due to the inversion of covariance matrices {\color{black}of samples}. To improve the performance {\color{black}of} \cite{chen2021solving}, \cite{mou2022numerical} proposes to approximate the solution of a PDE in the space generated by  random Fourier features \cite{rahimi2007random, yu2016orthogonal}, where the corresponding covariance matrix is a low-rank approximation to that of the GP method in \cite{chen2021solving}. {\color{black}The numerical experiments in \cite{mou2022numerical} show that approximating solutions of PDEs in the spaces generated by low-rank kernels reduces the precomputation time, saves the memory, and achieves comparable accuracy to the GP method. This motivates us to question whether the space we use to approximate a solution of a PDE contains redundant information for general numerical methods of solving nonlinear PDEs and whether we can ``compress" the space until it reaches a minimum subspace but contains sufficient information to express a solution of the PDE. In other words, we are interested in the following problem:
\begin{problem}
\label{main_prob}
Let $\mathcal{U}$ be a normed vector space equipped with the norm $\|\cdot\|_{\mathcal{U}}$ and let $u^*\in \mathcal{U}$ be the solution of a nonlinear PDE. For any $\mathcal{W}\subset \mathcal{U}$, denote by $u^\dagger_{\mathcal{W}}$ a best approximation of $u^*$ in $\mathcal{W}$ in terms of a specific numerical method and by $|\mathcal{W}|$ the cardinality of $\mathcal{W}$. 
Let ${u^\dagger_{\widetilde{\mathcal{U}}}}$ be the approximation of $u^*$ in a subspace $\widetilde{\mathcal{U}}$ of $\mathcal{U}$, and $\|{u^\dagger_{\widetilde{\mathcal{U}}}}-u^*\|_{\mathcal{U}}\leq \epsilon$ for some $\epsilon>0$. Given a positive number $\delta$ such that $\epsilon\leq \delta$, find a subspace $\mathcal{U}_c$, with the minimum cardinality,  of $\widetilde{\mathcal{U}}$ such that the approximation $u^\dagger_{{\mathcal{U}}_c}$ of $u^*$ in $\mathcal{U}_c$ satisfies $\|u^* - u^\dagger_{{\mathcal{U}}_c}\|_{\mathcal{U}}\leq \delta$. That is, we find $\mathcal{U}_c$ such that
\begin{align*}
\mathcal{U}_c = \argmin_{\mathcal{V}\subset\widetilde{\mathcal{U}}, \|u^*-u^\dagger_{\mathcal{V}}\|_{\mathcal{U}}\leq \delta} |\mathcal{V}|. 
\end{align*}
\end{problem}
}

 {\color{black}
\begin{figure}
    \centering
    \begin{tikzpicture}[scale=1.5]
    % draw domain O
    \draw (-1.7,-1.2) rectangle (1.9, 1.2);
    % draw domain A
    \draw (0,0) ellipse (1.5 and 1);
    % label domain A
    \node at (-1.2,0.4) {$\widetilde{\mathcal{U}}$};
    % draw subdomain B inside A
    \draw (0.1, 0.1) ellipse (1.1 and 0.7);
    % label subdomain B
    \node at (-0.6,0.3) {$\mathcal{U}_c$};
    \node at (-1.4,0.9) {$\mathcal{U}$};
    % draw u* point
    \filldraw (1.3,-0.6) circle (0.03) node[anchor=west] {$u^*$};
    \draw[opacity=0.3, thick, dashed, blue] (1.3, -0.6) circle (0.5);
    \draw[opacity=0.3, thick, dashed, red] (1.3, -0.6) circle (0.3);
    \coordinate (a) at (1.3, -0.6);
    \coordinate (b) at (1,-0.62);
    \coordinate (c) at (0.9,-0.3);
    \draw[red, opacity=0.3] (a) -- (b) node[midway, below, text opacity=1]{$\epsilon$};
    \draw[blue, opacity=0.3] (a) -- (c) node[midway, above, text opacity=1]{$\delta$};
    % draw u~ point
    \filldraw (1,-0.62) circle (0.03) node[anchor=east] {${u^\dagger_{\widetilde{\mathcal{U}}}}$};
    % draw u^+ point
    \filldraw (0.9,-0.3) circle (0.03) node[anchor=south] {$u^\dagger_{\mathcal{U}_c}$};
\end{tikzpicture}
    \caption{\color{black}The illustration of Problem \ref{main_prob}. $u^*$ is the solution of a PDE in the space $\mathcal{U}$. Given an approximation $u^\dagger_{\widetilde{\mathcal{U}}}$ of $u^*$ in the subspace $\widetilde{\mathcal{U}}$ of $\mathcal{U}$ such that $\|{u^\dagger_{\widetilde{\mathcal{U}}}}-u^*\|_{\mathcal{U}}\leq \epsilon$ for some $\epsilon>0$, and  given a positive number $\delta$ such that $\epsilon\leq \delta$, we ``compress" $\widetilde{\mathcal{U}}$ to find a minimum subspace $\mathcal{U}_c$  of $\widetilde{\mathcal{U}}$ such that the approximation $u^\dagger_{{\mathcal{U}}_c}$ of $u^*$ in $\mathcal{U}_c$ satisfies $\|u^* - u^\dagger_{{\mathcal{U}}_c}\|_{\mathcal{U}}\leq \delta$. }
    \label{fig:compress_prob}
\end{figure}
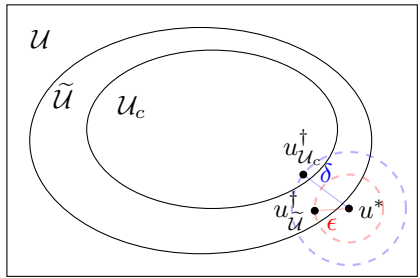
}
{\color{black}
Problem \ref{main_prob} is concerned about the efficient information in the space $\widetilde{\mathcal{U}}$ used to approximate the true solution $u^*$, which is illustrated in Figure \ref{fig:compress_prob}.   Using a ``condensed" space not only saves the storage but reduces the computational costs. Merely decreasing the number of base functions of $\mathcal{U}$ may not sufficiently handle Problem \ref{main_prob}, which is implied by the numerical experiments in Section \ref{secNumResults} (see discussions in Subsection \ref{subsecEliip}). A thorough general discussion of Problem \ref{main_prob} for all the numerical methods of solving nonlinear PDEs is out of the scope of this paper. Here,  we propose to study Problem \ref{main_prob} in terms of the  framework in \cite{chen2021solving},} where a numerical approximation of the solution to a nonlinear PDE is viewed as the maximum \textit{a posteriori} (MAP) estimator of a GP conditioned on solving the PDE at a finite set of sample points. {\color{black}By using the representer theorem \cite[Section 17.8]{owhadi2019operator}, the approximated solution lies in the span of base functions which are obtained by evaluating kernel functions or derivatives of kernels at sample points. Thus, base functions generated by random samples may provide redundant information about the approximated solution. To address Problem \ref{main_prob} with respect to the GP method in \cite{chen2021solving}, we propose a} low-rank approximation method based on SGPs, which leverages a few inducing points to summarize the information from observations. In {\color{black}the} SGP method, we replace the Gaussian prior by an alternative with a zero mean and a low-rank kernel induced from inducing points. The approximation to the solution of a PDE can be viewed as a MAP estimator of a SGP conditioned on a noisy observation about the values of linear operators of the true solution, where the observation satisfies the PDE at the samples.  Meanwhile, the numerical approximation can also be viewed as a function with the minimum norm in the reproducing kernel Hilbert space (RKHS) associated with the low-rank kernel. {\color{black}By Theorem \ref{thmexist} in Section \ref{secGF}, a solution given by the SGP method lies in the span of base functions generated by evaluating kernels or derivatives of kernels at inducing points. Thus, if a small number of inducing points are sufficient to capture the information in the sample points, the RKHS generated by the low-rank kernel is sufficient to serve as a candidate of the minimum subspace in Problem \ref{main_prob} for the GP method \cite{chen2021solving}}.  {\color{black}By} a stable Woodbury matrix identity stated in Subsection \ref{subsecComGram}, the dimension of the matrix to be inverted in the SGP method depends only on the number of inducing points selected. {\color{black}In the numerical experiments in Section \ref{secNumResults}, the SGP method reduces the computational cost by sacrificing only negligible amounts of accuracy by using half of the samples as inducing points.}

This inducing-point approach is initially proposed in the deterministic training conditional (DTC) approximation \cite{Seeger_2003}, and then it has been fully studied in several works. \cite{quinonero2005unifying} provides a unifying view for the approximation of GP regressions. This framework includes the DTC, the fully independent training conditional (FITC) approximation \cite{Snelson_2006}, and the partial independent training conditional (PITC) approximation. All {\color{black}the} above methods modify the joint prior using a conditional {\color{black}independence} assumption between training and test data given inducing variables. Alternatively,  \cite{titsias2009variational,Hensman_2013} leverage the inducing-point approach into the computing of the evidence lower bound of the log marginal likelihood. They retain exact priors but approximate the posteriors via variational inference. On the other hand, the estimation of inducing inputs has been generalized into an augmented feature space in \cite{lazaro2009inter}. In particular, SGPs are extended in the spectral domain with and without variational inference \cite{lazaro2010sparse,hensman2017variational}. Moreover, some works {\color{black} directly} speed up the computation of GP regressions through fast matrix-vector multiplication and pre-conditional conjugate gradients \cite{gardner2018gpytorch}, and through structured kernel interpolation \cite{wilson2015kernel}. 

Our contributions are summarized as follows:
\begin{enumerate}
	\item We introduce a general framework based on SGPs to solve nonlinear PDEs. {\color{black}Our method seeks approximated solutions in RKHSs associated with positive semi-definite kernels generated by inducing points. The SGP method provides a way of ``compressing" function spaces such that ``condensed" subspaces provide sufficient information about  approximated solutions to PDEs. Meanwhile, the framework in this paper can be viewed as an extension of the one in \cite{chen2021solving} to  take into consideration of more general positive semi-definite kernels};
	{\color{black} \item In the SGP method, the inversion of the covariance matrix requires only the inversion of a matrix whose size is proportional to the number of inducing points which is much less than the number of samples.} The numerical experiments in  Section \ref{secNumResults} show that the SGP method{\color{black},  which leverages less than half of the uniform samples as inducing points,}  reduces the computational cost {\color{black}by sacrificing only negligible amounts of accuracy} compared to the GP method in \cite{chen2021solving} using the same number of uniform samples;
	\item  We give a rigorous existence proof for the numerical approximation to the solution of a general PDE and analyze error bounds in Section \ref{secGF}. Our analysis bases on the arguments in \cite{chen2021solving},  learning theory \cite{smale2005shannon, smale2009geometry}, and Nystr\"{o}m approximations \cite{jin2013improved}. The error bound given in Theorem \ref{error_analy} provides qualitative hints to improve the accuracy of our method;
	\item The numerical experiments show that the choice of the positions of inducing points and parameters of kernels has a profound impact on the accuracy of the numerical approximations. In Section \ref{secSummary}, we give a probabilistic interpretation for the SGP method in the setting of \cite{titsias2009variational}, which motivates  future work for hyperparameter learning.
\end{enumerate}
The paper is organized as follows. Section \ref{secSummary}  summarizes our SGP method by solving a nonlinear elliptic PDE. In Section \ref{secGF}, we provide a framework of the SGP method to solve general PDEs. 
Numerical experiments are presented in Section \ref{secNumResults}. 
Further discussion and future work appear in Section \ref{secConclu}. 

\begin{notations*}
Vectors are in bold font. For a real-valued vector $\boldsymbol{v}$, we represent by $|\boldsymbol{v}|$ the Euclidean norm of $\boldsymbol{v}$ and by $\boldsymbol{v}^T$ its transpose. For a matrix $A$, we denote by $\|A\|$ the spectral norm of $A$. Let $\langle \boldsymbol{u}, \boldsymbol{v}\rangle$ or $\boldsymbol{u}^T\boldsymbol{v}$ be the inner product of vectors $\boldsymbol{u}$ and $\boldsymbol{v}$.  For a normed vector space $V$, let $\|\cdot\|_V$ be the norm of $V$. 
Let $\mathcal{U}$ be a Banach space associated with a quadratic norm $\|\cdot\|_{\mathcal{U}}$ and let $\mathcal{U}^*$ be the dual of $\mathcal{U}$. Denote by $[\cdot, \cdot]$ the duality pairing between $\mathcal{U}^*$ and $\mathcal{U}$. We assume that there exists a linear, bijective, symmetric ($[\mathcal{K}_{\mathcal{U}}\phi, \psi]=[\mathcal{K}_{\mathcal{U}}\psi, \phi]$), and positive ($[\mathcal{K}_{\mathcal{U}}\phi, \phi]>0$ for $\phi\not=0$) covariance operator $\mathcal{K}_{\mathcal{U}}: \mathcal{U}^*\mapsto \mathcal{U}$, such that 
\begin{align*}
	\|u\|_{\mathcal{U}}^2 = [\mathcal{K}^{-1}u, u], \forall u\in \mathcal{U}. 
\end{align*}
Let $\{\phi_i\}_{i=1}^P$ be a set of $P\in\mathbb{N}$ elements in $\mathcal{U}^*$ and let  $\boldsymbol{\phi}:=(\phi_1, \dots, \phi_P)$ be in the product space ${(\mathcal{U}^*)}^{\bigotimes P}$. Then, for $u\in \mathcal{U}$, we denote the pairing $[\boldsymbol{\phi}, u]$ by
\begin{align*}
	[\boldsymbol{\phi}, u]:=([{\phi}_1, u], \dots, [{\phi}_P, u]).
\end{align*}
Furthermore, for $\boldsymbol{u}:=(u_1,\dots,u_S)\in \mathcal{U}^{\bigotimes S}$, $S\in \mathbb{N}$, we represent by $[\boldsymbol{\phi}, \boldsymbol{u}]\in\mathbb{R}^{P\times S}$ the matrix with entries $[\phi_i, u_j]$. We write $\|\cdot\|$ as the induced norm for linear operators on $\mathcal{U}$, i.e., for any $\mathcal{F}: \mathcal{U}\mapsto\mathcal{U}$, we define $\|\mathcal{F}\|=\max_{\|u\|_{\mathcal{U}}=1}\|\mathcal{F}(u)\|_{\mathcal{U}}$. Given any $R\in \mathbb{N}$, we denote by $[R]$ the set of nonnegative integers less equal to $R$. 
Finally, we represent by $C$ a positive real number whose value  may change line by line. 
\end{notations*}
\section{A Recapitulation of the SGP Method}
\label{secSummary}

 {\color{black} To briefly summarize the SGP method and compare it to the GP method proposed in \cite{chen2021solving}, in Subsection \ref{subsecSGP}, we demonstrate the SGP method by solving a nonlinear elliptic PDE \eqref{illpde}.   Both the GP and the SGP methods can be applied to more general forms of PDEs, which is discussed in Section \ref{secGF}. Furthermore, we can naturally extend what we discuss below to solve PDE systems. We show a numerical example about solving a PDE system in Subsection \ref{subsecMFG}.} The SGP method approximates the solution of the PDE by a minimizer of an optimal recovery problem, which {\color{black}searches} a solution with the minimum norm in the RKHS associated with a low-rank kernel generated by inducing points and {\color{black}where} the values of linear operators of the solution are close to those satisfying the PDE at finite samples. From a probabilistic perspective,  the optimal recovery problem can also be interpreted as the MAP estimation for a SGP constrained by a noisy observation of {\color{black}linear operator values of the true solution. The SGP method also requires the inversion of a covariance matrix.} In Subsection \ref{subsecComGram}, we describe a robust way to compute the inverse of the covariance matrix, which is the cornerstone of the SGP method to reduce computational complexity.  Finally, in Subsection \ref{subsecProb}, we give a probabilistic explanation for the SGP method, which motivates a way to choose hyperparameters in future work.  

\subsection{The SGP Method}
\label{subsecSGP}
Let $\Omega$ be a bounded open subset of {\color{black}$\mathbb{R}^2$} with a Lipschitz boundary $\partial \Omega$. Given continuous functions $f:\Omega \mapsto \mathbb{R}$ and $g: \partial\Omega\mapsto\mathbb{R}$, we {\color{black}seek a function} $u^*$ solving 
\begin{align}
\label{illpde}
\begin{cases}
{\color{black}\Delta u^*(x) = u^*(x)(\partial_{x_1}u^*(x) + \partial_{x_2}u^*(x))+ f(x), \forall x:=(x_1, x_2)\in \Omega},\\
u^*(x) = g(x), \forall {\color{black}x:=(x_1, x_2)}\in \partial \Omega,
\end{cases}
\end{align}
{\color{black}where we assume that \eqref{illpde} admits a unique strong solution.} We propose to approximate the solution to \eqref{illpde} by functions in a RKHS generated by inducing points. First, we show the construction of such RKHS. We assume that the solution of \eqref{illpde} is in the space  $\mathcal{U}$, where $\mathcal{U}$ is {\color{black}a} RKHS associated with  a nondegenerate, symmetric, and positive definite kernel $K:\overline{\Omega}\times\overline{\Omega}\mapsto\mathbb{R}$. We view $\mathcal{U}$ as the ambient space. Let $\mathcal{U}^*$ be the dual space of $\mathcal{U}$ and let $[\cdot, \cdot]: \mathcal{U}^*\times\mathcal{U}\mapsto \mathbb{R}$ be the duality pairing. 
Next, we sample $M$ inducing points $\{\widehat{\boldsymbol{x}}_j\}_{j=1}^M$ in $\overline{\Omega}$ such that $\{\widehat{\boldsymbol{x}}_j\}_{j=1}^{M_\Omega}\subset\Omega$ and $\{\widehat{\boldsymbol{x}}_j\}_{j=M_{\Omega}+1}^M\subset\partial\Omega$ for $1\leq M_{\Omega}\leq M$. Then, we define the functionals $\phi_j^{(1)}=\delta_{\widehat{\boldsymbol{x}}_j}$ for $1\leq j\leq M$, {\color{black} $\phi_k^{(2)}=\delta_{\widehat{\boldsymbol{x}}_k}\circ (\partial_{x_1} + \partial_{x_2})$ and  $\phi_k^{(3)}=\delta_{\widehat{\boldsymbol{x}}_k}\circ\Delta$} for $1\leq k\leq M_\Omega$. Denote by $\boldsymbol{\phi}^{(1)}$,  $\boldsymbol{{\phi}}^{(2)}$, {\color{black}$\boldsymbol{{\phi}}^{(3)}$} the vectors with entries $\phi_j^{(1)}$ $\phi_k^{(2)}$, {\color{black}$\phi_k^{(3)}$}, respectively. Let $\boldsymbol{\phi}$ be the vector concatenating $\boldsymbol{\phi}^{(1)}$ $\boldsymbol{\phi}^{(2)}$, {\color{black}and $\boldsymbol{\phi}^{(3)}$}. For simplicity, we denote by $\phi_j$ the $j^{\textit{th}}$ component of $\boldsymbol{\phi}$. Define $K(\cdot, \boldsymbol{\phi})$ as the vector with entries $\int K(\cdot, \boldsymbol{x}')\phi_j(\boldsymbol{x}')\dif \boldsymbol{x}'$ and define $K(\boldsymbol{\phi}, \boldsymbol{\phi})$ as the matrix with elements $\int\int K(\boldsymbol{x}, \boldsymbol{x}')\phi_i(\boldsymbol{x})\phi_j(\boldsymbol{x}')\dif \boldsymbol{x}\dif \boldsymbol{x}'$. Suppose that the inducing points are chosen such that $K(\boldsymbol{\phi}, \boldsymbol{\phi})$ is {\color{black}positive definite}. Then, we define a new kernel $Q:\overline{\Omega}\times\overline{\Omega}\mapsto\mathbb{R}$ as
\begin{align*}
Q(\boldsymbol{x}, \boldsymbol{x}')=K(\boldsymbol{x}, \boldsymbol{\phi})(K(\boldsymbol{\phi}, \boldsymbol{\phi}))^{-1}K(\boldsymbol{\phi}, \boldsymbol{x}'). 
\end{align*}
Using the positivity of the kernel $K$, we see that $Q$ is {\color{black}positive semi-definite} (see Lemma \ref{lmKIcov} in Section \ref{secGF}). Thus, by the Moore--Aronszajn theorem, $Q$ induces a unique RKHS, denoted by $\mathcal{U}_{Q}$.  We call $\mathcal{U}_{{Q}}$ the RKHS generated by inducing points. 

Next, we approximate the solution $u$ in $\mathcal{U}_{{Q}}$. We take $N$ samples $\{\boldsymbol{x}_i\}_{i=1}^N$ such that $\{\boldsymbol{x}_i\}_{i=1}^{N_\Omega}\subset \Omega$ and $\{\boldsymbol{x}_i\}_{i=N_\Omega+1}^{N}\subset \partial \Omega$ for $1\leq N_\Omega\leq N$. Let $\|\cdot\|_{\mathcal{U}_{Q}}$ be the norm of $\mathcal{U}_{Q}$.  {\color{black}To find an approximated solution in $\mathcal{U}_{{Q}}$, for the first attempt, we consider
	a similar formulation to that of the GP method in \cite{chen2021solving} and solve the optimal recovery problem
	\begin{align}
		\label{nonregprob}
		\begin{cases}
			\min\limits_{u\in \mathcal{U}_{Q}} \|u\|_{\mathcal{U}_Q}^2 \\
			\text{s.t. } \Delta u(\boldsymbol{x}_j) =  u(\boldsymbol{x}_j)(\partial_{x_1}u(\boldsymbol{x}_j) + \partial_{x_2}u(\boldsymbol{x}_j)) + f(\boldsymbol{x}_j), \forall j=1,\dots, N_\Omega,\\
			\quad\quad u(\boldsymbol{x}_j) = g(\boldsymbol{x}_j), \forall j=N_{\Omega}+1,\dots,  N.
		\end{cases}
	\end{align}
	However, since $Q$ is only positive semi-definite, the representer theorem \cite[Section 17.8]{owhadi2019operator} does not apply. Furthermore, since $\mathcal{U}_Q$ is a subspace of $\mathcal{U}$, the solution $u^*$ to \eqref{illpde} may not lie in  $\mathcal{U}_Q$. Hence, we may not find a function in $\mathcal{U}_Q$ satisfying the constraints in \eqref{nonregprob}.     Thus, instead of seeking a function in $\mathcal{U}_{{Q}}$ satisfying the constraints of \eqref{nonregprob} exactly, we consider a relaxed version of \eqref{nonregprob}.
}

{\color{black}More precisely, our SGP method introduces a slack vector $\boldsymbol{z}$ and} approximates the solution $u^*$ of \eqref{illpde} with a minimizer of the following regularized optimal recovery problem
\begin{align}
\label{regoptprobs1}
\begin{cases}
\min\limits_{\boldsymbol{z}\in \mathbb{R}^{2N_{\Omega}+N}, u\in \mathcal{U}_{Q}} \sum\limits_{j=1}^{N}|z^{(1)}_j-u(\boldsymbol{x}_j)|^2{\color{black}+\sum\limits_{j=1}^{N_{\Omega}}|z^{({2})}_j-(\partial_{x_1}+\partial_{x_2}) u(\boldsymbol{x}_j)}|^2+\sum\limits_{j=1}^{N_{\Omega}}|z^{({\color{black}3})}_j-\Delta u(\boldsymbol{x}_j)|^2+\gamma\|u\|_{\mathcal{U}_Q}^2 \\
\text{s.t. } {\color{black}z_j^{(3)}=z_j^{(1)}z_j^{(2)}  +}  f(\boldsymbol{x}_j), \forall j=1,\dots, N_\Omega,\\
\quad\quad{z}^{(1)}_j = g(\boldsymbol{x}_j), \forall j=N_{\Omega}+1,\dots,  N,
\end{cases}
\end{align}
where $\gamma>0$ is a given small regularization parameter and
\begin{align*}
{\color{black}
	\boldsymbol{z}:=(z_1^{(1)},\dots, z^{(1)}_{N_\Omega}, z_{N_\Omega+1}^{(1)}, \dots, z_{N}^{(1)}, z^{(2)}_1,\dots, z^{(2)}_{N_\Omega}, z^{(3)}_1,\dots, z^{(3)}_{N_\Omega}).
}
\end{align*} Let $(u^\dagger, \boldsymbol{z}^\dagger)$ be a minimizer of \eqref{regoptprobs1}.  A minimizer $u^\dagger$ of \eqref{regoptprobs1} can be viewed as a MAP estimator of a SGP $\xi\sim\mathcal{N}(0, Q)$ conditioned on a noisy observation $\boldsymbol{z}^\dagger$ about values of the linear operators of $u^\dagger$, where $\boldsymbol{z}^\dagger$ satisfies the PDE at the sample
points. A probabilistic interpretation of \eqref{regoptprobs1} is given in Subsection \ref{subsecProb}.  

{\color{black}To solve the infinite dimensional minimization problem \eqref{regoptprobs1}, we derive a representer formula for a minimizer $u$.}
Define $\boldsymbol{\psi}$ as the vector consisting of {\color{black}$\delta_{{\boldsymbol{x}}_j}$, $\delta_{{\boldsymbol{x}}_k}\circ(\partial_{x_1}+\partial_{x_2})$, and  $\delta_{{\boldsymbol{x}}_k}\circ\Delta$,  for $1\leq j\leq N$ and $1\leq k\leq N_\Omega$.} We represent by $\psi_j$ the $j^{\textit{th}}$ component of $\boldsymbol{\psi}$. Let $Q(\cdot, \boldsymbol{\psi})$ be the vector containing entries $\int Q(\cdot, \boldsymbol{x}')\psi_j(\boldsymbol{x}')\dif \boldsymbol{x}'$ and let $Q(\boldsymbol{\psi}, \boldsymbol{\psi})$ be the matrix consisting of elements $\int\int Q(\boldsymbol{x}, \boldsymbol{x}')\psi_i(\boldsymbol{x})\psi_j(\boldsymbol{x}')\dif \boldsymbol{x}\dif \boldsymbol{x}'$. Theorem \ref{thmexist} in Section \ref{secGF} shows that \eqref{regoptprobs1} admits a minimizer $u^\dagger$ such that
\begin{align}
\label{udagform}
u^\dagger(x) = Q(x, \boldsymbol{\psi})^T(\gamma I + Q(\boldsymbol{\psi}, \boldsymbol{\psi}))^{-1}\boldsymbol{z}^\dagger,
\end{align}
where $I$ is the identity matrix and  $\boldsymbol{z}^\dagger$ is a minimizer of 
\begin{align}
\label{regoptzot}
\begin{cases}
\min\limits_{\boldsymbol{z}\in \mathbb{R}^{2N_{\Omega}+N}} \boldsymbol{z}^T(\gamma I + Q(\boldsymbol{\psi}, \boldsymbol{\psi}))^{-1}\boldsymbol{z}\\
\text{s.t. } {\color{black}{z}^{(3)}_j=z_{j}^{(1)}z_{j}^{(2)}+}f(\boldsymbol{x}_j), \forall j=1,\dots, N_\Omega,\\
\quad\quad{z}^{(1)}_j = g(\boldsymbol{x}_j), \forall j=N_{\Omega}+1,\dots,  N. 
\end{cases}
\end{align}
Next, we use the technique of eliminating variables in \cite{chen2021solving} to remove the constraints of \eqref{regoptzot}. More precisely, observing $z_j^{(1)}=g(\boldsymbol{x}_j)$ and {\color{black}$z_j^{(3)}=z_j^{(1)}z_j^{(2)}-f(\boldsymbol{x}_j)$},  we rewrite \eqref{regoptzot} as
\begin{align}
\label{regoptzotmi}
{\color{black}\min\limits_{(\boldsymbol{z}^{(1)}, \boldsymbol{z}^{(2)})\in \mathbb{R}^{2N_\Omega}}(\boldsymbol{z}^{(1)}, g(\boldsymbol{x}_{\partial\Omega}), 
 \boldsymbol{z}^{(2)}, \boldsymbol{z}^{(1)}\boldsymbol{z}^{(2)}-f(\boldsymbol{x}_{\Omega}))(\gamma I + Q(\boldsymbol{\psi}, \boldsymbol{\psi}))^{-1}\begin{pmatrix}
\boldsymbol{z}^{(1)}\\ g(\boldsymbol{x}_{\partial\Omega})\\ 
\boldsymbol{z}^{(2)}\\
\boldsymbol{z}^{(1)}\boldsymbol{z}^{(2)}-f(\boldsymbol{x}_\Omega)
\end{pmatrix},}
\end{align}
where {\color{black}$\boldsymbol{z}^{(1)}=(z_j^{(1)})_{j=1}^{N_\Omega}$, $\boldsymbol{z}^{(2)}=(z_j^{(2)})_{j=1}^{N_\Omega}$,  $g(\boldsymbol{x}_{\partial\Omega})=(g(\boldsymbol{x}_j))_{j=N_\Omega+1}^N$, $f(\boldsymbol{x}_{\Omega})=(f(\boldsymbol{x}_j))_{j=1}^{N_\Omega}$, and $\boldsymbol{z}^{(1)}\boldsymbol{z}^{(2)}=(z_j^{(1)}z_j^{(2)})_{j=1}^{N_\Omega}$.} Then, we obtain  $\boldsymbol{z}^\dagger$ by solving \eqref{regoptzotmi} using the Gauss--Newton method and get $u^\dagger$ by \eqref{udagform}.

\subsection{Computing $(\gamma I + Q(\boldsymbol{\psi}, \boldsymbol{\psi}))^{-1}$}
\label{subsecComGram} 
The bottleneck of solving  \eqref{regoptzotmi} is to calculate the inverse of $\gamma I + Q(\boldsymbol{\psi}, \boldsymbol{\psi})$, whose size is proportional to the number of sample points. The computational complexity of standard Cholesky decomposition algorithms  is $O(N^3)$ in general. {\color{black}Some fast algorithms \cite{schafer2021sparse, cao2023variational} are developed by considering sparse inverse Cholesky factorizations. However,  their works only consider the case where the covariance matrix contains pointwise evaluations  but not pointwise values of differential operators, which are required in the SGP method.} {\color{black}In a recent study by \cite{chen2023sparse}, a sparse Cholesky factorization algorithm was proposed for kernel matrices, leveraging the sparsity of the Cholesky factor through a novel ordering of Diracs and derivative measurements. Our approach, along with the algorithm described in \cite{chen2023sparse}, focuses on effectively solving nonlinear PDEs. Our method accomplishes this objective by approximating representation functions using a smaller set of bases. In contrast, their approach achieves the same goal by efficiently performing computations with dense kernel matrices through a novel sparse Cholesky decomposition technique.

In this paper, by using} the structure of $Q(\boldsymbol{\psi}, \boldsymbol{\psi})$, we show that the computational complexity can be reduced and depends only on the number of inducing points.  The technique is  from Section 3.4.3 of \cite{rasmussen2006gaussian} and  is well explained in the documents of GPflows\footnote{{\color{black}https://gpflow.github.io/GPflow/2.4.0/notebooks/theory/SGPR\_notes.html}}. We provide the full details below for the sake of completeness. 

Using the Woodbury identity, we get
\begin{align}
\label{wdbrid}
\begin{split}
    (\gamma I + Q(\boldsymbol{\psi}, \boldsymbol{\psi}))^{-1}=&(\gamma I + K(\boldsymbol{\psi}, \boldsymbol{\phi})(K(\boldsymbol{\phi}, \boldsymbol{\phi}))^{-1}K(\boldsymbol{\phi}, \boldsymbol{\psi}))^{-1}\\
=&\gamma^{-1}I-\gamma^{-2}K(\boldsymbol{\psi}, \boldsymbol{\phi})(K(\boldsymbol{\phi}, \boldsymbol{\phi})+\gamma^{-1}K(\boldsymbol{\phi}, \boldsymbol{\psi})K(\boldsymbol{\psi}, \boldsymbol{\phi}))^{-1}K(\boldsymbol{\phi}, \boldsymbol{\psi}). 
\end{split}
\end{align}
The equation \eqref{wdbrid} is not numerically stable when $\gamma$ is small.
To get a better conditioned matrix, we perform the Cholesky decomposition on $K(\boldsymbol{\phi}, \boldsymbol{\phi})$ and get $K(\boldsymbol{\phi}, \boldsymbol{\phi})=LL^T$ (see Remark \ref{etanug} below). Then, we consider
\begin{align}
\label{beform}
\begin{split}
    (\gamma I + Q(\boldsymbol{\psi}, \boldsymbol{\psi}))^{-1}=&\gamma^{-1}I-\gamma^{-2}K(\boldsymbol{\psi}, \boldsymbol{\phi})(K(\boldsymbol{\phi}, \boldsymbol{\phi})+\gamma^{-1}K(\boldsymbol{\phi}, \boldsymbol{\psi})K(\boldsymbol{\psi}, \boldsymbol{\phi}))^{-1}K(\boldsymbol{\phi}, \boldsymbol{\psi})\\
    =&\gamma^{-1}I-\gamma^{-2}K(\boldsymbol{\psi}, \boldsymbol{\phi})L^{-T}L^{T}(K(\boldsymbol{\phi}, \boldsymbol{\phi})+\gamma^{-1}K(\boldsymbol{\phi}, \boldsymbol{\psi})K(\boldsymbol{\psi}, \boldsymbol{\phi}))^{-1}LL^{-1}K(\boldsymbol{\phi}, \boldsymbol{\psi})\\
    =&\gamma^{-1}I-\gamma^{-2}K(\boldsymbol{\psi}, \boldsymbol{\phi})L^{-T}(L^{-1}(K(\boldsymbol{\phi}, \boldsymbol{\phi})+\gamma^{-1}K(\boldsymbol{\phi}, \boldsymbol{\psi})K(\boldsymbol{\psi}, \boldsymbol{\phi}))L^{-T})^{-1}L^{-1}K(\boldsymbol{\phi}, \boldsymbol{\psi})\\
    =&\gamma^{-1}I-\gamma^{-2}K(\boldsymbol{\psi}, \boldsymbol{\phi})L^{-T}(I+\gamma^{-1}L^{-1}K(\boldsymbol{\phi}, \boldsymbol{\psi})K(\boldsymbol{\psi}, \boldsymbol{\phi})L^{-T})^{-1}L^{-1}K(\boldsymbol{\phi}, \boldsymbol{\psi}).
\end{split}
\end{align}
Let $A:=\gamma^{-1/2}L^{-1}K(\boldsymbol{\phi}, \boldsymbol{\psi})$. Then, \eqref{beform} gives
\begin{align}
\label{rIQpsiphi}
 (\gamma I + Q(\boldsymbol{\psi}, \boldsymbol{\psi}))^{-1}=\gamma^{-1}I- \gamma^{-1}A^T(I+AA^T)^{-1}A.
\end{align}
The identity \eqref{rIQpsiphi} is the cornerstone for the SGP method to reduce the computational complexity. We note that the dimension of the matrix needed to be inverted at the right-hand side of \eqref{rIQpsiphi} depends only on the length of $\boldsymbol{\phi}$, which is proportional to the number of inducing points. Hence, if $M<<N$, using \eqref{rIQpsiphi}, we are able to save computational time and memory storage. The numerical results in Section \ref{secNumResults} show that a small number of inducing points is sufficient for the SGP approach to achieve comparable accuracy to the GP method in \cite{chen2021solving}. 
\begin{remark}
\label{etanug}
In general, $K(\boldsymbol{\phi}, \boldsymbol{\phi})$ is ill-conditioned. Thus, to deal with the inverse of $K(\boldsymbol{\phi}, \boldsymbol{\phi})$, we perform the Cholesky decomposition on $K(\boldsymbol{\phi}, \boldsymbol{\phi})+\eta \mathcal{R}$, where $\mathcal{R}$ is a block diagonal nugget built using the approach in \cite{chen2021solving} and $\eta>0$ is a small regularization parameter. 
\end{remark}

\subsection{A Probabilistic Perspective of the SGP Method}
\label{subsecProb}
Here, we give a probabilistic interpretation for the SGP method. First, we put a Gaussian prior on the solution $u^*$ to \eqref{illpde}, i.e., let $u^*\sim\mathcal{N}(0, {K})$, where ${K}$ is {\color{black} the} kernel associated with the RKHS $\mathcal{U}$ in Subsection \ref{subsecSGP}. Let $\{\boldsymbol{x}_i\}_{i=1}^N$ be the set of sample points in Subsection \ref{subsecSGP}. Define  
\begin{align*}
\boldsymbol{u}=(u^*(\boldsymbol{x}_1),\dots, u^*(\boldsymbol{x}_N), {\color{black}(\partial_{x_1}+\partial_{x_2})u^*(\boldsymbol{x}_1),\dots,(\partial_{x_1}+\partial_{x_2})u^*(\boldsymbol{x}_{N_\Omega}),} \Delta u^*(\boldsymbol{x}_1),\dots, \Delta u^*(\boldsymbol{x}_{N_\Omega})). 
\end{align*}
Then, $\boldsymbol{u}$ is a multivariate Gaussian random variable following the distribution $\mathcal{N}(0, K(\boldsymbol{\psi}, \boldsymbol{\psi}))$. Let $\boldsymbol{z}$ be a noisy observation of $\boldsymbol{u}$, i.e., 
\begin{align*}
\boldsymbol{z} = \boldsymbol{u} + \boldsymbol{\epsilon},
\end{align*}
where $\boldsymbol{\epsilon}$ is a noise following the distribution $\mathcal{N}(0, {\gamma}I)$, which is independent with $u^*$.  Thus, $\boldsymbol{z}\sim \mathcal{N}(0, K(\boldsymbol{\psi}, \boldsymbol{\psi})+\gamma I)$ and the probability density of $\boldsymbol{z}$ is 
\begin{align*}
p(\boldsymbol{z})=\frac{1}{\sqrt{(2\pi)^{2(N_\Omega+N)}\operatorname{det}(K(\boldsymbol{\psi}, \boldsymbol{\psi})+\gamma I)}}e^{-\frac{1}{2}\boldsymbol{z}^T(K(\boldsymbol{\psi}, \boldsymbol{\psi})+\gamma I)^{-1}\boldsymbol{z}}.
\end{align*}
Then, solving \eqref{illpde} by the GP method in \cite{chen2021solving} is equivalent to finding $\boldsymbol{z}$ with {\color{black}the maximum likelihood while satisfying the PDE system.} More precisely, the GP method seeks to find $\boldsymbol{z}$ solving  
\begin{align}
\label{illoptprob}
\begin{cases}
\max\limits_{\boldsymbol{z}} \ln p(\boldsymbol{z})\\
\text{s.t. } {\color{black}{z}^{(3)}_j=z_{j}^{(1)}z_{j}^{(2)}+}f(\boldsymbol{x}_j), \forall j=1,\dots, N_\Omega,\\
\quad\quad{z}^{(1)}_j = g(\boldsymbol{x}_j), \forall j, N_{\Omega} + 1 \leq j \leq N.
\end{cases}
\end{align}
In our SGP method, we replace the log-likelihood in \eqref{illoptprob} with its lower bound. The  derivation follows  \cite{titsias2009variational}.  More precisely, let $\{\widehat{\boldsymbol{x}}_i\}_{i=1}^M$ be the inducing points  in Subsection \ref{subsecSGP} and define  $\boldsymbol{v}^{(1)}=(u(\widehat{\boldsymbol{x}}_1), \dots, u(\widehat{\boldsymbol{x}}_{M}))$, {\color{black}$\boldsymbol{v}^{(2)}=((\partial_{x_1}+\partial_{x_2})u(\widehat{\boldsymbol{x}}_1), \dots, (\partial_{x_1}+\partial_{x_2})u(\widehat{\boldsymbol{x}}_{M_\Omega}))$, and} $\boldsymbol{v}^{( {\color{black}3} )}=(\Delta u(\widehat{\boldsymbol{x}}_1), \dots, \Delta u(\widehat{\boldsymbol{x}}_{M_\Omega}))$. Let $\boldsymbol{v}=(\boldsymbol{v}^{(1)}, {\color{black}\boldsymbol{v}^{(2)}}, \boldsymbol{v}^{( {\color{black}3} )})$. Then, $\boldsymbol{v}$ and $\boldsymbol{z}$ are joint Gaussian random variables. Let $q(\boldsymbol{v})$ be the variational distribution of $\boldsymbol{v}$ and let $q(\boldsymbol{u}, \boldsymbol{v})=p(\boldsymbol{u}|\boldsymbol{v})q(\boldsymbol{v})$ be the probability density function of the variational distribution of $(\boldsymbol{u}, \boldsymbol{v})$. Then, by Jensen's inequality, we have
\begin{align}
\label{illKL}
\begin{split}
\ln p(\boldsymbol{z})=& \ln \int\int p(\boldsymbol{z}, \boldsymbol{u}, \boldsymbol{v})\dif \boldsymbol{u}\dif \boldsymbol{v} = \ln \int\int  q(\boldsymbol{u},  \boldsymbol{v})  \frac{p(\boldsymbol{z}, \boldsymbol{u},  \boldsymbol{v})}{q( \boldsymbol{u},  \boldsymbol{v})}\dif \boldsymbol{u}\dif \boldsymbol{v} \\
\geq & \int\int q(\boldsymbol{u}, \boldsymbol{v})\ln \frac{p(\boldsymbol{z}, \boldsymbol{u}, \boldsymbol{v})}{q(\boldsymbol{u}, \boldsymbol{v})}\dif \boldsymbol{u}\dif \boldsymbol{v}=\int\int  p(\boldsymbol{u}| \boldsymbol{v})q(\boldsymbol{v})\ln \frac{p(\boldsymbol{z}|\boldsymbol{u})p(\boldsymbol{v})}{q(\boldsymbol{v})}\dif \boldsymbol{u}\dif \boldsymbol{v}\\
=:& \mathcal{F}(\boldsymbol{z}, q).
\end{split}
\end{align}
Thus, $\mathcal{F}(\boldsymbol{z}, q)$ provides a lower bound for $\ln p(\boldsymbol{z})$. To get an optimal lower bound, we take the derivative of $\mathcal{F}$ w.r.t. $q$ and get
\begin{align*}
\frac{\delta \mathcal{F}(\boldsymbol{z}, q(\boldsymbol{v}))}{\delta q(\boldsymbol{v})}=\int p(\boldsymbol{u}|\boldsymbol{v})(\ln \frac{p(\boldsymbol{z}|\boldsymbol{u})p(\boldsymbol{v})}{q(\boldsymbol{v})} -1)\dif \boldsymbol{u}. 
\end{align*}
Letting $\delta \mathcal{F}(q(\boldsymbol{v}))/ \delta q(\boldsymbol{v})=0$, we get the optimal variational distribution
\begin{align}
\label{illqv}
q^*(\boldsymbol{v})=\frac{p(\boldsymbol{v})}{Z}\operatorname{exp}\bigg(\int p(\boldsymbol{u}|\boldsymbol{v})\ln p(\boldsymbol{z}|\boldsymbol{u})\dif  \boldsymbol{u}\bigg),
\end{align}
where ${Z}$ is a real number guaranteeing the unit mass of $q^*$. 
Then, plugging \eqref{illqv} into \eqref{illKL}, we get
\begin{align*}
\mathcal{F}(\boldsymbol{z}, q)=-\frac{N+N_\Omega}{2}\ln 2\pi -\frac{1}{2}\ln\operatorname{det}(Q(\boldsymbol{\psi}, \boldsymbol{\psi})+\gamma{I})-\boldsymbol{z}^T(Q(\boldsymbol{\psi}, \boldsymbol{\psi})+\gamma{I})^{-1}\boldsymbol{z}-\frac{1}{2\gamma}\operatorname{Tr}(K(\boldsymbol{\psi}, \boldsymbol{\psi})-{Q}(\boldsymbol{\psi}, \boldsymbol{\psi})).
\end{align*}
Hence, if we replace $\ln p(\boldsymbol{z})$ by $\mathcal{F}(\boldsymbol{z}, q^*)$ in \eqref{illoptprob}, we note that \eqref{regoptzot} is equivalent to 
\begin{align}
\label{illsgpprob}
\begin{cases}
\max\limits_{\boldsymbol{z}} \mathcal{F}(z, q^*)\\
\text{s.t. } {\color{black}{z}^{(3)}_j=z_{j}^{(1)}z_{j}^{(2)}+}f(\boldsymbol{x}_j), \forall j=1,\dots, N_\Omega,\\
\quad\quad{z}^{(1)}_j = g(\boldsymbol{x}_j), \forall j=N_{\Omega}+1,\dots,  N. 
\end{cases}
\end{align}
\begin{comment}
\begin{remark}
We are flexible to select the components of
$\boldsymbol{v}$. Thus, for solving general PDEs, we can choose $\boldsymbol{v}$ such that $\boldsymbol{v}$ does not contain the values of $\nabla u$. Then, the dimension of $K(\boldsymbol{\phi}, \boldsymbol{\phi})$ will not increase as $d$ grows if the number of inducing points are fixed. However, in numerical experiments, we find that the numerical results are less accurate if we ignore the values of some linear operators of $u$. Hence, in Section \ref{secNumResults}, we select the values of all linear operators evaluated at the inducing points into $\boldsymbol{v}$. 
\end{remark}
\end{comment}
\begin{remark}
\label{hyperpara}
The probabilistic interpretation of the SGP method above suggests a way for hyperparameter learning. Let $\boldsymbol{\theta}$ be the parameter vector consisting of the positions of the inducing points and the kernel's parameters. Write $q^*_{\boldsymbol{\theta}}$ to emphasize the dependence of $q^*$ in \eqref{illqv} on $\boldsymbol{\theta}$. Then, the SGP method with hyperparameter learning for solving \eqref{illpde} considers the maximization problem
\begin{align}
\label{illsgpprobhyp}
\begin{cases}
\max\limits_{\boldsymbol{z}, \boldsymbol{\theta}} \mathcal{F}(z, q^*_{\boldsymbol{\theta}})\\
\text{s.t. } {\color{black}{z}^{(3)}_j=z_{j}^{(1)}z_{j}^{(2)}+}f(\boldsymbol{x}_j), \forall j=1,\dots, N_\Omega,\\
\quad\quad{z}^{(1)}_j = g(\boldsymbol{x}_j), \forall j=N_{\Omega}+1,\dots,  N.
\end{cases}
\end{align}
However, the objective function in \eqref{illsgpprobhyp} is highly nonlinear. Thus, the computation of \eqref{illsgpprobhyp} is very costly. In the numerical experiments of Section \ref{secNumResults}, we fix the positions of inducing points. Then, we use the Gauss--Newton method to solve \eqref{illsgpprobhyp} for different values of the parameters of the kernel on the uniform grid in the domain of parameters and accept the result that achieves the largest value of the objective function in \eqref{illsgpprobhyp}. If we choose  Gaussian kernels for solving PDEs, which contain only one or two lengthscale parameters, the above grid search method for choosing hyperparameters is efficient. {\color{black}When the initial search interval is appropriately chosen, the combination of the maximum likelihood approach and grid search typically produces favorable outcomes. The optimal choice of hyperparameters depends on the underlying PDE's regularity, as a larger lengthscale tends to result in a smoother solution, while a smaller lengthscale tends to favor solutions with lower regularity. Given our understanding of the regularity of PDEs in our experiments, we search a lengthscale within the range of $0.01$ to $1$ using a step size of $0.01$.} We leave the full optimization over $\boldsymbol{z}$ and $\boldsymbol{\theta}$ in \eqref{illsgpprobhyp} to future work. 
\end{remark}
{\color{black}
\begin{remark}
The log-determinant term in $\mathcal{F}$ will not introduce extra computational costs if the Cholesky decomposition of $I+AA^T$ in \eqref{rIQpsiphi} is computed beforehand.  To see this, we first recall the Weinstein--Aronszajn identity, which states that for $n$-by-$m$ matrices $U$ and $V$,  and for an invertible $m$-by-$m$ matrix $W$, we have 
\begin{align*}
\operatorname{det}(I + UWV^T)=\operatorname{det}(W^{-1}+V^TU)\operatorname{det}(W),
\end{align*}
where $I$ is the identity matrix.  In the following presentation of this remark, for ease of presentation, we denote by $n$ and by $m$ the dimensions of $Q(\boldsymbol{\psi}, \boldsymbol{\psi})$ and $AA^T$, separately, where $A$ appears in  \eqref{rIQpsiphi}.   
Thus, using \eqref{rIQpsiphi} and the  Weinstein--Aronszajn identity, we have
\begin{align}
	\operatorname{det}\big((\gamma I + Q(\boldsymbol{\psi}, \boldsymbol{\psi}))^{-1}\big)=&\operatorname{det}\big(\gamma^{-1}I- \gamma^{-1}A^T(I+AA^T)^{-1}A\big)\nonumber\\
	=& \gamma^{-n}\operatorname{det}\big(-(I+AA^T)+AA^T\big)\operatorname{det}\big(-(I+AA^T)\big)\nonumber \\
	=& \gamma^{-n}\operatorname{det}\big(I+AA^T\big). \label{weinsteinar}
\end{align}
Hence, if the Cholesky decomposition of $I+AA^T$ is computed in advance, i.e. $I+AA^T=JJ^T$, we conclude from \eqref{weinsteinar} that
\begin{align}
\log \operatorname{det}\big(\gamma I + Q(\boldsymbol{\psi}, \boldsymbol{\psi})\big) =& - \log \operatorname{det}\big((\gamma I + Q(\boldsymbol{\psi}, \boldsymbol{\psi}))^{-1}\big)\nonumber\\
=& n \log \gamma - \log \operatorname{det}(JJ^T) = n\log \gamma - 2\log \operatorname{det}(J)\nonumber\\
=& n\log\gamma - 2\sum_{i=1}^m \log J_{ii}.  \label{logdetnocst}
\end{align}
Therefore, \eqref{logdetnocst} implies that the computational cost of the log-determinant in $\mathcal{F}$ is not high once we get the Cholesky decompositon of $I+AA^T$, which is required for solving  \eqref{regoptzot} in our current SGP method.
\end{remark}
}

\section{A General Framework for the SGP method}
\label{secGF}
In this section, we provide a framework of the SGP method for solving a general PDE. Then, we give the existence argument for a minimizer of the optimal recovery problem used to approximate the solution of the PDE. Finally, we  analyze the approximation errors. 

\subsection{A RKHS Generated by Linear Operators}
\label{subsecRKHSIP}
In this subsection, we show the construction of a RKHS generated by linear operators based on the abstract theory of RKHSs (see \cite{owhadi2019operator}). The RKHS forms a space where we seek approximations for solutions to general PDEs.

Let $\mathcal{U}$ be a Banach space, let $\mathcal{U}^*$ be its dual, and let $[\cdot, \cdot]$ be their duality paring. We assume further that there exists a covariance operator  $\mathcal{K}:\mathcal{U}^*\mapsto\mathcal{U}$ that is linear, bijective, symmetric ($[\mathcal{K}\phi, \varphi]=[\mathcal{K}\varphi, \phi]$), and positive ($[\mathcal{K}\phi, \phi]>0$ for $\phi\not=0$), and suppose that the norm of $\mathcal{U}$ is given by 
\begin{align*}
\|u\|_{\mathcal{U}}=\sqrt{[\mathcal{K}^{-1}u, u]}, \forall u \in \mathcal{U}. 
\end{align*}
Meanwhile, the inner products of $\mathcal{U}$ and $\mathcal{U}^*$ are given by 
\begin{align*}
\begin{split}
\langle u, v\rangle :=& [\mathcal{K}^{-1}u, v], \forall u, v\in \mathcal{U},\\
\langle \phi, \varphi\rangle_* :=& [\phi, \mathcal{K}\varphi], \forall \phi, \varphi\in \mathcal{U}^*.
\end{split}
\end{align*}
Then, $\mathcal{U}$ coincides with the RKHS space of the kernel $K$ defined by
\begin{align}
\label{defkernelK}
    K(\boldsymbol{x}, \boldsymbol{x'})=[\delta_{\boldsymbol{x}}, \mathcal{K}\delta_{\boldsymbol{x'}}], \forall  \boldsymbol{x}, \boldsymbol{x'}\in \overline{\Omega}, 
\end{align}
where $\delta_{\boldsymbol{x}}$ is the Dirac delta function centered at $\boldsymbol{x}$. 
Let $\xi\sim \mathcal{N}(0, \mathcal{K})$ be the \textit{canonical} GP on $\mathcal{U}$ \cite[Chapter 17.6]{owhadi2019operator} and let $[\phi, \xi]$ be the image of $\phi\in \mathcal{U}^*$ under $\xi$. Then, we have
\begin{align*}
    E[\phi, \xi]=0 \text{ and } E[\phi, \xi][\varphi, \xi] = [\phi, \mathcal{K}\varphi], \forall \phi, \varphi\in \mathcal{U}^*. 
\end{align*}
The GP method proposed in \cite{chen2021solving} approximates a solution of a nonlinear PDE by a MAP point for the GP $\xi\sim\mathcal{N}(0,\mathcal{K})$ conditioned on PDE constraints at the collocation points. To reduce the computation time, we consider a Gaussian prior with a different covariance operator generated by selected linear operators. 

Let $\boldsymbol{\phi}:=\{\phi_i\}_{i=1}^R$ be the collection of $R$ non-trivial elements of $\mathcal{U}^*$.  For the \textit{canonical} GP $\xi\sim\mathcal{N}(0, \mathcal{K})$, $[\boldsymbol{\phi}, \xi]$ is a $\mathbb{R}^R$-valued Gaussian vector and $[\boldsymbol{\phi}, \xi]\sim\mathcal{N}(0, \Theta)$, where
\begin{align}
\label{eq:defTt}
    \Theta\in \mathbb{R}^{R\times R}, \Theta_{i, n}=[\phi_i, \mathcal{K}\phi_n], \forall 1\leq i, n\leq R. 
\end{align}
Suppose that $\Theta$ is invertible. We define the induced covariance operator $\mathcal{Q}$ as
\begin{align}
\label{inducingK}
[\varphi, \mathcal{Q}\varphi] = \mathcal{K}(\varphi, \boldsymbol{\phi})\Theta^{-1}\mathcal{K}(\boldsymbol{\phi}, \varphi), \forall \varphi\in \mathcal{U}^*. 
\end{align}
The next lemma guarantees that $\mathcal{Q}$ is a covariance operator and associates with a RKHS. 
\begin{lemma}
\label{lmKIcov}
Let $\boldsymbol{\phi}:=\{\phi_i\}_{i=1}^R$ be the collection of $R$ linearly independent non-trivial elements of $\mathcal{U}^*$. Then, the operator $\mathcal{Q}$ defined in \eqref{inducingK} is linear, symmetric, and nonnegative. Hence, there exists a unique RKHS $\mathcal{U}_{{Q}}$ associated with the kernel $Q$ defined by
\begin{align}
\label{defQ}
Q(\boldsymbol{x}, \boldsymbol{x'})=[\delta_{\boldsymbol{x}}, \mathcal{Q}\delta_{\boldsymbol{x'}}], \forall \boldsymbol{x}, \boldsymbol{x'}\in \overline{\Omega}. 
\end{align}
Furthermore, $\mathcal{U}_{{Q}}\subset \mathcal{U}$ and 
\begin{align}
\label{eququ}
\|u\|_{\mathcal{U}_{{Q}}} = \|u\|_{\mathcal{U}}, \forall u\in \mathcal{U}_{\mathcal{Q}}. 
\end{align}
\end{lemma}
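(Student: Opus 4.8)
The plan is to first exhibit $\mathcal{Q}$ explicitly as an operator (since \eqref{inducingK} only specifies its quadratic form), read the three structural properties off the formula, then invoke the Moore--Aronszajn theorem, and finally identify $\mathcal{U}_{Q}$ with the finite-dimensional subspace $V:=\operatorname{span}\{\mathcal{K}\phi_i\}_{i=1}^R$ of $\mathcal{U}$ by checking that $(V,\|\cdot\|_{\mathcal{U}})$ already carries $Q$ as its reproducing kernel. The key preliminary observation is that $\Theta$ in \eqref{eq:defTt} is not merely invertible but positive definite: for $\boldsymbol{c}\neq 0$ one has $\boldsymbol{c}^T\Theta\boldsymbol{c}=[\sum_i c_i\phi_i,\ \mathcal{K}\sum_n c_n\phi_n]>0$, because the $\phi_i$ are linearly independent (so $\sum_i c_i\phi_i\neq 0$) and $\mathcal{K}$ is positive. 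Hence $\Theta^{-1}$ is symmetric and positive definite, a fact I will use repeatedly.

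First I would set $\mathcal{Q}\varphi:=\sum_{i,n}(\Theta^{-1})_{in}[\phi_n,\mathcal{K}\varphi]\,\mathcal{K}\phi_i$ and verify that its quadratic form reproduces \eqref{inducingK}. Linearity in $\varphi$ is immediate. Writing $[\psi,\mathcal{Q}\varphi]=\mathcal{K}(\psi,\boldsymbol{\phi})\Theta^{-1}\mathcal{K}(\boldsymbol{\phi},\varphi)$ and using $\Theta^{-T}=\Theta^{-1}$ together with the symmetry $[\psi,\mathcal{K}\phi_i]=[\phi_i,\mathcal{K}\psi]$ gives $[\psi,\mathcal{Q}\varphi]=[\varphi,\mathcal{Q}\psi]$, i.e.\ symmetry; and nonnegativity follows from $[\varphi,\mathcal{Q}\varphi]=\mathcal{K}(\varphi,\boldsymbol{\phi})\Theta^{-1}\mathcal{K}(\boldsymbol{\phi},\varphi)\geq 0$ since $\Theta^{-1}$ is positive definite. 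This establishes the first assertion of the lemma, and applying it to Dirac functionals shows that $Q$ in \eqref{defQ} is a symmetric positive semi-definite kernel, so the Moore--Aronszajn theorem furnishes a unique RKHS $\mathcal{U}_{Q}$.

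For the inclusion $\mathcal{U}_{Q}\subset\mathcal{U}$ and the norm identity \eqref{eququ}, I would argue that $\mathcal{U}_{Q}=V$. On the one hand, $Q(\cdot,\boldsymbol{x}')=\mathcal{Q}\delta_{\boldsymbol{x}'}\in V$ for every $\boldsymbol{x}'$, and these sections span all of $V$ (because $\Theta^{-1}$ is invertible and the functions $\mathcal{K}\phi_i$ are linearly independent, using injectivity of $\mathcal{K}$), so $\mathcal{U}_{Q}$, the closure of their span, equals the finite-dimensional $V$, which is a closed subspace of the ambient RKHS $\mathcal{U}$. Then $(V,\langle\cdot,\cdot\rangle_{\mathcal{U}})$ is a Hilbert space of functions, and I would verify the reproducing property directly: for $u=\sum_i c_i\mathcal{K}\phi_i$ one has $\mathcal{K}^{-1}u=\sum_i c_i\phi_i$, whence
\[
\langle u, Q(\cdot,\boldsymbol{x}')\rangle_{\mathcal{U}}=[\mathcal{K}^{-1}u,\mathcal{Q}\delta_{\boldsymbol{x}'}]=\boldsymbol{c}^T\Theta\,\Theta^{-1}\mathcal{K}(\boldsymbol{\phi},\delta_{\boldsymbol{x}'})=\boldsymbol{c}^T\mathcal{K}(\boldsymbol{\phi},\delta_{\boldsymbol{x}'})=u(\boldsymbol{x}').
\]
Thus $Q$ is the reproducing kernel of $(V,\|\cdot\|_{\mathcal{U}})$, and by uniqueness of the RKHS associated with $Q$ we conclude $\mathcal{U}_{Q}=V\subset\mathcal{U}$ with $\|\cdot\|_{\mathcal{U}_{Q}}=\|\cdot\|_{\mathcal{U}}$ on $V$, which is exactly \eqref{eququ}.

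The main obstacle is not any single computation but the structural subtlety that $\mathcal{Q}$ is only nonnegative, hence not injective, so one cannot read the RKHS norm off the range of $\mathcal{Q}$ by the usual formula $\|\mathcal{Q}\varphi\|_{\mathcal{U}_{Q}}^2=[\varphi,\mathcal{Q}\varphi]$ valid in the nondegenerate case. I sidestep this entirely by passing to the finite-dimensional subspace $V$ and appealing to the uniqueness of the reproducing kernel, which makes the norm identity fall out for free rather than requiring a delicate infimum computation over preimages of $\mathcal{Q}$. The only other point requiring care is the consistency of defining $\mathcal{Q}$ from a quadratic form, which the explicit formula together with the verified symmetry resolves.
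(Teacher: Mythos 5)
Your proof is correct, and for the second half of the lemma it takes a genuinely different (and slightly stronger) route than the paper. The first part coincides: both arguments deduce positive definiteness of $\Theta$ from the positivity of $\mathcal{K}$ and the linear independence of the $\phi_i$, read off symmetry and nonnegativity of $\mathcal{Q}$, and invoke Moore--Aronszajn; your version is a bit more complete in that you exhibit $\mathcal{Q}$ explicitly as an operator, whereas \eqref{inducingK} only prescribes its quadratic form and the paper dismisses linearity and symmetry as easy to see. For the inclusion $\mathcal{U}_Q\subset\mathcal{U}$ and the isometry \eqref{eququ}, the paper works with $\mathcal{U}_{Q_0}:=\operatorname{span}\{Q(\boldsymbol{x},\cdot)\}$, computes $\|u\|_{\mathcal{U}_Q}^2=\sum_{i,j}\alpha_i\alpha_jQ(\boldsymbol{x}_i,\boldsymbol{x}_j)$ for finite combinations of kernel sections, matches this with $\|u\|_{\mathcal{U}}^2$, and passes to the completion. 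You instead identify $\mathcal{U}_Q$ outright with the $R$-dimensional subspace $V=\operatorname{span}\{\mathcal{K}\phi_i\}$ by verifying the reproducing property of $Q$ on $(V,\langle\cdot,\cdot\rangle_{\mathcal{U}})$ and appealing to uniqueness of the RKHS. The underlying algebra is the same, but your packaging buys two things: it makes explicit that $\mathcal{U}_Q$ is exactly the span of the inducing base functions $\mathcal{K}\phi_i$ (a fact the paper uses informally later when asserting that SGP solutions lie in the span generated by inducing points), and it sidesteps the completion step, whose harmlessness the paper leaves implicit (it is harmless only because $\mathcal{U}_{Q_0}$ is finite-dimensional, hence already closed in $\mathcal{U}$ --- a point you make explicit). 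One minor remark: your parenthetical justification that the sections $Q(\cdot,\boldsymbol{x}')$ span all of $V$ is terser than it should be --- the needed fact is that $\boldsymbol{c}^T\mathcal{K}(\boldsymbol{\phi},\delta_{\boldsymbol{x}'})=0$ for all $\boldsymbol{x}'$ forces the function $\mathcal{K}(\sum_ic_i\phi_i)$ to vanish identically, whence $\boldsymbol{c}=0$ by bijectivity of $\mathcal{K}$ and linear independence --- but this is exactly the argument your cited ingredients supply, and in any case the spanning claim also follows automatically once the reproducing property on $V$ is verified.
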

\begin{proof}
It is easy to see that $\mathcal{Q}$ is linear and symmetric. Since $\mathcal{K}$ is positive, $\Theta$ is positive definite. Then, $[\varphi, \mathcal{Q}\varphi] = \mathcal{K}(\varphi, \boldsymbol{\phi})\Theta^{-1}\mathcal{K}(\boldsymbol{\phi}, \varphi)\geq 0, \forall \varphi\in \mathcal{U}^*$. The equality holds if and only if $\varphi$ is perpendicular to $\boldsymbol{\phi}$ w.r.t. $\mathcal{K}$.  Thus, $\mathcal{Q}$ is nonnegative. Hence, the kernel $Q$ in \eqref{defQ} is symmetric and positive semi-definite. According to the Moore--Aronszajn theorem, there exists a unique RKHS $\mathcal{U}_{{Q}}$ associated with the kernel $Q$. Next, we show the relation between $\mathcal{U}_{{Q}}$ and $\mathcal{U}$. 

Since ${Q}(\boldsymbol{x}, \cdot)\in \mathcal{U}$ for any $\boldsymbol{x}\in \overline{\Omega}$,   the linear span of $\{{Q}(\boldsymbol{x},\cdot), \boldsymbol{x}\in \overline{\Omega}\}$, denoted by $\mathcal{U}_{\mathcal{Q}_0}$, is a subset of $\mathcal{U}$. By the Moore--Aronszajn theorem, $\mathcal{U}_{{Q}}$ is the completion of $\mathcal{U}_{{Q}_0}$. Thus, $\mathcal{U}_{{Q}}\subset\mathcal{U}$.  For any $u\in \mathcal{U}_{{Q}_0}$, there exist sequences $\{\boldsymbol{x}_{i}\}_{i=1}^m$ and $\{\alpha_i\}_{i=1}^m$, $m\in \mathbb{N}$, such that
\begin{align*}
\|u\|_{\mathcal{U}_{{Q}}}^2=\sum_{i=1}^m\sum_{j=1}^m\alpha_i\alpha_jQ(\boldsymbol{x}_i, \boldsymbol{x}_j)=\sum_{i=1}^m\sum_{j=1}^m\alpha_i\alpha_j\mathcal{K}(\delta_{\boldsymbol{x}_i}, \boldsymbol{\phi})\Theta^{-1}\mathcal{K}(\boldsymbol{\phi}, \delta_{\boldsymbol{x}_j})=\|u\|_{\mathcal{U}}^2, 
\end{align*}
where the last two equities follows by \eqref{inducingK} and \eqref{defQ}. Thus, we conclude \eqref{eququ}. 
\end{proof}

\subsection{The SGP method}
In this subsection, we present a general framework of the SGP method to solve nonlinear PDEs. 
Let $\Omega\subset\mathbb{R}^d$ be a bounded open domain with the boundary $\partial \Omega$ for $d\geq 1$. Let {\color{black}$L_1,\dots, L_{D_b}\in \mathcal{L}(\mathcal{U};C(\partial\Omega))$ and $L_{D_b+1},\dots, L_D\in \mathcal{L}(\mathcal{U};C(\Omega))$} be bounded linear operators for $1\leq D_b\leq D$. We seek to find a function $u^*$ solving the nonlinear PDE
\begin{align}
\label{pdegrf}
\begin{cases}
{P}(L_{D_b+1}(u^*)(\boldsymbol{x}),\dots, L_D(u^*)(\boldsymbol{x})) = f(\boldsymbol{x}), \forall \boldsymbol{x}\in \Omega,\\
{B}(L_1(u^*)(\boldsymbol{x}),\dots, L_{D_b}(u^*)(\boldsymbol{x})) = g(\boldsymbol{x}), \forall \boldsymbol{x}\in \partial \Omega,
\end{cases}
\end{align}
where $P, B$ represent nonlinear operators, and $f$, $g$ are given data. Throughout this section, we assume that \eqref{pdegrf} admits a unique strong solution in a quadratic Banach space $\mathcal{U}$ associated with the covariance operator $\mathcal{K}$, which means that $u^*$ has enough regularity for the linear operators to be well defined pointwisely in \eqref{pdegrf}.
    
We propose to approximate the solution to \eqref{pdegrf} in a RKHS generated by linear operators associated with inducing points. To do that, we take $M$ inducing points $\{\widehat{\boldsymbol{x}}_j\}_{j=1}^M$ in $\overline{\Omega}$ such that $\{\widehat{\boldsymbol{x}}_j\}_{j=1}^{M_\Omega}\subset\Omega$ and $\{\widehat{\boldsymbol{x}}_j\}_{j=M_{\Omega}+1}^M\subset\partial\Omega$. Then, we define the functionals $\phi_j^{(i)}\in \mathcal{U}^*$ as
\begin{align*}
\phi^{(i)}_j:=\delta_{\widehat{\boldsymbol{x}}_j}\circ L_i, \text{ and } \begin{cases}
M_{\Omega}+1\leq j\leq M, \text{ if } 1\leq i\leq D_b,\\
1\leq j\leq M_\Omega, \text{ if } D_{b}+1\leq i\leq D. 
\end{cases}
\end{align*}
Let $\boldsymbol{\phi}^{(i)}$ be the vector concatenating the functionals $\phi_j^{(i)}$ for fixed $i$ and define
\begin{align}
\label{defindiline}
\boldsymbol{\phi}:=(\boldsymbol{\phi}^{(1)}, \dots, \boldsymbol{\phi}^{(D)})\in (\mathcal{U}^*)^{\otimes R}, \text{ where } R=(M-M_\Omega)D_b + M_{\Omega}(D-D_b). 
\end{align}
The next corollary gives a RKHS generated by $\boldsymbol{\phi}$. 
\begin{corollary}
\label{exitrkhs}
Let $\boldsymbol{\phi}$ be as in \eqref{defindiline}. Assume that the elements of $\boldsymbol{\phi}$ are linearly independent in $\mathcal{U}^*$. Define {\color{black}$\Theta$ as in \eqref{eq:defTt}.}
Then, there exists 
a RKHS $\mathcal{U}_{{Q}}$ associated with the kernel $Q$ defined by
\begin{align*}
Q(\boldsymbol{x}, \boldsymbol{x'})=K(x,\boldsymbol{\phi})\Theta^{-1}K(\boldsymbol{\phi}, \boldsymbol{x'}), \forall \boldsymbol{x}, \boldsymbol{x'}\in \overline{\Omega},
\end{align*}
such that $\mathcal{U}_{Q}\subset\mathcal{U}$. Denote by $\|\cdot\|_{\mathcal{U}_{Q}}$ the norm of $\mathcal{U}_{Q}$. Then, 
\begin{align*}
\|u\|_{\mathcal{U}_{{Q}}} = \|u\|_{\mathcal{U}}, \forall u\in \mathcal{U}_{{Q}}. 
\end{align*}
\end{corollary}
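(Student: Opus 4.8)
The plan is to recognize this corollary as a direct specialization of Lemma \ref{lmKIcov}, so that the bulk of the work reduces to verifying that the hypotheses of that lemma hold for the particular collection $\boldsymbol{\phi}$ from \eqref{defindiline}. First I would confirm that each component $\phi_j^{(i)}=\delta_{\widehat{\boldsymbol{x}}_j}\circ L_i$ genuinely belongs to $\mathcal{U}^*$: since $L_i\in\mathcal{L}(\mathcal{U};C(\Omega))$ or $\mathcal{L}(\mathcal{U};C(\partial\Omega))$ is bounded and the point evaluation $\delta_{\widehat{\boldsymbol{x}}_j}$ is a bounded linear functional on $C(\Omega)$ (resp. $C(\partial\Omega)$) under the supremum norm, the composition is a bounded linear functional on $\mathcal{U}$, hence an element of $\mathcal{U}^*$. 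The linear independence of these $R$ functionals is assumed in the statement, and linear independence automatically forces each functional to be non-trivial, since a vanishing element would itself furnish a nontrivial relation.

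With these checks in place, the collection $\boldsymbol{\phi}=(\phi_1,\dots,\phi_R)$ satisfies exactly the hypothesis of Lemma \ref{lmKIcov}, namely it is a family of $R$ linearly independent non-trivial elements of $\mathcal{U}^*$. The next step is to match the induced kernel: forming $\Theta$ as in \eqref{eq:defTt} gives the Gram matrix $\Theta_{i,n}=[\phi_i,\mathcal{K}\phi_n]$, which is positive definite by the positivity of $\mathcal{K}$ together with the linear independence of $\boldsymbol{\phi}$, hence invertible. The induced covariance operator $\mathcal{Q}$ from \eqref{inducingK}, upon evaluation at Diracs via \eqref{defQ}, yields the kernel
\[
Q(\boldsymbol{x},\boldsymbol{x'})=[\delta_{\boldsymbol{x}},\mathcal{Q}\delta_{\boldsymbol{x'}}]=K(\boldsymbol{x},\boldsymbol{\phi})\Theta^{-1}K(\boldsymbol{\phi},\boldsymbol{x'}),
\]
where $K(\boldsymbol{x},\boldsymbol{\phi})$ collects the entries $[\delta_{\boldsymbol{x}},\mathcal{K}\phi_n]$. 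This is precisely the kernel $Q$ appearing in the corollary's statement.

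Invoking Lemma \ref{lmKIcov} then delivers all three conclusions at once: $Q$ is symmetric and positive semi-definite, so by the Moore--Aronszajn theorem it induces a unique RKHS $\mathcal{U}_{Q}$; this space satisfies $\mathcal{U}_{Q}\subset\mathcal{U}$; and the norms agree, $\|u\|_{\mathcal{U}_{Q}}=\|u\|_{\mathcal{U}}$ for every $u\in\mathcal{U}_{Q}$. I expect no substantive obstacle, since the lemma does the heavy lifting; the only point requiring a word of care is the verification that the operator-based functionals $\delta_{\widehat{\boldsymbol{x}}_j}\circ L_i$ are bona fide elements of $\mathcal{U}^*$ and that the matrix $\Theta$ of \eqref{eq:defTt} is exactly the Gram matrix making the corollary's $Q$ coincide with the lemma's $Q$. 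Once these identifications are made, the statement is immediate.
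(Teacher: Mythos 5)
Your proposal is correct and follows exactly the paper's route: the paper's own proof is the one-line observation that the claim is a direct consequence of Lemma \ref{lmKIcov}, and your additional checks (that $\delta_{\widehat{\boldsymbol{x}}_j}\circ L_i\in\mathcal{U}^*$, that linear independence implies non-triviality, and that $\Theta$ is the Gram matrix making the two kernels coincide) are just the routine verifications the paper leaves implicit.
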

\begin{proof}
The claim is a direct result of Lemma \ref{lmKIcov}. 
\end{proof}
{\color{black} Corollary \ref{exitrkhs} implies that  $\mathcal{U}_{Q}$ is a ``condensed" subspace of $\mathcal{U}$ and can be treated as a candidate solution for Problem \ref{main_prob}.}
Next, we propose to approximate the solution $u^*$ of \eqref{pdegrf} in the space $\mathcal{U}_{Q}$. More precisely, we take $N$ samples $\overline{\boldsymbol{x}}:=\{\boldsymbol{x}_i\}_{i=1}^N$ in $\overline{\Omega}$ such that $\{\boldsymbol{x}_i\}_{i=1}^{N_\Omega}\subset \Omega$ and $\{\boldsymbol{x}_i\}_{i=N_{\Omega}+1}^{N}\subset \partial\Omega$. 
Given a regularization parameter $\gamma>0$, we approximate $u^*$ by the minimizer of the following optimization problem
\begin{align}
\label{pdeminprob}
\begin{cases}
\min\limits_{\boldsymbol{z}\in \mathbb{R}^{\overline{R}}, u\in \mathcal{U}_Q}& \sum\limits_{i=1}^{D_b}\sum\limits_{j=N_\Omega+1}^N|L_i(u)(\boldsymbol{x}_j)-z_{j}^{(i)}|^2 + \sum\limits_{i=D_b+1}^{D}\sum\limits_{j=1}^{N_\Omega}|L_i(u)(\boldsymbol{x}_j)-z_{j}^{(i)}|^2 + \gamma\|u\|_{\mathcal{U}_Q}^2\\
\operatorname{s.t.}& {P}(z_j^{(D_b+1)},\dots, z_j^{(D)}) = f(\boldsymbol{x}_j), \text{ for } j = 1,\dots, N_{\Omega},\\
&{B}(z^{(1)}_j,\dots, z^{(D_b)}_j) = g(\boldsymbol{x}_j), \text{ for } j=N_{\Omega}+1,\dots, N,
\end{cases}
\end{align}
where 
\begin{align}
\label{defRbar}
\overline{R}=(N-N_\Omega)D_b + N_{\Omega}(D-D_b)
\end{align}
and 
\begin{align*}
\boldsymbol{z}:=(z^{(1)}_{N_\Omega+1},\dots,z^{(1)}_{N}, \dots, z^{(D_b)}_{N_\Omega+1},\dots,z^{(D_b)}_{N}, z^{(D_b+1)}_{1},\dots,z^{(D_b+1)}_{N_\Omega}, \dots, z^{(D)}_{1},\dots,z^{(D)}_{N_\Omega}).
\end{align*}
For ease of presentation, we rewrite \eqref{pdeminprob} into a compact form. To do that, we define the functionals $\psi_j^{(i)}\in \mathcal{U}^*$ as
\begin{align*}
\psi^{(i)}_j:=\delta_{\boldsymbol{x}_j}\circ L_i, \text{ and } \begin{cases}
N_{\Omega}+1\leq j\leq N, \text{ if } 1\leq i\leq D_b,\\
1\leq j\leq N_\Omega, \text{ if } D_{b}+1\leq i\leq D. 
\end{cases}
\end{align*}
Meanwhile, let $\boldsymbol{\psi}^{(i)}$ be the vector consisting of $\psi_j^{(i)}$ for fixed $i$ and define
\begin{align}
\label{defpsi}
\boldsymbol{\psi}:=(\boldsymbol{\psi}^{(1)}, \dots, \boldsymbol{\psi}^{(D)})\in (\mathcal{U}^*)^{\otimes \overline{R}}.
\end{align}
Next, we define the data vector $\boldsymbol{y}\in \mathbb{R}^N$ by
\begin{align*}
y_i = \begin{cases}
f(\boldsymbol{x}_i), \text{ if } i\in \{1,\dots, N_{\Omega}\},\\
g(\boldsymbol{x}_i), \text{ if } i\in \{N_{\Omega}+1, \dots, N\}. 
\end{cases}
\end{align*}
Furthermore, we define the nonlinear map
\begin{align*}
(F(\boldsymbol{z}))_j:=\begin{cases}
{P}(z_j^{(D_b+1)},\dots, z_j^{(D)}), \text{ for } j = 1,\dots, N_{\Omega},\\
{B}(z^{(1)}_j,\dots, z^{(D_b)}_j), \text{ for } j=N_{\Omega}+1,\dots, N. 
\end{cases}
\end{align*}
Thus, we reformulate \eqref{pdeminprob} as
\begin{align}
\label{pderegprob}
\begin{cases}
\min\limits_{\boldsymbol{z}\in \mathbb{R}^{\overline{R}}, u\in \mathcal{U}_Q}& |[\boldsymbol{\psi}, u]-\boldsymbol{z}|^2+\gamma\|u\|_{\mathcal{U}_Q}^2\\
\operatorname{s.t.}& F(\boldsymbol{z})=\boldsymbol{y}. 
\end{cases}
\end{align}

{\color{black}We observe that the minimization on $u$ in \eqref{pderegprob} is over the infinite dimensional function space $\mathcal{U}_Q$.  Fortunately, we can derive a representer formula for $u$ and transform \eqref{pderegprob} into a finite dimensional minimization problem. However, we are not able to use the representer theorem in \cite[Section 17.8]{owhadi2019operator} directly, since the inducing kernel $Q$ in Corollary \ref{exitrkhs} is not positive definite. Instead,} the arguments are based on the framework in \cite{smale2005shannon}. {\color{black}The basic idea is to define a sampling operator $S_{\overline{\boldsymbol{x}}}$ (explained in more details below) such that $S_{\overline{\boldsymbol{x}}}u = [\boldsymbol{\psi}, u], \forall u\in \mathcal{U}_Q$. Then, the objective function in \eqref{pderegprob} is reformulated as $\langle u, S_{\overline{\boldsymbol{x}}}^*S_{\overline{\boldsymbol{x}}}u - S_{\overline{\boldsymbol{x}}}^*\boldsymbol{z}\rangle+\gamma \|u\|_{\mathcal{U}_Q}^2$, where $S_{\overline{\boldsymbol{x}}}^*$ is the adjoint operator of $S_{\overline{\boldsymbol{x}}}$ and will be defined later. Then, by taking a functional derivative of the objective function and setting the derivative to be zero,  we obtain $u=(S_{\overline{\boldsymbol{x}}}^*S_{\overline{\boldsymbol{x}}}+\gamma I)^{-1}S_{\overline{\boldsymbol{x}}}^*\boldsymbol{z}$ for fixed $\boldsymbol{z}$. Then, we plug $u=(S_{\overline{\boldsymbol{x}}}^*S_{\overline{\boldsymbol{x}}}+\gamma I)^{-1}S_{\overline{\boldsymbol{x}}}^*\boldsymbol{z}$ back into \eqref{pderegprob} and get a finite dimensional minimization problem over $\boldsymbol{z}$. Next, we explain rigorously the definitions and the properties of $S_{\overline{\boldsymbol{x}}}$ and its adjoint in more details. 
}

Let $\ell^2(\overline{\boldsymbol{x}})$ be the set of sequences $\boldsymbol{a}=(a_{\boldsymbol{x}})_{\boldsymbol{x}\in \overline{\boldsymbol{x}}}$ with {\color{black}$\langle \boldsymbol{a}, \boldsymbol{b}\rangle_{\ell^2(\overline{\boldsymbol{x}})}=\sum_{\boldsymbol{x}\in \overline{\boldsymbol{x}}}a_{\boldsymbol{x}}b_{\boldsymbol{x}}$} defining the inner product. 
Define the sampling operator $S_{\overline{\boldsymbol{x}}}:\mathcal{U}_{Q}\mapsto \ell^2(\overline{\boldsymbol{x}})$ by
\begin{align}
\label{defS}
S_{\overline{\boldsymbol{x}}}(u)=[\boldsymbol{\psi}, u], \forall u\in \mathcal{U}_Q.
\end{align}
One can view $S_{\overline{\boldsymbol{x}}}$ as a generalization of the sampling operator in \cite{smale2005shannon}, which only {\color{black}evaluates} a function at the sample points.  Denote by $\langle \cdot, \cdot\rangle_{\mathcal{U}_{Q}}$ the inner product of $\mathcal{U}_Q$ and by $S_{\overline{\boldsymbol{x}}}^*$ the adjoint of $S_{\overline{\boldsymbol{x}}}$.  Then, for each $c\in \ell^2(\overline{\boldsymbol{x}})$ and $u\in \mathcal{U}_Q$, we have
\begin{align*}
\langle S^*_{\overline{\boldsymbol{x}}}c, u\rangle_{\mathcal{U}_{Q}} = \langle S_{\overline{\boldsymbol{x}}}u, c\rangle_{\ell^2(\overline{\boldsymbol{x}})} = c^T[\boldsymbol{\psi}, u] = \langle c^T\mathcal{Q}\boldsymbol{\psi}, u\rangle_{\mathcal{U}_{Q}}. 
\end{align*}
Thus, we {\color{black}obtain}
\begin{align}
\label{defSxT}
S_{\overline{\boldsymbol{x}}}^*c = c^T\mathcal{Q}\boldsymbol{\psi}, \forall c\in \ell^2(\overline{\boldsymbol{x}}). 
\end{align}
The next lemma gives some basic properties of $S_{\overline{\boldsymbol{x}}}$ and $S_{\overline{\boldsymbol{x}}}^*$. For a little abuse of notations, in the rest of this section, we denote by $I$ the identity map or the identity matrix, whose meaning is easy to recognize from the context. 
\begin{lemma}
\label{sxprop}
Let $S_{\overline{\boldsymbol{x}}}$ and $S_{\overline{\boldsymbol{x}}}^*$ be as in \eqref{defS} and \eqref{defSxT}. 
Then, for any $\gamma>0$, ${S}_{\overline{\boldsymbol{x}}}^*{S}_{\overline{\boldsymbol{x}}}+\gamma I$ and ${S}_{\overline{\boldsymbol{x}}}{S}_{\overline{\boldsymbol{x}}}^*+\gamma I$ are bijective. Meanwhile, 
\begin{align}
\label{transidt}
{S}_{\overline{\boldsymbol{x}}}^*({S}_{\overline{\boldsymbol{x}}}{S}_{\overline{\boldsymbol{x}}}^*+\gamma I)^{-1} = ({S}_{\overline{\boldsymbol{x}}}^*{S}_{\overline{\boldsymbol{x}}}+\gamma I)^{-1}{S}_{\overline{\boldsymbol{x}}}^*. 
\end{align}
For any $c\in \ell^2(\overline{\boldsymbol{x}})$, we have
\begin{align}
\label{eqre0}
({S}_{\overline{\boldsymbol{x}}}^*{S}_{\overline{\boldsymbol{x}}}+\gamma I)^{-1}{S}_{\overline{\boldsymbol{x}}}^*c = (\mathcal{Q}\boldsymbol{\psi})^T(\gamma I + \mathcal{Q}(\boldsymbol{\psi}, \boldsymbol{\psi}))^{-1}c. 
\end{align}
Furthermore, 
\begin{align}
\label{bdImus}
I-S_{\overline{\boldsymbol{x}}}(S_{\overline{\boldsymbol{x}}}^*S_{\overline{\boldsymbol{x}}}+\gamma I)^{-1}S_{\overline{\boldsymbol{x}}}^*=\gamma(\mathcal{Q}(\boldsymbol{\psi}, \boldsymbol{\psi})+\gamma I)^{-1}. 
\end{align}
\end{lemma}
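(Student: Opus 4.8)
The plan is to regard $S_{\overline{\boldsymbol{x}}}$ as a bounded operator between the Hilbert spaces $\mathcal{U}_Q$ and $\ell^2(\overline{\boldsymbol{x}})$ and to reduce all four assertions to elementary operator algebra, the only structural inputs being \eqref{defS}, \eqref{defSxT}, and the identification $S_{\overline{\boldsymbol{x}}}S_{\overline{\boldsymbol{x}}}^*=\mathcal{Q}(\boldsymbol{\psi},\boldsymbol{\psi})$. First I would record that $S_{\overline{\boldsymbol{x}}}$ is bounded: each component $\psi^{(i)}_j=\delta_{\boldsymbol{x}_j}\circ L_i$ of $\boldsymbol{\psi}$ lies in $\mathcal{U}^*$, being the composition of a bounded operator $L_i$ with point evaluation on $C(\Omega)$ or $C(\partial\Omega)$, so $S_{\overline{\boldsymbol{x}}}u=[\boldsymbol{\psi},u]$ depends continuously on $u\in\mathcal{U}_Q\subset\mathcal{U}$. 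Hence $S_{\overline{\boldsymbol{x}}}^*S_{\overline{\boldsymbol{x}}}$ and $S_{\overline{\boldsymbol{x}}}S_{\overline{\boldsymbol{x}}}^*$ are bounded, self-adjoint, and nonnegative. Bijectivity of the regularized operators then follows from coercivity: for every $u\in\mathcal{U}_Q$,
\[
\langle (S_{\overline{\boldsymbol{x}}}^*S_{\overline{\boldsymbol{x}}}+\gamma I)u,u\rangle_{\mathcal{U}_Q}=\|S_{\overline{\boldsymbol{x}}}u\|_{\ell^2(\overline{\boldsymbol{x}})}^2+\gamma\|u\|_{\mathcal{U}_Q}^2\geq \gamma\|u\|_{\mathcal{U}_Q}^2,
\]
so $S_{\overline{\boldsymbol{x}}}^*S_{\overline{\boldsymbol{x}}}+\gamma I$ is bounded below and self-adjoint, hence injective with closed, dense range, and therefore bijective; the identical argument applies to $S_{\overline{\boldsymbol{x}}}S_{\overline{\boldsymbol{x}}}^*+\gamma I$ on $\ell^2(\overline{\boldsymbol{x}})$. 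One may also observe, via \eqref{inducingK} and Lemma \ref{lmKIcov}, that $\mathcal{U}_Q=\operatorname{span}\{\mathcal{K}\phi_i\}$ is finite dimensional, reducing this step to the invertibility of a positive definite matrix.

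For the transfer identity \eqref{transidt} I would use the purely algebraic relation
\[
S_{\overline{\boldsymbol{x}}}^*(S_{\overline{\boldsymbol{x}}}S_{\overline{\boldsymbol{x}}}^*+\gamma I)=S_{\overline{\boldsymbol{x}}}^*S_{\overline{\boldsymbol{x}}}S_{\overline{\boldsymbol{x}}}^*+\gamma S_{\overline{\boldsymbol{x}}}^*=(S_{\overline{\boldsymbol{x}}}^*S_{\overline{\boldsymbol{x}}}+\gamma I)S_{\overline{\boldsymbol{x}}}^*,
\]
and then multiply on the left by $(S_{\overline{\boldsymbol{x}}}^*S_{\overline{\boldsymbol{x}}}+\gamma I)^{-1}$ and on the right by $(S_{\overline{\boldsymbol{x}}}S_{\overline{\boldsymbol{x}}}^*+\gamma I)^{-1}$, both of which exist by the first step. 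The next ingredient is the identification of $S_{\overline{\boldsymbol{x}}}S_{\overline{\boldsymbol{x}}}^*$ with the Gram matrix: combining \eqref{defS} and \eqref{defSxT}, for $c\in\ell^2(\overline{\boldsymbol{x}})$ the $i$-th entry of $S_{\overline{\boldsymbol{x}}}S_{\overline{\boldsymbol{x}}}^*c$ is $[\psi_i,c^T\mathcal{Q}\boldsymbol{\psi}]=\sum_j [\psi_i,\mathcal{Q}\psi_j]c_j$, which is exactly $(\mathcal{Q}(\boldsymbol{\psi},\boldsymbol{\psi})c)_i$; hence $S_{\overline{\boldsymbol{x}}}S_{\overline{\boldsymbol{x}}}^*=\mathcal{Q}(\boldsymbol{\psi},\boldsymbol{\psi})$.

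With these in hand, \eqref{eqre0} follows by writing $(S_{\overline{\boldsymbol{x}}}^*S_{\overline{\boldsymbol{x}}}+\gamma I)^{-1}S_{\overline{\boldsymbol{x}}}^*c=S_{\overline{\boldsymbol{x}}}^*(S_{\overline{\boldsymbol{x}}}S_{\overline{\boldsymbol{x}}}^*+\gamma I)^{-1}c=S_{\overline{\boldsymbol{x}}}^*\big((\mathcal{Q}(\boldsymbol{\psi},\boldsymbol{\psi})+\gamma I)^{-1}c\big)$ via \eqref{transidt}, and then applying \eqref{defSxT} to the vector $(\mathcal{Q}(\boldsymbol{\psi},\boldsymbol{\psi})+\gamma I)^{-1}c$, which yields $(\mathcal{Q}\boldsymbol{\psi})^T(\gamma I+\mathcal{Q}(\boldsymbol{\psi},\boldsymbol{\psi}))^{-1}c$ upon using the symmetry of the Gram matrix. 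Finally, for \eqref{bdImus} I would again invoke \eqref{transidt} to get $S_{\overline{\boldsymbol{x}}}(S_{\overline{\boldsymbol{x}}}^*S_{\overline{\boldsymbol{x}}}+\gamma I)^{-1}S_{\overline{\boldsymbol{x}}}^*=S_{\overline{\boldsymbol{x}}}S_{\overline{\boldsymbol{x}}}^*(S_{\overline{\boldsymbol{x}}}S_{\overline{\boldsymbol{x}}}^*+\gamma I)^{-1}=\mathcal{Q}(\boldsymbol{\psi},\boldsymbol{\psi})(\mathcal{Q}(\boldsymbol{\psi},\boldsymbol{\psi})+\gamma I)^{-1}$, so that $I$ minus this equals $\gamma(\mathcal{Q}(\boldsymbol{\psi},\boldsymbol{\psi})+\gamma I)^{-1}$ by the scalar-style identity $I-G(G+\gamma I)^{-1}=\gamma(G+\gamma I)^{-1}$ with $G=\mathcal{Q}(\boldsymbol{\psi},\boldsymbol{\psi})$. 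The only genuinely non-cosmetic points are the bijectivity of $S_{\overline{\boldsymbol{x}}}^*S_{\overline{\boldsymbol{x}}}+\gamma I$ on the (formally) infinite-dimensional $\mathcal{U}_Q$, resolved either by the coercivity/closed-range argument or by finite dimensionality, and the careful extraction of the Gram-matrix identity from the adjoint formula \eqref{defSxT}; everything downstream is formal algebra, valid once the relevant inverses are known to exist.
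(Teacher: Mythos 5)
Your proof is correct, and it takes a somewhat different route from the paper's. The paper establishes bijectivity of $S_{\overline{\boldsymbol{x}}}^*S_{\overline{\boldsymbol{x}}}+\gamma I$ constructively: it applies $\boldsymbol{\psi}$ to the equation $S_{\overline{\boldsymbol{x}}}^*S_{\overline{\boldsymbol{x}}}u+\gamma u=v$, solves the resulting finite-dimensional linear system for $[\boldsymbol{\psi},u]$, and extracts the explicit Woodbury-type resolvent formula $({S}_{\overline{\boldsymbol{x}}}^*{S}_{\overline{\boldsymbol{x}}}+\gamma I)^{-1}v = \frac{1}{\gamma}v -\frac{1}{\gamma}[\boldsymbol{\psi}, v]^T(\gamma I + \mathcal{Q}(\boldsymbol{\psi}, \boldsymbol{\psi}))^{-1}\mathcal{Q}\boldsymbol{\psi}$; it then verifies \eqref{eqre0} and \eqref{bdImus} by direct substitution into this formula, using the telescoping trick $\mathcal{Q}(\boldsymbol{\psi},\boldsymbol{\psi})=(\mathcal{Q}(\boldsymbol{\psi},\boldsymbol{\psi})+\gamma I)-\gamma I$ in each case. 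You instead get bijectivity abstractly from coercivity and self-adjointness (or finite dimensionality of $\mathcal{U}_Q$), isolate the single structural identity $S_{\overline{\boldsymbol{x}}}S_{\overline{\boldsymbol{x}}}^*=\mathcal{Q}(\boldsymbol{\psi},\boldsymbol{\psi})$ as a matrix fact, and then derive both \eqref{eqre0} and \eqref{bdImus} by routing everything through the transfer identity \eqref{transidt} and the scalar-style resolvent identity $I-G(G+\gamma I)^{-1}=\gamma(G+\gamma I)^{-1}$. The two proofs of \eqref{transidt} itself are identical. What your organization buys is economy: once $S_{\overline{\boldsymbol{x}}}S_{\overline{\boldsymbol{x}}}^*$ is identified with the Gram matrix, the remaining identities are pure operator algebra and the telescoping computations disappear. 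What the paper's route buys is the explicit inverse formula \eqref{uformula}, which it reuses verbatim in the proof of Lemma \ref{lmsopt} and implicitly underlies the numerically stable inversion in Subsection \ref{subsecComGram}. Your boundedness remark for $S_{\overline{\boldsymbol{x}}}$ (each $\psi_j\in\mathcal{U}^*$) and the closed-range argument are both sound, so there is no gap.
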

{\color{black}
We refer the reader to Appendix \ref{profMt} for the proof of Lemma \ref{sxprop}. We note that \eqref{eqre0} and \eqref{bdImus} establishes the relations between the operations of sampling operators $S_{\overline{\boldsymbol{x}}}$ and the covariance matrix $\mathcal{Q}(\boldsymbol{\psi}, \boldsymbol{\psi})$. With the properties of the sampling operator at hand, we are able to prove the existence of minimizers to \eqref{pderegprob}. The following theorem is similar to Proposition 2.3 in \cite{chen2021solving}. A key difference is that the arguments of Proposition 2.3 in \cite{chen2021solving} rely on the representer theorem \cite[Section 17.8]{owhadi2019operator}, which requires the covariance operator to be positive definite, but our proof here holds for general positive semi-definite covariance operators. }
\begin{theorem}
\label{thmexist}
Let $S_{\overline{\boldsymbol{x}}}$ and $S_{\overline{\boldsymbol{x}}}^*$ be defined in \eqref{defS} and \eqref{defSxT}. {\color{black}Let $\overline{R}$ be as in \eqref{defRbar}.} Given $\gamma>0$, 
the system \eqref{pderegprob} admits a solution 
\begin{align}
\label{uoptda}
u^\dagger=(S_{\overline{\boldsymbol{x}}}^*S_{\overline{\boldsymbol{x}}}+\gamma I)^{-1}S_{\overline{\boldsymbol{x}}}^*\boldsymbol{z}^\dagger,  
\end{align}
where $\boldsymbol{z}^\dagger$ is a solution to 
\begin{align}
\label{zsys}
\begin{split}
\begin{cases}
\min\limits_{\boldsymbol{z}\in \mathbb{R}^{\overline{R}}}& \boldsymbol{z}^T(\mathcal{Q}(\boldsymbol{\psi}, \boldsymbol{\psi})+\gamma I)^{-1}\boldsymbol{z}\\
\operatorname{s.t.}& F(\boldsymbol{z})=\boldsymbol{y}.
\end{cases}
\end{split}
\end{align}
\end{theorem}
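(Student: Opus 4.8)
The plan is to eliminate the infinite-dimensional variable $u$ from \eqref{pderegprob} by solving the inner minimization in closed form, thereby reducing the whole problem to the finite-dimensional problem \eqref{zsys} over $\boldsymbol{z}$. First I would fix $\boldsymbol{z}$ and consider the functional $J_{\boldsymbol{z}}(u):=|[\boldsymbol{\psi}, u]-\boldsymbol{z}|^2+\gamma\|u\|_{\mathcal{U}_Q}^2=\|S_{\overline{\boldsymbol{x}}}u-\boldsymbol{z}\|_{\ell^2(\overline{\boldsymbol{x}})}^2+\gamma\|u\|_{\mathcal{U}_Q}^2$. Since $\gamma>0$ and $S_{\overline{\boldsymbol{x}}}$ has finite rank (hence is bounded), $J_{\boldsymbol{z}}$ is strongly convex and coercive on the Hilbert space $\mathcal{U}_Q$, so it admits a unique minimizer characterized by the vanishing of its Fr\'echet derivative, namely $(S_{\overline{\boldsymbol{x}}}^*S_{\overline{\boldsymbol{x}}}+\gamma I)u=S_{\overline{\boldsymbol{x}}}^*\boldsymbol{z}$. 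By the bijectivity of $S_{\overline{\boldsymbol{x}}}^*S_{\overline{\boldsymbol{x}}}+\gamma I$ from Lemma \ref{sxprop}, this minimizer is $u_{\boldsymbol{z}}=(S_{\overline{\boldsymbol{x}}}^*S_{\overline{\boldsymbol{x}}}+\gamma I)^{-1}S_{\overline{\boldsymbol{x}}}^*\boldsymbol{z}$, which is exactly the form \eqref{uoptda}.

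Next I would substitute $u_{\boldsymbol{z}}$ back and show that the optimal value collapses to $\gamma\boldsymbol{z}^T(\mathcal{Q}(\boldsymbol{\psi},\boldsymbol{\psi})+\gamma I)^{-1}\boldsymbol{z}$. Using the commutation identity \eqref{transidt}, rewrite $u_{\boldsymbol{z}}=S_{\overline{\boldsymbol{x}}}^*(S_{\overline{\boldsymbol{x}}}S_{\overline{\boldsymbol{x}}}^*+\gamma I)^{-1}\boldsymbol{z}$, and note that $S_{\overline{\boldsymbol{x}}}S_{\overline{\boldsymbol{x}}}^*=\mathcal{Q}(\boldsymbol{\psi},\boldsymbol{\psi})$, which is immediate from \eqref{defS} and \eqref{defSxT}. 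The residual is handled by \eqref{bdImus}, giving $\boldsymbol{z}-S_{\overline{\boldsymbol{x}}}u_{\boldsymbol{z}}=\gamma(\mathcal{Q}(\boldsymbol{\psi},\boldsymbol{\psi})+\gamma I)^{-1}\boldsymbol{z}$ and hence a data-fit cost $\gamma^2\boldsymbol{z}^T(\mathcal{Q}(\boldsymbol{\psi},\boldsymbol{\psi})+\gamma I)^{-2}\boldsymbol{z}$; writing $u_{\boldsymbol{z}}=S_{\overline{\boldsymbol{x}}}^*\boldsymbol{w}$ with $\boldsymbol{w}=(\mathcal{Q}(\boldsymbol{\psi},\boldsymbol{\psi})+\gamma I)^{-1}\boldsymbol{z}$ and using $\|S_{\overline{\boldsymbol{x}}}^*\boldsymbol{w}\|_{\mathcal{U}_Q}^2=\boldsymbol{w}^T\mathcal{Q}(\boldsymbol{\psi},\boldsymbol{\psi})\boldsymbol{w}$, the regularization term evaluates to $\gamma\boldsymbol{z}^T(\mathcal{Q}(\boldsymbol{\psi},\boldsymbol{\psi})+\gamma I)^{-1}\mathcal{Q}(\boldsymbol{\psi},\boldsymbol{\psi})(\mathcal{Q}(\boldsymbol{\psi},\boldsymbol{\psi})+\gamma I)^{-1}\boldsymbol{z}$. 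Adding the two contributions and using the algebraic collapse $\gamma^2 I+\gamma\mathcal{Q}(\boldsymbol{\psi},\boldsymbol{\psi})=\gamma(\mathcal{Q}(\boldsymbol{\psi},\boldsymbol{\psi})+\gamma I)$ telescopes away one factor of $(\mathcal{Q}(\boldsymbol{\psi},\boldsymbol{\psi})+\gamma I)^{-1}$, leaving precisely $\gamma\boldsymbol{z}^T(\mathcal{Q}(\boldsymbol{\psi},\boldsymbol{\psi})+\gamma I)^{-1}\boldsymbol{z}$. Dropping the positive constant $\gamma$ shows that \eqref{pderegprob} is equivalent to \eqref{zsys}.

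Finally I would establish existence of a minimizer $\boldsymbol{z}^\dagger$ of \eqref{zsys} by the direct method. The feasible set $\{\boldsymbol{z}:F(\boldsymbol{z})=\boldsymbol{y}\}$ is nonempty, since the assumed strong solution $u^*$ yields the feasible point $[\boldsymbol{\psi},u^*]$: its components $L_i(u^*)(\boldsymbol{x}_j)$ satisfy the PDE at the sample points, so $F([\boldsymbol{\psi},u^*])=\boldsymbol{y}$; and the set is closed provided $P$, $B$ (hence $F$) are continuous. The objective $\boldsymbol{z}\mapsto\boldsymbol{z}^T(\mathcal{Q}(\boldsymbol{\psi},\boldsymbol{\psi})+\gamma I)^{-1}\boldsymbol{z}$ is continuous and coercive, because $\mathcal{Q}(\boldsymbol{\psi},\boldsymbol{\psi})+\gamma I$ is positive definite and its inverse is bounded below by $(\|\mathcal{Q}(\boldsymbol{\psi},\boldsymbol{\psi})\|+\gamma)^{-1}I$. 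Hence a minimizing sequence is bounded, admits a convergent subsequence, and its limit is feasible by closedness, producing $\boldsymbol{z}^\dagger$. Pairing $\boldsymbol{z}^\dagger$ with $u^\dagger$ from \eqref{uoptda} then gives a minimizer of \eqref{pderegprob}.

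The main obstacle is the value-function computation in the second step, which is exactly where the positive semi-definiteness of $Q$ must be respected: since $\mathcal{Q}(\boldsymbol{\psi},\boldsymbol{\psi})$ may be singular it can never be inverted on its own, and every manipulation must route through the regularized, invertible combination $\mathcal{Q}(\boldsymbol{\psi},\boldsymbol{\psi})+\gamma I$. Keeping the operator bookkeeping in the infinite-dimensional space $\mathcal{U}_Q$ consistent—rather than invoking a representer theorem as in \cite{chen2021solving}, which would require positive definiteness—is the delicate point, but the identities \eqref{transidt}--\eqref{bdImus} of Lemma \ref{sxprop} are tailored precisely to carry it through.
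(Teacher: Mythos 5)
Your proposal is correct and follows essentially the same route as the paper: eliminate $u$ for fixed $\boldsymbol{z}$ via the first-order condition to obtain $u_{\boldsymbol{z}}=(S_{\overline{\boldsymbol{x}}}^*S_{\overline{\boldsymbol{x}}}+\gamma I)^{-1}S_{\overline{\boldsymbol{x}}}^*\boldsymbol{z}$, use the identities \eqref{transidt} and \eqref{bdImus} of Lemma \ref{sxprop} to collapse the value function to $\gamma\,\boldsymbol{z}^T(\mathcal{Q}(\boldsymbol{\psi},\boldsymbol{\psi})+\gamma I)^{-1}\boldsymbol{z}$, and then obtain $\boldsymbol{z}^\dagger$ by a compactness argument. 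The only differences are presentational: you split the value computation into separate data-fit and regularization pieces where the paper does one chain of inner-product manipulations, and you spell out the direct-method existence argument (feasibility of $[\boldsymbol{\psi},u^*]$, coercivity, closedness) that the paper delegates to Theorem 1.1 of \cite{chen2021solving}.
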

\begin{proof}
In \eqref{pderegprob}, given $\boldsymbol{z}\in \mathbb{R}^{\overline{R}}$, we first consider the minimization problem over $u$
\begin{align}
\label{1lvmin}
\min_{u\in \mathcal{U}_{Q}}|[\boldsymbol{\psi}, u]-\boldsymbol{z}|^2+\gamma \|u\|^2_{\mathcal{U}_{Q}}. 
\end{align}
By the definition of ${S}_{\overline{\boldsymbol{x}}}$ in \eqref{defS}, we get \begin{align}
\label{objfu1lv}
\begin{split}
|[\boldsymbol{\psi}, u]-\boldsymbol{z}|^2+\gamma \|u\|^2_{\mathcal{U}_{Q}}=&\langle {S}_{\overline{\boldsymbol{x}}}u - \boldsymbol{z}, {S}_{\overline{\boldsymbol{x}}}u - \boldsymbol{z}\rangle_{\ell^2(\overline{\boldsymbol{x}})} + \gamma \langle u, u\rangle_{\mathcal{U}_{Q}}\\
=& \langle {S}_{\overline{\boldsymbol{x}}}^*{S}_{\overline{\boldsymbol{x}}}u-2{S}_{\overline{\boldsymbol{x}}}^*\boldsymbol{z}+\gamma u, u\rangle_{\mathcal{U}_{Q}}+|\boldsymbol{z}|^2. 
\end{split}
\end{align}
Taking the function derivative w.r.t. $u$, we obtain from \eqref{objfu1lv} that
\begin{align*}
    {S}_{\overline{\boldsymbol{x}}}^*{S}_{\overline{\boldsymbol{x}}}u-{S}_{\overline{\boldsymbol{x}}}^*\boldsymbol{z}+\gamma u = 0,
\end{align*}
which implies that the minimizer $u^\dagger$ of \eqref{1lvmin} satisfies
\begin{align}
\label{uoptform}
    u^\dagger = ({S}_{\overline{\boldsymbol{x}}}^*{S}_{\overline{\boldsymbol{x}}}+\gamma I)^{-1}{S}_{\overline{\boldsymbol{x}}}^*\boldsymbol{z}. 
\end{align}
Thus, using \eqref{objfu1lv} and \eqref{uoptform}, we have
\begin{align}
\label{tgtfncnm}
\begin{split}
|[\boldsymbol{\psi}, u^\dagger]-\boldsymbol{z}|^2+\gamma \|u^\dagger\|^2_{\mathcal{U}_{Q}}
=&\langle ({S}_{\overline{\boldsymbol{x}}}^*{S}_{\overline{\boldsymbol{x}}}-2({S}_{\overline{\boldsymbol{x}}}^*{S}_{\overline{\boldsymbol{x}}}+\gamma I)+\gamma I)({S}_{\overline{\boldsymbol{x}}}^*{S}_{\overline{\boldsymbol{x}}}+\gamma I)^{-1}{S}_{\overline{\boldsymbol{x}}}^*\boldsymbol{z}, u^\dagger\rangle_{\mathcal{U}_{Q}} + |\boldsymbol{z}|^2\\
=&-\langle {S}_{\overline{\boldsymbol{x}}}^*\boldsymbol{z}, u^\dagger\rangle_{\mathcal{U}_{Q}} + |\boldsymbol{z}|^2= - \langle \boldsymbol{z}, {S}_{\overline{\boldsymbol{x}}}u^\dagger\rangle_{\ell^2(\overline{\boldsymbol{x}})} + |\boldsymbol{z}|^2\\
=& \langle (I-{S}_{\overline{\boldsymbol{x}}}({S}_{\overline{\boldsymbol{x}}}^*{S}_{\overline{\boldsymbol{x}}}+\gamma I)^{-1}{S}_{\overline{\boldsymbol{x}}}^*)\boldsymbol{z}, \boldsymbol{z}\rangle_{\ell^2(\overline{\boldsymbol{x}})}=\gamma\boldsymbol{z}^T(\gamma I + \mathcal{Q}(\boldsymbol{\psi}, \boldsymbol{\psi}))^{-1}\boldsymbol{z},
\end{split}
\end{align}
where the last equality results from \eqref{bdImus}. Thus, \eqref{tgtfncnm} implies that \eqref{pderegprob} is equivalent to \eqref{zsys}. Using a similar compactness argument as in the proof of Theorem 1.1 in \cite{chen2021solving}, one can show that \eqref{zsys} admits a minimizer $\boldsymbol{z}^\dagger$. Therefore, we conclude \eqref{uoptda} by \eqref{uoptform}. 
\end{proof}
\begin{remark}
\label{dlwcsts}
Theorem \ref{thmexist} implies that in order to find the solution $u^*$ of the PDE system \eqref{pdegrf}, we need to solve \eqref{zsys}. To handle the constraints of \eqref{zsys}, we use the methods of eliminating variables or relaxation described in Subsection 3.3 of \cite{chen2021solving} when necessary. 
\end{remark}
\begin{remark}
\label{dlwinvgram}
To compute the inversion of $\mathcal{Q}(\boldsymbol{\psi}, \boldsymbol{\psi})+\gamma I$, we use the techniques introduced in Subsection \ref{subsecComGram} and Remark \ref{etanug}. 
\end{remark}
Next, we estimate the error between the approximation solution $u^\dagger$ given in \eqref{uoptda} and the strong solution $u^*$ of \eqref{pdegrf}. {\color{black}The proof appears in Appendix \ref{profMt}. }
\begin{theorem}
\label{error_analy}
Let $u^*$ be the strong solution of \eqref{pdegrf}. Given $\gamma>0$,  let $(u^\dagger, \boldsymbol{z}^\dagger)$ be a solution of \eqref{pderegprob}, where $u^\dagger$ is given in \eqref{uoptda}, and  $\boldsymbol{z}^\dagger$ solves \eqref{zsys}. 
Then, 
{\color{black}
\begin{align}
\label{bderror}
\|u^\dagger - u^*\|_{\mathcal{U}}\leq & \inf_{\psi\in \operatorname{span}(\boldsymbol{\psi})}\|u^* -  \mathcal{K}\psi\|_{\mathcal{U}}+ \gamma \sqrt{[\boldsymbol{\psi}, u^*]^T\mathcal{K}(\boldsymbol{\psi}, \boldsymbol{\psi})^{-3}[\boldsymbol{\psi}, u^*]}\nonumber\\
+&  \sqrt{3}|\boldsymbol{z}^\dagger|\sqrt{\|(\gamma I + \mathcal{Q}(\boldsymbol{\psi}, \boldsymbol{\psi}))^{-1}-(\gamma I + \mathcal{K}(\boldsymbol{\psi}, \boldsymbol{\psi}))^{-1}\|}\nonumber \\
&+\sqrt{(\boldsymbol{z}^\dagger - [\boldsymbol{\psi}, u^*])^T\mathcal{K}(\boldsymbol{\psi}, \boldsymbol{\psi})^{-1}(\boldsymbol{z}^\dagger-[\boldsymbol{\psi}, u^*])}.
\end{align}
}
\end{theorem}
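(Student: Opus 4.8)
The plan is to prove \eqref{bderror} by a triangle inequality along a chain of four functions interpolating between $u^*$ and $u^\dagger$, each consecutive difference producing one term on the right-hand side. Writing $G:=\mathcal{K}(\boldsymbol{\psi},\boldsymbol{\psi})$, $G_Q:=\mathcal{Q}(\boldsymbol{\psi},\boldsymbol{\psi})$ and $\boldsymbol{v}:=[\boldsymbol{\psi},u^*]$, I would introduce the $\mathcal{U}$-orthogonal projection $\widehat{u}:=(\mathcal{K}\boldsymbol{\psi})^T G^{-1}\boldsymbol{v}$ of $u^*$ onto $\operatorname{span}(\mathcal{K}\boldsymbol{\psi})$, followed by $u_1:=(\mathcal{K}\boldsymbol{\psi})^T(\gamma I+G)^{-1}\boldsymbol{v}$ and $u_{\mathcal{K}}^\dagger:=(\mathcal{K}\boldsymbol{\psi})^T(\gamma I+G)^{-1}\boldsymbol{z}^\dagger$, the latter being the ``full-kernel'' analogue of $u^\dagger=(\mathcal{Q}\boldsymbol{\psi})^T(\gamma I+G_Q)^{-1}\boldsymbol{z}^\dagger$ (using \eqref{eqre0}). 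Then $\|u^\dagger-u^*\|_{\mathcal{U}}\le \|u^*-\widehat{u}\|_{\mathcal{U}}+\|\widehat{u}-u_1\|_{\mathcal{U}}+\|u_1-u_{\mathcal{K}}^\dagger\|_{\mathcal{U}}+\|u_{\mathcal{K}}^\dagger-u^\dagger\|_{\mathcal{U}}$, and I would show these four summands are dominated by the four terms of \eqref{bderror}, in order.

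The first three are essentially linear-algebra estimates. Since $\widehat{u}$ is the orthogonal projection, $\|u^*-\widehat{u}\|_{\mathcal{U}}=\inf_{\psi\in\operatorname{span}(\boldsymbol{\psi})}\|u^*-\mathcal{K}\psi\|_{\mathcal{U}}$ by \cite[Thm. 12.3]{owhadi2019operator}, which is the first term. For any coefficient vector $\boldsymbol{c}$ one has $\|(\mathcal{K}\boldsymbol{\psi})^T\boldsymbol{c}\|_{\mathcal{U}}^2=\boldsymbol{c}^T G\boldsymbol{c}$; applying this to $\widehat{u}-u_1=(\mathcal{K}\boldsymbol{\psi})^T\gamma G^{-1}(\gamma I+G)^{-1}\boldsymbol{v}$ and to $u_1-u_{\mathcal{K}}^\dagger=(\mathcal{K}\boldsymbol{\psi})^T(\gamma I+G)^{-1}(\boldsymbol{v}-\boldsymbol{z}^\dagger)$, the estimates reduce to the two spectral inequalities $(\gamma I+G)^{-1}G^{-1}(\gamma I+G)^{-1}\preceq G^{-3}$ and $(\gamma I+G)^{-1}G(\gamma I+G)^{-1}\preceq G^{-1}$, both immediate by simultaneous diagonalization of $G$ and $\gamma I+G$. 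These yield the second term $\gamma\sqrt{\boldsymbol{v}^T G^{-3}\boldsymbol{v}}$ and the fourth term $\sqrt{(\boldsymbol{z}^\dagger-\boldsymbol{v})^T G^{-1}(\boldsymbol{z}^\dagger-\boldsymbol{v})}$, respectively.

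The main obstacle is the last summand $\|u_{\mathcal{K}}^\dagger-u^\dagger\|_{\mathcal{U}}$, where the kernel changes from $\mathcal{K}$ to $\mathcal{Q}$. The key computation is the cross inner product $\langle \mathcal{K}\psi_i,\mathcal{Q}\psi_j\rangle_{\mathcal{U}}=[\psi_i,\mathcal{Q}\psi_j]=(G_Q)_{ij}$ (using $\mathcal{K}^{-1}\mathcal{K}\psi_i=\psi_i$ and the definition \eqref{inducingK} of $\mathcal{Q}$), together with $\langle \mathcal{Q}\psi_i,\mathcal{Q}\psi_j\rangle_{\mathcal{U}}=(G_Q)_{ij}$ from \eqref{eququ}. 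Setting $\boldsymbol{a}:=(\gamma I+G)^{-1}\boldsymbol{z}^\dagger$ and $\boldsymbol{b}:=(\gamma I+G_Q)^{-1}\boldsymbol{z}^\dagger$, expanding the squared norm gives $\|u_{\mathcal{K}}^\dagger-u^\dagger\|_{\mathcal{U}}^2=\boldsymbol{a}^T G\boldsymbol{a}-2\boldsymbol{a}^T G_Q\boldsymbol{b}+\boldsymbol{b}^T G_Q\boldsymbol{b}$. I would then use the resolvent identities $G\boldsymbol{a}=\boldsymbol{z}^\dagger-\gamma\boldsymbol{a}$ and $G_Q\boldsymbol{b}=\boldsymbol{z}^\dagger-\gamma\boldsymbol{b}$ to telescope this to $(\boldsymbol{b}-\boldsymbol{a})^T\boldsymbol{z}^\dagger-\gamma|\boldsymbol{a}-\boldsymbol{b}|^2$, and since $\boldsymbol{b}-\boldsymbol{a}=\Delta\boldsymbol{z}^\dagger$ with $\Delta:=(\gamma I+G_Q)^{-1}-(\gamma I+G)^{-1}$, I obtain $\|u_{\mathcal{K}}^\dagger-u^\dagger\|_{\mathcal{U}}^2\le \boldsymbol{z}^{\dagger T}\Delta\boldsymbol{z}^\dagger\le \|\Delta\|\,|\boldsymbol{z}^\dagger|^2$, where $\Delta\succeq 0$ because $G_Q\preceq G$ (the Nystr\"om domination underlying Lemma \ref{lmKIcov}). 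This controls the third term of \eqref{bderror}; the stated constant $\sqrt{3}$ is then comfortably satisfied, and the telescoping in fact shows it could be taken equal to $1$.

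The delicate points I expect to verify carefully are the identification $\langle \mathcal{K}\psi_i,\mathcal{Q}\psi_j\rangle_{\mathcal{U}}=(G_Q)_{ij}$, which is exactly what makes the cross term collapse, and the positive-semidefiniteness $G-G_Q\succeq0$, i.e. $\Delta\succeq0$; both rest on the structure of $\mathcal{Q}$ in \eqref{inducingK} and the norm identity \eqref{eququ}. Invertibility of $G$ (needed for $\widehat{u}$, $u_1$, and the $G^{-1}$, $G^{-3}$ factors) follows from the assumed linear independence of $\boldsymbol{\psi}$, while $\gamma I+G_Q$ is invertible for every $\gamma>0$ even though $G_Q$ may be singular. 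Everything else is the triangle inequality together with the two elementary spectral comparisons above.
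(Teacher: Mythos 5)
Your proposal is correct and is essentially the paper's own proof: your chain $u^*\to\widehat u\to u_1\to u_{\mathcal K}^\dagger\to u^\dagger$ consists of exactly the intermediate functions $\overline u$, $\widehat u$, $v$ used in Appendix \ref{profMt} (you just write the representer formulas and Gram matrices directly instead of phrasing them through the sampling operators $S_{\overline{\boldsymbol{x}}},\widehat S_{\overline{\boldsymbol{x}}}$ of Lemmas \ref{sxprop} and \ref{lmsopt}), and the four estimates match Steps 1--4 there. Your treatment of the kernel-change term is in fact slightly sharper --- the telescoping via $G\boldsymbol{a}=\boldsymbol{z}^\dagger-\gamma\boldsymbol{a}$, $G_Q\boldsymbol{b}=\boldsymbol{z}^\dagger-\gamma\boldsymbol{b}$ yields constant $1$ where the paper's cruder term-by-term bound with $\|(\gamma I+\cdot)^{-1}\|\le 1/\gamma$ yields $\sqrt{3}$, so the stated inequality \eqref{bderror} follows a fortiori.
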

{\color{black}
The first term at the right-hand side (RHS) of \eqref{bderror} measures the ability of $\operatorname{span}(\mathcal{K}\boldsymbol{\psi})$ to approximate $u^*$. 
Since $u^*\in \mathcal{U}$, we conclude that as the samples become dense enough, $\inf_{\psi\in \operatorname{span}(\boldsymbol{\psi})}\|u^* -  \mathcal{K}\psi\|_{\mathcal{U}}$ is small. The second term at the RHS of \eqref{bderror} measures the effect of the nugget term $\gamma I$, which converges to $0$ as $\gamma\rightarrow 0$ for a fixed number of samples. The magnitudes of the last two terms of \eqref{bderror} depend on how well $\mathcal{Q}(\boldsymbol{\psi}, \boldsymbol{\psi})$ approximates $\mathcal{K}(\boldsymbol{\psi}, \boldsymbol{\psi})$.  Indeed, when $\mathcal{Q}(\boldsymbol{\psi}, \boldsymbol{\psi})=\mathcal{K}(\boldsymbol{\psi}, \boldsymbol{\psi})$, the SGP method recovers the GP algorithm in \cite{chen2021solving}. Hence, $|\boldsymbol{z}^\dagger|$ is bounded in this case and the third term at the RHS of \eqref{bderror} equals $0$. Besides, $\boldsymbol{z}^\dagger$ converges to $[\boldsymbol{\psi}, u^*]$ as the number of samples, $N$, goes to infinity and $\gamma\rightarrow 0$ by Theorem 3.3 of \cite{chen2021solving}.  Thus, when $\mathcal{Q}=\mathcal{K}$,  as the samples gets dense and $\gamma\rightarrow 0$,  $\boldsymbol{z}^\dagger \mathcal{K}(\boldsymbol{\psi}, \boldsymbol{\psi})^{-1}\boldsymbol{z}^\dagger$ and $\boldsymbol{z}^\dagger \mathcal{K}(\boldsymbol{\psi}, \boldsymbol{\psi})^{-1}[\boldsymbol{\psi}, u^*]$ converge to $\|u^*\|_{\mathcal{U}}^2$. Hence, the last term of \eqref{bderror} converges to $0$. 
}

\subsection{The Nystr\"{o}m Approximation Error}
As {\color{black}mentioned above}, the upper bound in \eqref{bderror} {\color{black}decreases} when $\mathcal{Q}(\boldsymbol{\psi}, \boldsymbol{\psi})$ approximates $\mathcal{K}(\boldsymbol{\psi}, \boldsymbol{\psi})$ well. In this subsection, we {\color{black}study} the spectral norm of $\mathcal{K}(\boldsymbol{\psi}, \boldsymbol{\psi}) - \mathcal{Q}(\boldsymbol{\psi}, \boldsymbol{\psi})$, denoted by $\|\mathcal{K}(\boldsymbol{\psi}, \boldsymbol{\psi}) - \mathcal{Q}(\boldsymbol{\psi}, \boldsymbol{\psi})\|$. 

We recall that $\mathcal{Q}(\boldsymbol{\psi}, \boldsymbol{\psi})=\mathcal{K}(\boldsymbol{\psi}, \boldsymbol{\phi})(\mathcal{K}(\boldsymbol{\phi}, \boldsymbol{\phi}))^{-1}\mathcal{K}(\boldsymbol{\phi}, \boldsymbol{\psi})$. Thus, we can view $\mathcal{Q}(\boldsymbol{\psi}, \boldsymbol{\psi})$ as the Nystr\"{o}m approximation to $\mathcal{K}(\boldsymbol{\psi}, \boldsymbol{\psi})$. Hence, we call $\|\mathcal{K}(\boldsymbol{\psi}, \boldsymbol{\psi}) - \mathcal{Q}(\boldsymbol{\psi}, \boldsymbol{\psi})\|$ the Nystr\"{o}m approximation error in the SGP method.  A typical Nystr\"{o}m approximation  approximates a symmetric positive semi-definite matrix $G\in \mathbb{R}^{n\times n}$, $n\in \mathbb{N}$, by sampling $m$ columns from $G$ and does not assume that columns of $G$ have close relations with each other. The authors in \cite{drineas2005nystrom} show that for any $m$ uniformly sampled columns, with a high probability, the approximation error is $O(n/\sqrt{m})$. Later, \cite{jin2013improved} improves the bound from $O(n/\sqrt{m})$ to $O(n/m^{1-\rho})$ for $\rho\in (0, 1/2)$ under the assumption that the eigenvalues of $G$ have a big eigengap. In our setting, since $\boldsymbol{\psi}$ has linear operators corresponding to the same sample points, the columns of $\mathcal{K}(\boldsymbol{\psi}, \boldsymbol{\psi})$ are highly correlated. Thus, we approximate $\mathcal{K}(\boldsymbol{\psi}, \boldsymbol{\psi})$ using a small number of inducing points instead of using columns from $\mathcal{K}(\boldsymbol{\psi}, \boldsymbol{\psi})$.  In general, instead of being taken from the samples, inducing points can be selected anywhere in the input space \cite{fowlkes2004spectral}. The error analysis has been thoroughly studied in \cite{zhang2008improved}, which indicates that the approximation error in the matrix Frobenious norm has an upper bound that is influenced by the quantization error defined in \cite{zhang2008improved}. Since that quantization error is the objective function in the k-means clustering \cite{gersho2012vector}, {\color{black}the} k-means algorithm is typically used for the initialization of inducing inputs. In our settings, we assume that the inducing points are sampled from the set of samples. We adapt the framework in \cite{jin2013improved} to our settings and show that the Nystr\"{o}m approximation error has an upper bound that depends only on the numbers of samples and inducing points. In other words, though the dimension of $\mathcal{K}(\boldsymbol{\psi}, \boldsymbol{\psi})$ is proportional to the product of the size of samples and the number of linear operators, the error bound is not influenced by the size of linear operators. 

{\color{black}Our arguments are an adaptation to those in  \cite{jin2013improved} to take into consideration of covariance matrices with values of differential operators. The strategy is to turn}  $\|\mathcal{K}(\boldsymbol{\psi}, \boldsymbol{\psi})-\mathcal{K}(\boldsymbol{\psi}, \boldsymbol{\phi})(\mathcal{K}(\boldsymbol{\phi}, \boldsymbol{\phi}))^{-1}\mathcal{K}(\boldsymbol{\phi}, \boldsymbol{\psi})\|$ into a functional approximation problem{\color{black}, which is easier to bound.} Define the sets
\begin{align}
	\label{defsetsns}
	\mathcal{H}_a=\operatorname{span}\{\mathcal{K}\mathcal{\boldsymbol{\phi}}\} \text{ and } \mathcal{H}_b=\{\boldsymbol{\beta^T\mathcal{K}\boldsymbol{\psi}}, |\boldsymbol{\beta}|\leq 1\}. 
\end{align}
For $h\in \mathcal{H}_b$, we define
\begin{align*}
	\mathcal{E}(h, \mathcal{H}_a)=\min_{v\in \mathcal{H}_a}\|v-h\|^2_{\mathcal{U}}.
\end{align*}
Then, $\mathcal{E}(h, \mathcal{H}_a)$ is the minimum error for approximating a function $h\in \mathcal{H}_b$ by functions in $\mathcal{H}_a$. Meanwhile, define $\mathcal{E}(\mathcal{H}_a)$ as the worst error in approximating any function $h\in \mathcal{H}_b$ by functions in $\mathcal{H}_a$, i.e.
\begin{align}
	\label{defeha}
	\mathcal{E}(\mathcal{H}_a)=\max_{h\in \mathcal{H}_b}\mathcal{E}(h, \mathcal{H}_a). 
\end{align}

The next proposition shows the equivalence of $\|\mathcal{K}(\boldsymbol{\psi}, \boldsymbol{\psi})-\mathcal{K}(\boldsymbol{\psi}, \boldsymbol{\phi})(\mathcal{K}(\boldsymbol{\phi}, \boldsymbol{\phi}))^{-1}\mathcal{K}(\boldsymbol{\phi}, \boldsymbol{\psi})\|$ and $\mathcal{E}(\mathcal{H}_a)$. 
\begin{pro}
\label{nystroapprox}
Let $\mathcal{E}(\mathcal{H}_a)$, $\boldsymbol{\psi}$, and $\boldsymbol{\phi}$ be as in \eqref{defeha}, \eqref{defpsi}, and \eqref{defindiline}, {\color{black}respectively}. 
Then, 
\begin{align}
\label{pronyser}
\|\mathcal{K}(\boldsymbol{\psi}, \boldsymbol{\psi})-\mathcal{K}(\boldsymbol{\psi}, \boldsymbol{\phi})(\mathcal{K}(\boldsymbol{\phi}, \boldsymbol{\phi}))^{-1}\mathcal{K}(\boldsymbol{\phi}, \boldsymbol{\psi})\|=\mathcal{E}(\mathcal{H}_a). 
\end{align}
\end{pro}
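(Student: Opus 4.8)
The plan is to reduce the spectral-norm identity \eqref{pronyser} to a worst-case projection problem by computing, for each fixed coefficient vector $\boldsymbol{\beta}$ with $|\boldsymbol{\beta}|\leq 1$, the orthogonal projection of $h=\boldsymbol{\beta}^T\mathcal{K}\boldsymbol{\psi}$ onto $\mathcal{H}_a=\operatorname{span}\{\mathcal{K}\boldsymbol{\phi}\}$ inside the ambient RKHS $\mathcal{U}$. The key algebraic fact I would use repeatedly is the identity $\langle \mathcal{K}\phi, \mathcal{K}\varphi\rangle_{\mathcal{U}}=[\mathcal{K}^{-1}\mathcal{K}\phi,\mathcal{K}\varphi]=[\phi,\mathcal{K}\varphi]$, so that the Gram matrix of $\{\mathcal{K}\phi_i\}$, the Gram matrix of $\{\mathcal{K}\psi_j\}$, and their cross-Gram matrix are exactly $\mathcal{K}(\boldsymbol{\phi},\boldsymbol{\phi})$, $\mathcal{K}(\boldsymbol{\psi},\boldsymbol{\psi})$, and $\mathcal{K}(\boldsymbol{\phi},\boldsymbol{\psi})$.

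First I would write a generic element of $\mathcal{H}_a$ as $v=\boldsymbol{\alpha}^T\mathcal{K}\boldsymbol{\phi}$ and impose the normal equations $\langle \mathcal{K}\phi_k, h-v\rangle_{\mathcal{U}}=0$ for all $k$. Using the identity above, these become $\mathcal{K}(\boldsymbol{\phi},\boldsymbol{\phi})\boldsymbol{\alpha}=\mathcal{K}(\boldsymbol{\phi},\boldsymbol{\psi})\boldsymbol{\beta}$, which is solvable since $\mathcal{K}(\boldsymbol{\phi},\boldsymbol{\phi})$ is invertible (the elements of $\boldsymbol{\phi}$ are assumed linearly independent). Hence the minimizer is $\boldsymbol{\alpha}^\star=(\mathcal{K}(\boldsymbol{\phi},\boldsymbol{\phi}))^{-1}\mathcal{K}(\boldsymbol{\phi},\boldsymbol{\psi})\boldsymbol{\beta}$. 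Then, using the Pythagorean relation $\|h-v^\star\|^2_{\mathcal{U}}=\|h\|^2_{\mathcal{U}}-\langle h, v^\star\rangle_{\mathcal{U}}$ (valid because the residual is orthogonal to $\mathcal{H}_a$), together with $\|h\|^2_{\mathcal{U}}=\boldsymbol{\beta}^T\mathcal{K}(\boldsymbol{\psi},\boldsymbol{\psi})\boldsymbol{\beta}$ and $\langle h, v^\star\rangle_{\mathcal{U}}=\boldsymbol{\beta}^T\mathcal{K}(\boldsymbol{\psi},\boldsymbol{\phi})\boldsymbol{\alpha}^\star$, I would obtain the quadratic form
\begin{align*}
\mathcal{E}(h,\mathcal{H}_a)=\boldsymbol{\beta}^T\big(\mathcal{K}(\boldsymbol{\psi},\boldsymbol{\psi})-\mathcal{K}(\boldsymbol{\psi},\boldsymbol{\phi})(\mathcal{K}(\boldsymbol{\phi},\boldsymbol{\phi}))^{-1}\mathcal{K}(\boldsymbol{\phi},\boldsymbol{\psi})\big)\boldsymbol{\beta}.
\end{align*}

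Writing $M:=\mathcal{K}(\boldsymbol{\psi},\boldsymbol{\psi})-\mathcal{K}(\boldsymbol{\psi},\boldsymbol{\phi})(\mathcal{K}(\boldsymbol{\phi},\boldsymbol{\phi}))^{-1}\mathcal{K}(\boldsymbol{\phi},\boldsymbol{\psi})$, the definition \eqref{defeha} then gives $\mathcal{E}(\mathcal{H}_a)=\max_{|\boldsymbol{\beta}|\leq 1}\boldsymbol{\beta}^T M\boldsymbol{\beta}$. To finish I would observe that $M$ is the Schur complement of the (positive semi-definite) joint Gram matrix of $\{\mathcal{K}\phi_i\}\cup\{\mathcal{K}\psi_j\}$ with respect to its invertible top-left block $\mathcal{K}(\boldsymbol{\phi},\boldsymbol{\phi})$, hence symmetric and positive semi-definite; consequently its spectral norm equals its largest eigenvalue, and by the Rayleigh-quotient characterization $\max_{|\boldsymbol{\beta}|\leq 1}\boldsymbol{\beta}^T M\boldsymbol{\beta}=\lambda_{\max}(M)=\|M\|$, which is exactly the left-hand side of \eqref{pronyser}.

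I do not anticipate a serious obstacle; the one point requiring care is justifying that $M$ is positive semi-definite, so that the worst-case quadratic form equals the spectral norm rather than merely being bounded by it. This already follows from the projection interpretation, since $\mathcal{E}(h,\mathcal{H}_a)\geq 0$ for every $\boldsymbol{\beta}$ forces $M$ to be positive semi-definite, and it is the same positivity structure already exploited in Lemma \ref{lmKIcov}.
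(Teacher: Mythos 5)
Your proposal is correct and follows essentially the same route as the paper: both parametrize $h=\boldsymbol{\beta}^T\mathcal{K}\boldsymbol{\psi}$ and $v=\boldsymbol{\alpha}^T\mathcal{K}\boldsymbol{\phi}$, minimize the resulting quadratic in $\boldsymbol{\alpha}$ (your normal equations are exactly the first-order condition the paper uses implicitly) to obtain the Schur-complement quadratic form in $\boldsymbol{\beta}$, and then identify the maximum over $|\boldsymbol{\beta}|\leq 1$ with the spectral norm. Your added remark that the positive semi-definiteness of the Schur complement is what legitimizes the final Rayleigh-quotient step is a point the paper leaves implicit, but it does not change the argument.
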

\begin{proof}
Let $h\in \mathcal{H}_b$ and $v\in \mathcal{H}_a$. Then, there exists $\boldsymbol{\alpha}$ and $\boldsymbol{\beta}$ such that
$v=\boldsymbol{\alpha}^T\mathcal{K}\boldsymbol{\phi}$ and $h=\boldsymbol{\beta}^T\mathcal{K}\boldsymbol{\psi}$. We have
\begin{align*}
\begin{split}
\mathcal{E}(h, \mathcal{H}_a)=&\min_{\boldsymbol{\alpha}}\boldsymbol{\alpha}^T\mathcal{K}(\boldsymbol{\phi}, \boldsymbol{\phi})\boldsymbol{\alpha}-2\boldsymbol{\alpha}^T\mathcal{K}(\boldsymbol{\phi}, \boldsymbol{\psi})\boldsymbol{\beta}+\boldsymbol{\beta}^T\mathcal{K}(\boldsymbol{\psi}, \boldsymbol{\psi})\boldsymbol{\beta}\\
=& \boldsymbol{\beta}^T(\mathcal{K}(\boldsymbol{\psi}, \boldsymbol{\psi})-\mathcal{K}(\boldsymbol{\psi}, \boldsymbol{\phi})(\mathcal{K}(\boldsymbol{\phi}, \boldsymbol{\phi}))^{-1}\mathcal{K}(\boldsymbol{\phi}, \boldsymbol{\psi}))\boldsymbol{\beta}.
\end{split}
\end{align*}
Hence, we obtain 
\begin{align*}
\mathcal{E}(\mathcal{H}_a)=&\max_{h\in \mathcal{H}_b}\mathcal{E}(h, \mathcal{H}_b)\\
=& \max_{|\boldsymbol{\beta}|\leq 1}\boldsymbol{\beta}^T(\mathcal{K}(\boldsymbol{\psi}, \boldsymbol{\psi})-\mathcal{K}(\boldsymbol{\psi}, \boldsymbol{\phi})(\mathcal{K}(\boldsymbol{\phi}, \boldsymbol{\phi}))^{-1}\mathcal{K}(\boldsymbol{\phi}, \boldsymbol{\psi}))\boldsymbol{\beta}\\
=& \|\mathcal{K}(\boldsymbol{\psi}, \boldsymbol{\psi})-\mathcal{K}(\boldsymbol{\psi}, \boldsymbol{\phi})(\mathcal{K}(\boldsymbol{\phi}, \boldsymbol{\phi}))^{-1}\mathcal{K}(\boldsymbol{\phi}, \boldsymbol{\psi})\|,
\end{align*}
which concludes \eqref{pronyser}. 
\end{proof}
{\color{black}Thus, Proposition \ref{nystroapprox} implies that estimating {\color{black}an} upper bound for $\|\mathcal{K}(\boldsymbol{\psi}, \boldsymbol{\psi})-\mathcal{K}(\boldsymbol{\psi}, \boldsymbol{\phi})(\mathcal{K}(\boldsymbol{\phi}, \boldsymbol{\phi}))^{-1}\mathcal{K}(\boldsymbol{\phi}, \boldsymbol{\psi})\|$ is equivalent to bounding $\mathcal{E}(\mathcal{H}_a)$.} To proceed with the argument, we make the following assumptions about the inducing points and the kernel associated with the RKHS $\mathcal{U}$.

{\color{black}The first assumption assumes that the inducing points are uniformly sampled from the collection of sample points.}
\begin{hyp}
	\label{hypunisp}
	Let  $\overline{\boldsymbol{x}}:=(\overline{\boldsymbol{x}}_1, \overline{\boldsymbol{x}}_2)$, where $\overline{\boldsymbol{x}}_1$ is the set of interior sample points in $\Omega$ and $\overline{\boldsymbol{x}}_2$ is the collection of boundary samples, and let $\widehat{\boldsymbol{x}}:=(\widehat{\boldsymbol{x}}_1, \widehat{\boldsymbol{x}}_2)$ be the set of inducing points. Assume that  $\widehat{\boldsymbol{x}}_1$ and $\widehat{\boldsymbol{x}}_2$ are uniformly sampled from $\overline{\boldsymbol{x}}_1$ and  $\overline{\boldsymbol{x}}_2$, separately. 
\end{hyp}
{\color{black}The next assumption supposes that the covariance operator $\mathcal{K}$ admits enough regularity.}
\begin{hyp}
	\label{hypliopt}
	Let $\boldsymbol{\psi}$ be as in \eqref{defpsi}. {\color{black}Let $\boldsymbol{\psi}^{\boldsymbol{x}}{\subset}(\delta_{\boldsymbol{x}}\circ L_{1}, \dots, \delta_{\boldsymbol{x}}\circ L_{D})$ be} the vector of linear operators in $\boldsymbol{\psi}$ that is  associated with the sample point at $\boldsymbol{x}\in \overline{\boldsymbol{x}}$.   Assume that there exists a constant $C>0$ such that {\color{black} 
	\begin{align*}
		\begin{split}
\operatorname{trace}(\mathcal{K}(\boldsymbol{\psi}^{\boldsymbol{x}}, \boldsymbol{\psi}^{\boldsymbol{x}})) \leq C.
		\end{split}
	\end{align*}
 }
\end{hyp}
\begin{remark}
	We note that if the kernel $K$ in \eqref{defkernelK} generated by $\mathcal{K}$ is smooth enough and the domain $\overline{\Omega}$ is compact,  Assumption \ref{hypliopt} holds. 
\end{remark}
{\color{black}
We provide an upper bound for $\mathcal{E}(\mathcal{H}_a)$ using arguments based on the eigenvalue decomposition. To do this, we define some notation. We denote $\boldsymbol{\psi}:=(\boldsymbol{\psi}_\Omega, \boldsymbol{\psi}_{\partial \Omega})$, where $\boldsymbol{\psi}_\Omega$ represents linear operators for interior points, and $\boldsymbol{\psi}_{\partial \Omega}$ contains linear operators for boundary samples. We recall that $\overline{\boldsymbol{x}}:=(\overline{\boldsymbol{x}}_1, \overline{\boldsymbol{x}}_2)$ contains all sample points, with $\overline{\boldsymbol{x}}_1$ being the set of interior points in $\Omega$ and $\overline{\boldsymbol{x}}_2$ representing the collection of boundary points. We define $N_\Omega$ as the size of $\overline{\boldsymbol{x}}_1$ and $N_{\partial \Omega}$ as the length of $\overline{\boldsymbol{x}}_2$. We also define $R_1$ and $R_2$ as $R_1=N_\Omega(D-D_b)$ and $R_2=(N-N_\Omega)D_b$, respectively, where $D$ and $D_b$ are given in \eqref{pdegrf}. We use $\{\lambda_j\}_{j=1}^{R_1}$ and $\{\tau_j\}_{j=1}^{R_2}$ to represent the set of eigenvalues of $\mathcal{K}(\boldsymbol{\psi}_\Omega, \boldsymbol{\psi}_\Omega)$ and $\mathcal{K}(\boldsymbol{\psi}_{\partial \Omega}, \boldsymbol{\psi}_{\partial \Omega})$, respectively, sorted in descending order. We denote $[i]$ as the set of all non-negative integers less than or equal to $i$. Using these notations, we establish an upper bound for $\mathcal{E}(\mathcal{H}_a)$ in the following theorem. The proof is presented in Appendix \ref{prfNSE}.}  
\begin{theorem}
	\label{corobdls}
	Suppose that Assumptions \ref{hypunisp} and \ref{hypliopt} hold. Let $\mathcal{E}(\mathcal{H}_a)$ be as in \eqref{defeha}. With a probability at least $1-\delta$, $\delta\in (0, 1)$, for any $\boldsymbol{r}:=(r_1, r_2)\in [R_1]\times [R_2]$, {\color{black}there exists a constant $C$ such that} 
	\begin{align}
		\label{bdE}
		\mathcal{E}(\mathcal{H}_a) 
		\leq & \max\bigg\{ \frac{C N_{\Omega}^2\ln^2(2/\delta)}{\lambda_{r_1}M_\Omega}, \frac{CN_{\partial\Omega}^2\ln^2(2/\delta)}{\tau_{r_2}M_{\partial\Omega}} \bigg\}+4\lambda_{r_1+1}+4\tau_{r_2+1},
	\end{align}
	with the convention that $\lambda_{R_1+1}=0$ and $\tau_{R_2+1}=0$.  
\end{theorem}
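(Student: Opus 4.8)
The plan is to deduce Theorem \ref{corobdls} from Proposition \ref{nystroapprox} by recasting the Nystr\"om error as a worst-case function-approximation error, and then to adapt the spectral argument of \cite{jin2013improved} to the present setting, where columns come in groups indexed by sample points (each point carrying the values of several linear operators). First I would invoke Proposition \ref{nystroapprox} to write $\mathcal{E}(\mathcal{H}_a)=\max_{|\boldsymbol{\beta}|\le 1}\|(I-P)\boldsymbol{\beta}^T\mathcal{K}\boldsymbol{\psi}\|_{\mathcal{U}}^2$, where $P$ is the $\mathcal{U}$-orthogonal projection onto $\mathcal{H}_a=\operatorname{span}\{\mathcal{K}\boldsymbol{\phi}\}$; indeed, $\mathcal{K}(\boldsymbol{\psi},\boldsymbol{\psi})-\mathcal{Q}(\boldsymbol{\psi},\boldsymbol{\psi})$ is exactly the Gram matrix of the residuals $\{(I-P)\mathcal{K}\psi_i\}$, so its spectral norm equals the squared operator norm of $\boldsymbol{\beta}\mapsto (I-P)\boldsymbol{\beta}^T\mathcal{K}\boldsymbol{\psi}$. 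Splitting $\boldsymbol{\psi}=(\boldsymbol{\psi}_\Omega,\boldsymbol{\psi}_{\partial\Omega})$ and $\boldsymbol{\phi}=(\boldsymbol{\phi}_\Omega,\boldsymbol{\phi}_{\partial\Omega})$ into interior and boundary parts, and using that $\operatorname{span}\{\mathcal{K}\boldsymbol{\phi}_\Omega\}$ and $\operatorname{span}\{\mathcal{K}\boldsymbol{\phi}_{\partial\Omega}\}$ are both subspaces of $\mathcal{H}_a$, I would reduce the estimate to two independent Nystr\"om problems, one on $\mathcal{K}(\boldsymbol{\psi}_\Omega,\boldsymbol{\psi}_\Omega)$ and one on $\mathcal{K}(\boldsymbol{\psi}_{\partial\Omega},\boldsymbol{\psi}_{\partial\Omega})$.

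For each block (say the interior one) I would orthonormalize via the eigendecomposition $\mathcal{K}(\boldsymbol{\psi}_\Omega,\boldsymbol{\psi}_\Omega)=\sum_{k=1}^{R_1}\lambda_k \boldsymbol{u}_k\boldsymbol{u}_k^T$ and set $e_k=\lambda_k^{-1/2}\boldsymbol{u}_k^T\mathcal{K}\boldsymbol{\psi}_\Omega$, which form an orthonormal system in $\mathcal{U}$ with $\mathcal{K}\psi_{\Omega,i}=\sum_k\sqrt{\lambda_k}(\boldsymbol{u}_k)_i e_k$. Fixing the truncation level $r_1$, any $h=\boldsymbol{\beta}_\Omega^T\mathcal{K}\boldsymbol{\psi}_\Omega$ with $|\boldsymbol{\beta}_\Omega|\le 1$ splits into a head supported on $e_1,\dots,e_{r_1}$ and a tail whose squared $\mathcal{U}$-norm is at most $\lambda_{r_1+1}$; approximating only the head and discarding the tail already produces the deterministic contributions $4\lambda_{r_1+1}$ and $4\tau_{r_2+1}$ in \eqref{bdE}, the constant $4$ absorbing the cross terms created when the head itself is approximated inexactly. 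The head is then approximated by elements of $\operatorname{span}\{\mathcal{K}\boldsymbol{\phi}_\Omega\}$, so the remaining task is purely to show that the uniformly chosen inducing points capture the top-$r_1$ eigenspace well.

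The heart of the proof is the concentration step. Under Assumption \ref{hypunisp} the $M_\Omega$ interior inducing points are sampled uniformly from the $N_\Omega$ interior samples, so the Gram information of the sampled block is an unbiased rescaled sum of $M_\Omega$ independent point contributions $\mathcal{K}(\boldsymbol{\psi}^{\boldsymbol{x}}_\Omega,\boldsymbol{\psi}^{\boldsymbol{x}}_\Omega)$. Assumption \ref{hypliopt} bounds $\operatorname{trace}(\mathcal{K}(\boldsymbol{\psi}^{\boldsymbol{x}},\boldsymbol{\psi}^{\boldsymbol{x}}))$ and hence the operator norm of each contribution, which is exactly the boundedness hypothesis needed for a matrix Bernstein/Chernoff inequality. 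Applying that inequality I would show that, with probability at least $1-\delta/2$, the sampled interior block reproduces the action of the full block on the top-$r_1$ eigenspace up to an error controlled by the variance of the sum; dividing by $\lambda_{r_1}$, the smallest retained eigenvalue that must be inverted to pass from the sampled Gram block back to the eigenfunctions, produces the term $CN_\Omega^2\ln^2(2/\delta)/(\lambda_{r_1}M_\Omega)$, with $\ln^2(2/\delta)$ arising from the deviation bound. Running the identical argument on the boundary block with failure probability $\delta/2$ and combining by a union bound yields the global probability $1-\delta$, with the two random fluctuations entering through the maximum and the two deterministic tails through the sum, as in \eqref{bdE}.

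The main obstacle I expect is precisely this concentration step for grouped, operator-valued sampling: unlike the scalar column sampling of \cite{jin2013improved}, each sampled point contributes a whole block $\mathcal{K}(\boldsymbol{\psi}^{\boldsymbol{x}}_\Omega,\boldsymbol{\psi}^{\boldsymbol{x}}_\Omega)$, so the analysis must be phrased for operators rather than scalars, and the passage from ``the sampled block reproduces the top eigenspace'' to a bound uniform over $|\boldsymbol{\beta}|\le 1$ requires carefully inverting the restriction to the retained eigenspace, which is where the factor $1/\lambda_{r_1}$ and the eigengap enter and where the trace bound of Assumption \ref{hypliopt} is indispensable. A secondary technical point is to justify rigorously that the interior/boundary reduction costs only the additive tails $4\lambda_{r_1+1}+4\tau_{r_2+1}$ while keeping the sampling errors combined through a maximum rather than a sum.
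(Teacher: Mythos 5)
Your proposal follows essentially the same route as the paper's proof in Appendix \ref{prfNSE}: recast the Nystr\"om error via Proposition \ref{nystroapprox}, split $\mathcal{H}_b$ into a head/tail at level $\boldsymbol{r}$ to produce the additive terms $4\lambda_{r_1+1}+4\tau_{r_2+1}$, represent head elements through the empirical operators built from all samples, and control the deviation of the inducing-point operators by a concentration inequality for operator-valued sums whose boundedness comes from the trace bound in Assumption \ref{hypliopt} (the paper uses the Hilbert--Schmidt concentration result of Smale--Zhou where you invoke matrix Bernstein, and your sum over the two blocks is within a factor of the paper's max, both absorbed into $C$). The factors $N_\Omega^2/\lambda_{r_1}$ and $N_{\partial\Omega}^2/\tau_{r_2}$ arise exactly as you describe, from inverting the empirical operator on the retained eigenspace.
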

	Proposition \ref{nystroapprox} and Theorem \ref{corobdls} imply that  $\|\mathcal{K}(\boldsymbol{\psi}, \boldsymbol{\psi})-\mathcal{K}(\boldsymbol{\psi}, \boldsymbol{\phi})(\mathcal{K}(\boldsymbol{\phi}, \boldsymbol{\phi}))^{-1}\mathcal{K}(\boldsymbol{\phi}, \boldsymbol{\psi})\|$ admits an upper bound that depends only on the size of sample points and is not influenced by the number of linear operators in the PDE.

\section{Numerical Results}
\label{secNumResults}
{\color{black}In this section, we study numerically how much information the SGP method preserves for different choices of numbers of inducing points.}
We demonstrate the efficacy of the SGP method by solving a nonlinear elliptic equation in Subsection \ref{subsecEliip}, {\color{black} a mean-field game system in Subsection \ref{subsecMFG}},  Burger's equation in Subsection \ref{subsecBurgers}, and a parabolic equation in Subsection \ref{subsecParabo}. For all the experiments, we take $N$ samples in the domain in a way such that $N_\Omega$ samples are in the interior. Besides, we randomly choose $M$ points from the samples and treat them as inducing points. To show the performance of the SGP method, we first plot samples, inducing points, loss histories of the Gauss--Newton iterations, and contours of pointwise solution errors for fixed values of $N$ and $M$. Though we solve  at a collection of uniform random samples, we  
compute the solution errors by reevaluating the numerical approximation function at a {\color{black}$60\times 60$} equally spaced grid using \eqref{uoptda} and by comparing the results with the values of {\color{black}reference solutions}. Then, we record the $L^\infty$ errors of  the GP method in  \cite{chen2021solving} and the SGP algorithm for different values of $N$ and $M$.  Results
are averaged over $10$ realizations of  random sample points.

 Our implementation is based on the code\footnote{https://github.com/yifanc96/NonlinearPDEs-GPsolver.git} of  \cite{chen2021solving}, which leverages Python with the JAX package for automatic differentiation. Our experiments are conducted only on CPUs. Additional performance speedups can be obtained by considering accelerated hardware such as
Graphics Processing Units (GPU).

\subsection{A Nonlinear Elliptic Equation}
\label{subsecEliip}
In this example, we reconsider the nonlinear elliptic equation {\color{black}in Subsection \ref{subsecSGP}}. More precisely, we consider \eqref{illpde} with  $\Omega=(0, 3)^2$ and $g(\boldsymbol{x})=0$. We prescribe the solution $u$ to be $\sin(\pi x_1)\sin(\pi x_2)+4\sin(4\pi x_1)\sin(4\pi x_2)$ for $\boldsymbol{x}:=(x_1, x_2)\in \overline{\Omega}$ and compute $f$ accordingly. We use the Gaussian kernel
$K(\boldsymbol{x}, \boldsymbol{y}) =\operatorname{exp}(-\frac{|\boldsymbol{x}-\boldsymbol{y}|^2}{2\sigma^2})$ with the lengthscale parameter $\sigma=0.2$.   Given $N$ and $M$, we set $N_\Omega = 0.75 \times N$ and $M_\Omega = 0.75\times M$. In this example,  we choose  $\gamma=\eta=10^{-12}$ (see Subsection \ref{subsecComGram} and Remark \ref{etanug}). The algorithm stops once the error between two successive steps is less than $10^{-5}$.  

Figure \ref{fig:NonlinearElliptic} shows the numerical results for $N=9600$ and $M=1200$. The uniform sample points used are plotted in Figure \ref{fig:NonlinearElliptic:sample_points}, from which we randomly choose $M$ including points shown in Figure \ref{fig:NonlinearElliptic:inducing_points}.  We solve the unconstrained minimization problem in \eqref{regoptzotmi}. The convergence history of the Gauss--Newton iteration in {\color{black}Figure} \ref{fig:NonlinearElliptic:loss_hist}  shows that the SGP method converges in $5$ iterations. In Figure \ref{fig:NonlinearElliptic:errors}, we plot the contour of pointwise errors between the numerical approximation and the true solution on a {\color{black}$60\times 60$} equally spaced grid.

\begin{figure}[!hbtbp]
	\centering          
		\begin{subfigure}[b]{0.3\textwidth}
			\includegraphics[width=\textwidth]{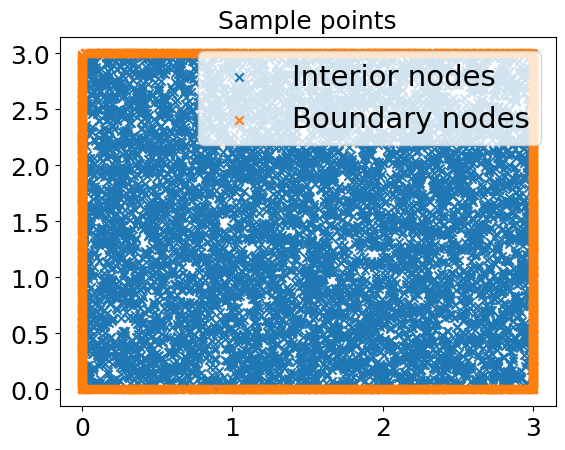}
			\caption{Sample points.}
			\label{fig:NonlinearElliptic:sample_points}
		\end{subfigure} 
		\begin{subfigure}[b]{0.3\textwidth}
			\includegraphics[width=\textwidth]{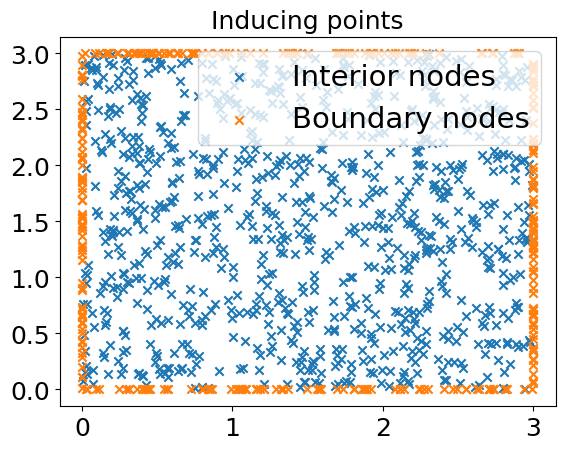}
			\caption{Inducing points.}
			\label{fig:NonlinearElliptic:inducing_points}
		\end{subfigure} \\
		\begin{subfigure}[b]{0.3\textwidth}
			\includegraphics[width=\textwidth]{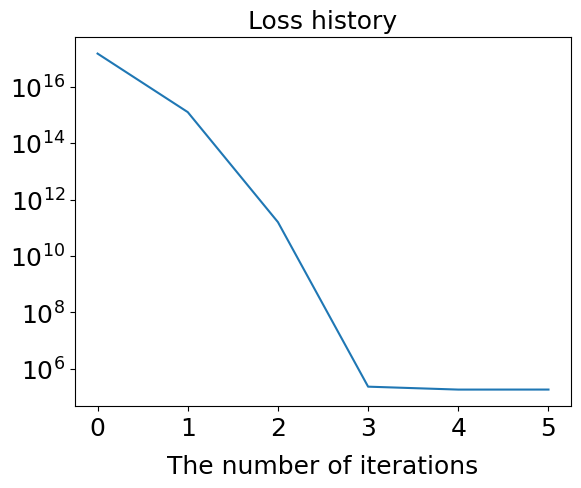}
			\caption{Loss history.}
			\label{fig:NonlinearElliptic:loss_hist}
		\end{subfigure} 
		\begin{subfigure}[b]{0.3\textwidth}
			\includegraphics[width=\textwidth]{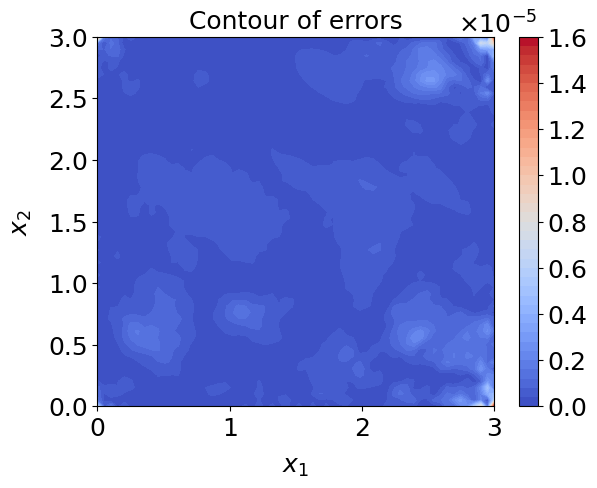}
			\caption{Pointwise errors.}
			\label{fig:NonlinearElliptic:errors}
		\end{subfigure}
	%\begin{subfigure}[b]{0.3\textwidth}\includegraphics[width=\textwidth]{figures/NonlinearElliptic/variance.png}\caption{Pointwise variances.}\label{fig:NonlinearElliptic:variances}\end{subfigure}
	\caption{\color{black}Numerical results for the nonlinear elliptic equation: (a) a set of sample points; (b) a collection of inducing points; (c) the convergence history of the Gauss--Newton iterations; (d) contours of pointwise errors. The regularization parameters $\gamma=\eta=10^{-12}$. $N=9600, N_\Omega=0.75\times N$, $M=1200, M_\Omega=0.75\times M$.}
	\label{fig:NonlinearElliptic}
\end{figure}

To further explore the efficacy of our algorithm. We compare the $L^\infty$ errors of the SGP method and  the GP algorithm in \cite{chen2021solving} for different numbers of sample points and inducing points in Table \ref{tb:error_comp}. We see that when $N=M$, both algorithms achieve accuracy of the same magnitude. Meanwhile, when $N>>M$, the SGP method with $N$ sample points and $M$ inducing points is more accurate than the GP method with $M$ sample points. {\color{black}This phenomenon implies that for answering Problem \ref{main_prob},  purely reducing the number of base functions in a function space may not result in an efficient subspace.} Furthermore, {\color{black}Table \ref{tb:error_comp} implies that} the SGP method with $N$ sample points, half of which are used as inducing points, achieves comparable accuracy to the GP method with the same number of sample points. {\color{black}This is also confirmed in Figure \ref{fig:NE:Accuracy}, where we plot the means and the standard variances of the maximum pointwise errors over $10$ different realizations against different numbers of inducing points when the number of samples is fixed to be $4800$. Figure \ref{fig:NE:Accuracy} shows that the accuracy is not deteriorated if the number of inducing points is larger than $2000$, which is smaller than half the number of samples.}  Hence, by using \eqref{rIQpsiphi}, the size of the matrix needed to be inverted in the SGP method can be significantly reduced. In other words, the SGP approach consumes much less computational time while retaining desirable accuracy compared to the GP method.  {\color{black}Figure \ref{fig:NE:Accuracy} also implies that if the number of inducing points is too small, the induced space $\mathcal{U}_Q$ cannot maintain sufficient information about the approximated solution. Therefore, there is a trade-off between accuracy and efficiency.} {\color{black}If the number of inducing points is chosen properly, the subspace generated by the SGP method serves as a candidate answer to Problem \ref{main_prob} in terms of the GP method.}

\begin{table}[ht!]
	\centering
 {\color{black}
	\begin{tabular}{c  c  c  c c}
		\hline
		$N$ & $1200$ &  $2400$ &  $4800$ & $9600$ \\
		\hline
		GP, $L^\infty$ error & $1.34\times 10^{-1}$  & $1.01\times 10^{-3}$ & $6.78\times 10^{-5}$  &  $4.46\times 10^{-6}$  \\
		SGP($M=600$), $L^\infty$ error& $1.46\times 10^{-1}$  &  $9.34\times 10^{-3}$ &  $2.55\times 10^{-3}$ & $2.91\times 10^{-3}$  \\
		SGP($M=1200$), $L^\infty$ error &  $1.40\times 10^{-1}$  &   $1.37\times 10^{-3}$ &  $7.58\times 10^{-5}$ & $2.16\times 10^{-5}$  \\
		SGP($M=2400$), $L^\infty$ error &  N/A & $1.92\times 10^{-3}$  & $2.62\times 10^{-5}$ & $6.20\times 10^{-6}$ \\
		SGP($M=4800$), $L^\infty$ error &  N/A & N/A & $3.15\times 10^{-5}$ & $5.13\times 10^{-6}$ \\
		\hline
	\end{tabular}
	\caption{\color{black}The nonlinear elliptic equation: $L^\infty$ errors of the SGP method and the GP algorithm for different numbers of sample points and inducing points. The N/A means not available. We set $\gamma=\eta=10^{-12}$. }
	\label{tb:error_comp}
 }
\end{table}

{
\color{black}
\begin{figure}[!hbtbp]
	\centering          
		\begin{subfigure}[b]{0.3\textwidth}
			\includegraphics[width=\textwidth]{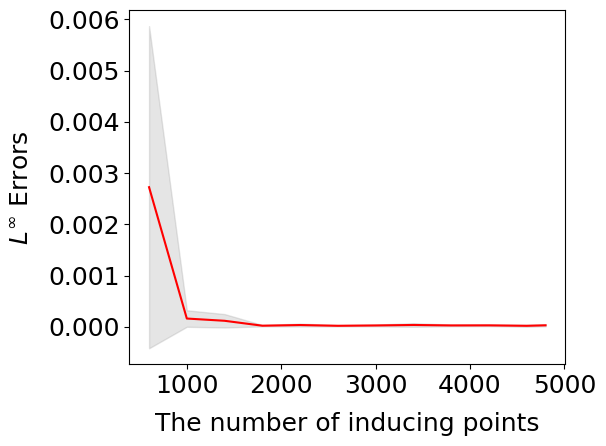}
		\end{subfigure} 
	\caption{\color{black}The nonlinear elliptic equation: $L^\infty$ errors for different numbers of inducing points when the number of samples $N$ equals to $4800$.}
	\label{fig:NE:Accuracy}
\end{figure}
}

{
\color{black}
\subsection{A Stationary Mean Field Game}
\label{subsecMFG}
In this subsection, we explore the behavior of the SGP method by solving  a stationary mean field game (MFG) system, which  consists of two coupled PDEs, a Hamilton--Jacobi equation and a Fokker--Planck equation. 
MFGs model the behaviors of agents in a large population. To learn more about the use of finite differences and machine learning methods for solving MFGs, we suggest referring to the references \cite{achdou2010mean, briceno2018proximal, ruthotto2020machine, lin2020apac}. The GP method and a random Fourier feature algorithm for solving MFG  are presented in \cite{mou2022numerical}. Following the same framework as in \cite{mou2022numerical}, our SGP method can be naturally extended to solve PDE systems.  In this example, we seek $(u, m, \lambda)\in C^\infty(\mathbb{T}^2)\times C^\infty(\mathbb{T}^2)\times\mathbb{R}$ solving 
\begin{align}
\label{mfgsys}
\begin{cases}
-\nu\Delta u + \frac{|\nabla u|^2}{2} +\frac{ \sin(4\pi x_1) + \cos(4\pi x_1) +\sin(4\pi x_2)}{2} = m^2 + \lambda,  \forall x:=(x_1, x_2)\in \mathbb{T}^2,\\
-\nu\Delta m - \div(m\nabla u) = 0, \forall x:=(x_1, x_2)\in \mathbb{T}^2, \\
\int_{\mathbb{T}^2}m\dif x = 1, \int_{\mathbb{T}^2} u\dif x = 0,
\end{cases}
\end{align}
where $\nu=0.1$ and $\mathbb{T}^2$ is the two-dimensional torus.  In the computations, we identify $\mathbb{T}^2$ with $[-0.5, 0.5]^2$. Meanwhile, to deal with the periodic condition, we choose the  kernel
\begin{align}
\label{pok}
K((x_1, x_2), (x_1', x_2'))=\operatorname{exp}(\cos(2\pi(x_1-x_1'))+\cos(2\pi(x_2-x_2'))-2).
\end{align}
Then, we randomly select $M$ inducing points from $N$ uniform random samples. For simplicity, we approximate both $u$ and $m$ in the same RKHS $\mathcal{U}_{Q}$ generated by inducing points as described in Subsection \ref{subsecRKHSIP}. More precisely, we consider the following  minimization problem
\begin{align}
\label{mfgopt}
\begin{cases}
\min_{u, m\in \mathcal{U}_{Q}, \lambda\in \mathbb{R}, \boldsymbol{z}, \boldsymbol{\rho}\in \mathbb{R}^N} \gamma\|u\|_{\mathcal{U}_{Q}}^2+\gamma\|m\|_{\mathcal{U}_Q}^2 + \gamma\lambda^2 + \sum_{j=1}^N|z_j^{(1)}-u(\boldsymbol{x}_j)|^2
 + \sum_{j=1}^N|z_j^{(2)}-\partial_{x_1} u(\boldsymbol{x}_j)|^2\\
\qquad +\sum_{j=1}^N|z_j^{(3)}-\partial_{x_2}u(\boldsymbol{x}_j)|^2+\sum_{j=1}^N|z_j^{(4)}-\Delta u(\boldsymbol{x}_j)|^2 + \sum_{j=1}^N|\rho_j^{(1)}-m(\boldsymbol{x}_j)|^2\\
\quad\quad + \sum_{j=1}^N|\rho_j^{(2)}-\partial_{x_1} m(\boldsymbol{x}_j)|^2 +\sum_{j=1}^N|\rho_j^{(3)}-\partial_{x_2}m(\boldsymbol{x}_j)|^2+\sum_{j=1}^N|\rho_j^{(4)}-\Delta m(\boldsymbol{x}_j)|^2 \\
\quad\quad +\sum_{j=1}^N |\nu z_j^{(4)} - \frac{(z_j^{(2)} + z_j^{(3)})^2}{2}-\frac{\sin(4\pi x_{1,j})+\cos(4\pi x_{1,j})+\sin(4\pi x_{2,j})}{2}+(\rho_j^{(1)})^2+\lambda|^2\\
\quad\quad +\sum_{j=1}^N|-\nu \rho^{(4)}_j - \rho^{(2)}_jz_{j}^{(2)}   - \rho^{(3)}_jz_{j}^{(3)}  - \rho^{(1)}_jz^{(4)}|^2,\\
\text{s.t. } \sum_{j=1}^Nz_j^{(1)}=0,  \frac{1}{N}\sum_{j=1}^N\rho_j^{(1)}=1.  
\end{cases}
\end{align}
Then, we derive the representer formulas for $u$ and $m$ as what we did in Section \ref{secGF}. To study the efficacy of our SGP method, for clarification, we only compare the pointwise errors between a minimizer $\boldsymbol{m}^\dagger$ of \eqref{mfgopt} and a reference solution $m^*$ on a $60\times 60$ uniform grid. To get $m^*$, we use the proximal method proposed in  \cite{briceno2018proximal} to compute \eqref{mfgsys} on a $120\times 120$ uniform grid and linearly interpolate the solution on the coarser $60\times 60$ grid.  

Figure \ref{fig:mfg} illustrates the numerical results when we take $N=800$ samples and $M=50$ inducing points. Here, we set $\gamma=10^{-10}$ and $\eta=10^{-4}$. Figures \ref{fig:mfg:sample_points} and \ref{fig:mfg:inducing_points} plot the samples and inducing points, respectively. Figure \ref{fig:mfg:loss_hist} shows the loss history of the Gauss--Newton iterations and Figure \ref{fig:mfg:errors} presents the contour of pointwise errors. Table \ref{tb:mfg:error_comp} compares the $L^\infty$ errors of $m^*$ and $m^\dagger$ for different values of $N$ and $M$. We see that for solving the MFG system \eqref{mfgsys}, the SGP method exhibits similar efficacy as in the previous experiments for solving a single nonlinear elliptic equation. That is, the SGP method which takes half of the samples as inducing points, achieves comparable accuracy to the GP method using the same number of samples.

\begin{figure}[!hbtbp]
	\centering          
	\begin{subfigure}[b]{0.3\textwidth}
		\includegraphics[width=\textwidth]{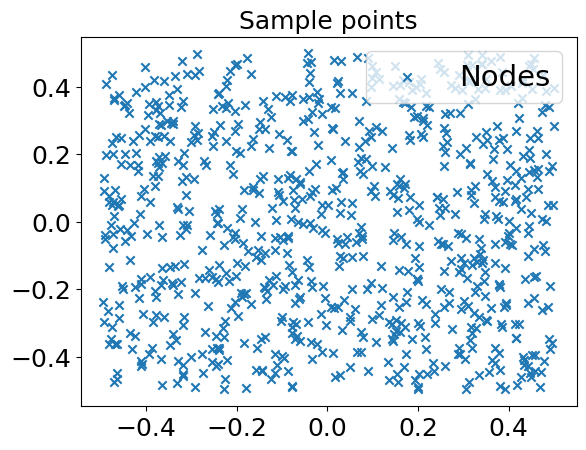}
		\caption{Sample points.}
		\label{fig:mfg:sample_points}
	\end{subfigure} 
	\begin{subfigure}[b]{0.3\textwidth}
		\includegraphics[width=\textwidth]{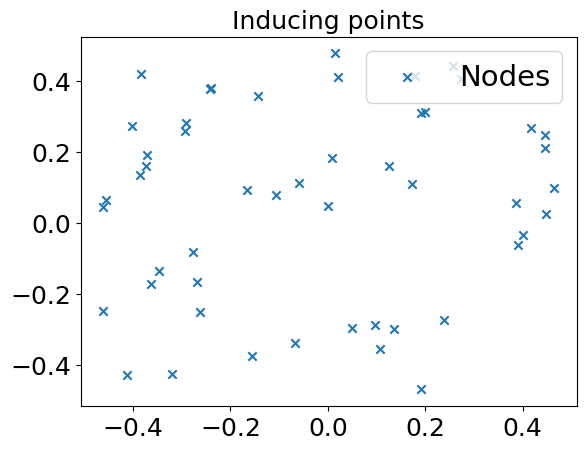}
		\caption{Inducing points.}
		\label{fig:mfg:inducing_points}
	\end{subfigure} \\
	\begin{subfigure}[b]{0.3\textwidth}
		\includegraphics[width=\textwidth]{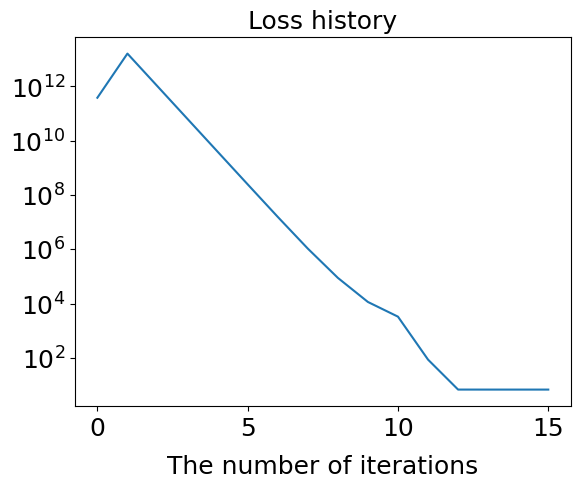}
		\caption{Loss history.}
		\label{fig:mfg:loss_hist}
	\end{subfigure} 
	\begin{subfigure}[b]{0.3\textwidth}
		\includegraphics[width=\textwidth]{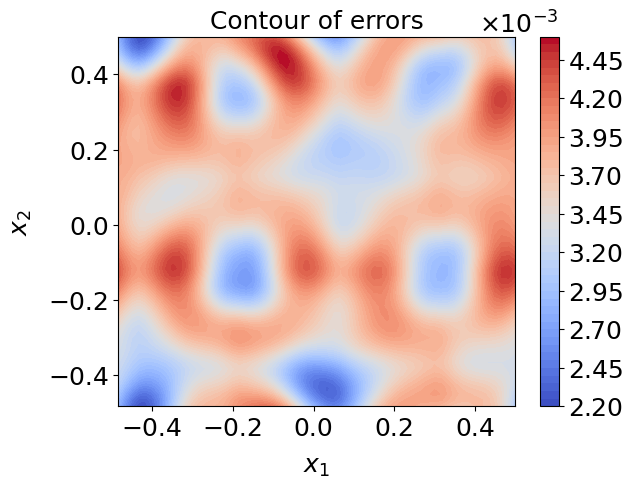}
		\caption{Pointwise errors.}
		\label{fig:mfg:errors}
	\end{subfigure}
	%\begin{subfigure}[b]{0.3\textwidth}\includegraphics[width=\textwidth]{figures/NonlinearElliptic/variance.png}\caption{Pointwise variances.}\label{fig:NonlinearElliptic:variances}\end{subfigure}
	\caption{\color{black}Numerical results for the stationary MFG: (a) a set of sample points; (b) a collection of inducing points; (c) the convergence history of the Gauss--Newton iterations; (d) contours of pointwise errors. The regularization parameters $\gamma=10^{-10}$ and $\eta=10^{-4}$. $N=800$ and $M=50$.}
	\label{fig:mfg}
\end{figure}

\begin{table}[ht!]
	\centering
 {\color{black}
	\begin{tabular}{c  c  c  c c}
		\hline
		$N$ & $100$ &  $200$ &  $400$ & $800$ \\
		\hline
		GP, $L^\infty$ error &  $1.09\times 10^{-1}$ & $2.31\times 10^{-2}$ &  $1.05 \times 10^{-2}$ & $4.36\times 10^{-3}$ \\
		SGP($M=50$), $L^\infty$ error & $1.16\times 10^{-1}$  & $2.61\times 10^{-2}$  & $1.10\times 10^{-2}$  & $6.20\times 10^{-3}$ \\
		SGP($M=100$), $L^\infty$ error & $1.18\times 10^{-1}$   & $2.31\times 10^{-2}$  & $1.25\times 10^{-2}$  & $5.32\times 10^{-3}$  \\
		SGP($M=200$), $L^\infty$ error &  N/A & $2.26\times 10^{-2}$  & $1.15\times 10^{-2}$ & $6.97\times 10^{-3}$ \\
		SGP($M=400$), $L^\infty$ error &  N/A & N/A   & $1.12\times 10^{-2}$ & $4.04\times 10^{-3}$ \\
		\hline
	\end{tabular}
	\caption{\color{black}The stationary MFG: The $L^\infty$ errors of the SGP method and the GP algorithm are analyzed under varying sample point and inducing point configurations. The N/A means not available. We set $\gamma=10^{-10}$ and $\eta=10^{-4}$.}
	\label{tb:mfg:error_comp}
 }
\end{table}

}

\subsection{Burgers' equation}
\label{subsecBurgers}
In this example, we consider the same Burger's equation as in \cite{chen2021solving}. More precisely, given $\nu=0.02$, we seek to find $u$ solving 
\begin{align*}
\begin{cases}
\partial_t u + u \partial_x u + \nu \partial_x^2 u = 0, \forall (t, x)\in (0, 1]\times (-1, 1),\\
u(0, x) = -\sin(\pi x), \forall x\in [-1, 1],\\
u(t, -1) = u(t, 1)=0, \forall t\in [0, 1].
\end{cases}
\end{align*}
We uniformly sample $N$ points in the space-time domain, from which $M$ samples are randomly chosen as inducing points. All experiments in this example use a fixed ratio of interior points, $N_\Omega/N = M_\Omega/M=5/6$. To take into consideration the space and time variability of the solution to Burger's equation, as in \cite{chen2021solving}, we choose the anisotropic kernel
\begin{align}
\label{anisotropickn}
K((t, x), (t', x'))=\operatorname{exp}(-(t-t')^2/\sigma_1^2-(x-x')^2/{\sigma_2^2}),
\end{align}
with $(\sigma_1, \sigma_2)=(0.3, 0.05)$. For comparison, as in \cite{chen2021solving}, the true solution is calculated from the Cole--Hopf transformation and the numerical quadrature. We use the technique of eliminating variables to deal with the constraints in \eqref{zsys} (see Subsection 3.3 in \cite{chen2021solving}) and use the Gauss--Newton method to solve the resulting unconstrained minimization problem. All the experiments in this example stop once the errors between two successive steps are less than $10^{-5}$. 

Figure \ref{fig:burgers} shows the numerical results for $N=2400$ and $M=600$ with parameters $\gamma=\eta=10^{-6}$. The samples and the inducing points are given in Figures \ref{fig:burgers:sample_points} and \ref{fig:burgers:inducing_points}. The history of Gauss--Newton iterations is plotted in Figure \ref{fig:burgers:loss_hist}, which implies the fast convergence of the SGP method. The absolute values of the pointwise errors between the numerical solution and the true solution are shown in Figure \ref{fig:burgers:errors}. Similar to the results of the GP method given in \cite{chen2021solving}, the maximum errors occurred close to the 
shock. In Figures \ref{fig:burgers:0.25.png}-\ref{fig:burgers:0.75.png}, we  compare the
numerical and true solutions at times $t = 0.25, 0.5, 0.75$ to  highlight the accuracy of our SGP method.

In Table \ref{tb:burgers:error_comp},  we present the $L^\infty$ errors of the SGP method and  the GP algorithm in \cite{chen2021solving} for different choices of $N$ and $M$. Similar to the nonlinear elliptic example, the SGP algorithm yields comparable accurate numerical solutions to the GP method by using half of the samples as inducing points. 

\begin{figure}[!hbtbp]
	\centering    
		\begin{subfigure}[b]{0.3\textwidth}
			\includegraphics[width=\textwidth]{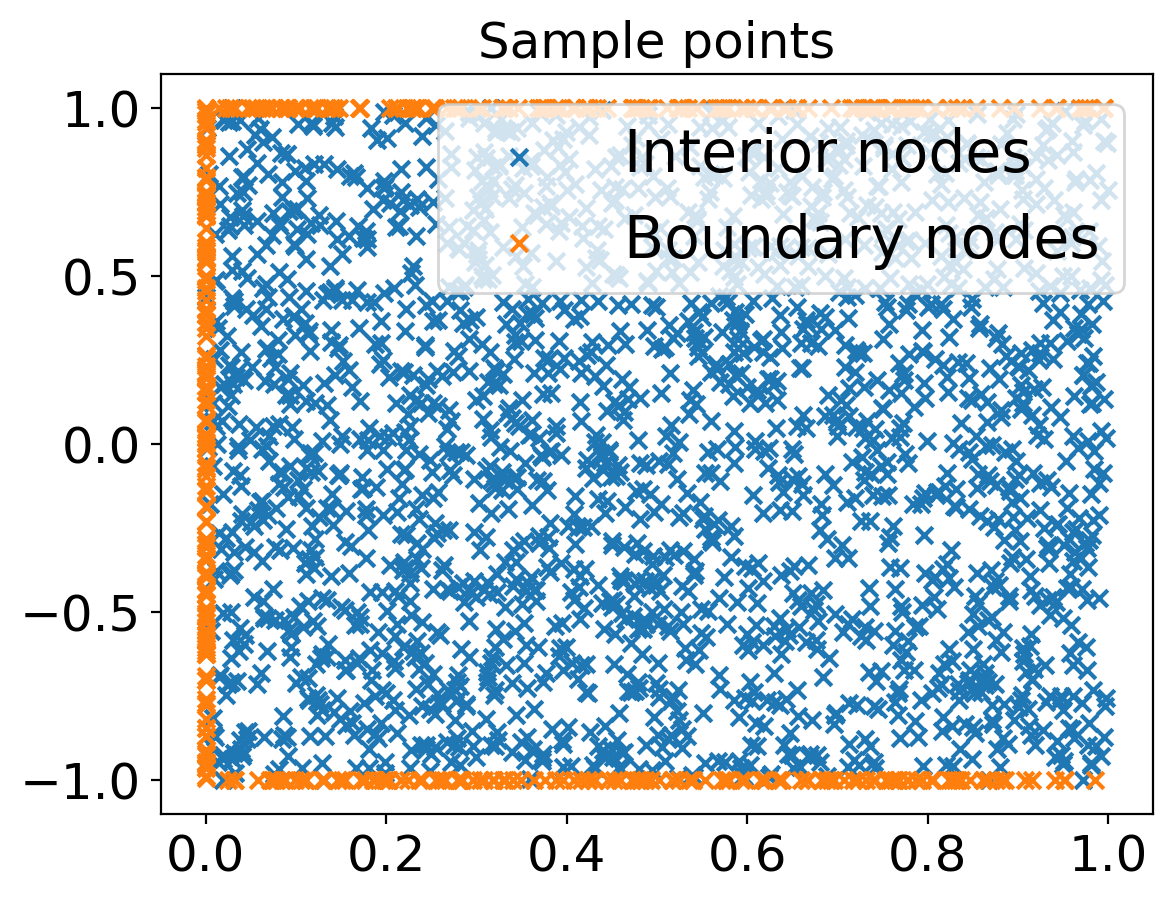}
			\caption{Sample points.}
			\label{fig:burgers:sample_points}
		\end{subfigure}
		\begin{subfigure}[b]{0.3\textwidth}
			\includegraphics[width=\textwidth]{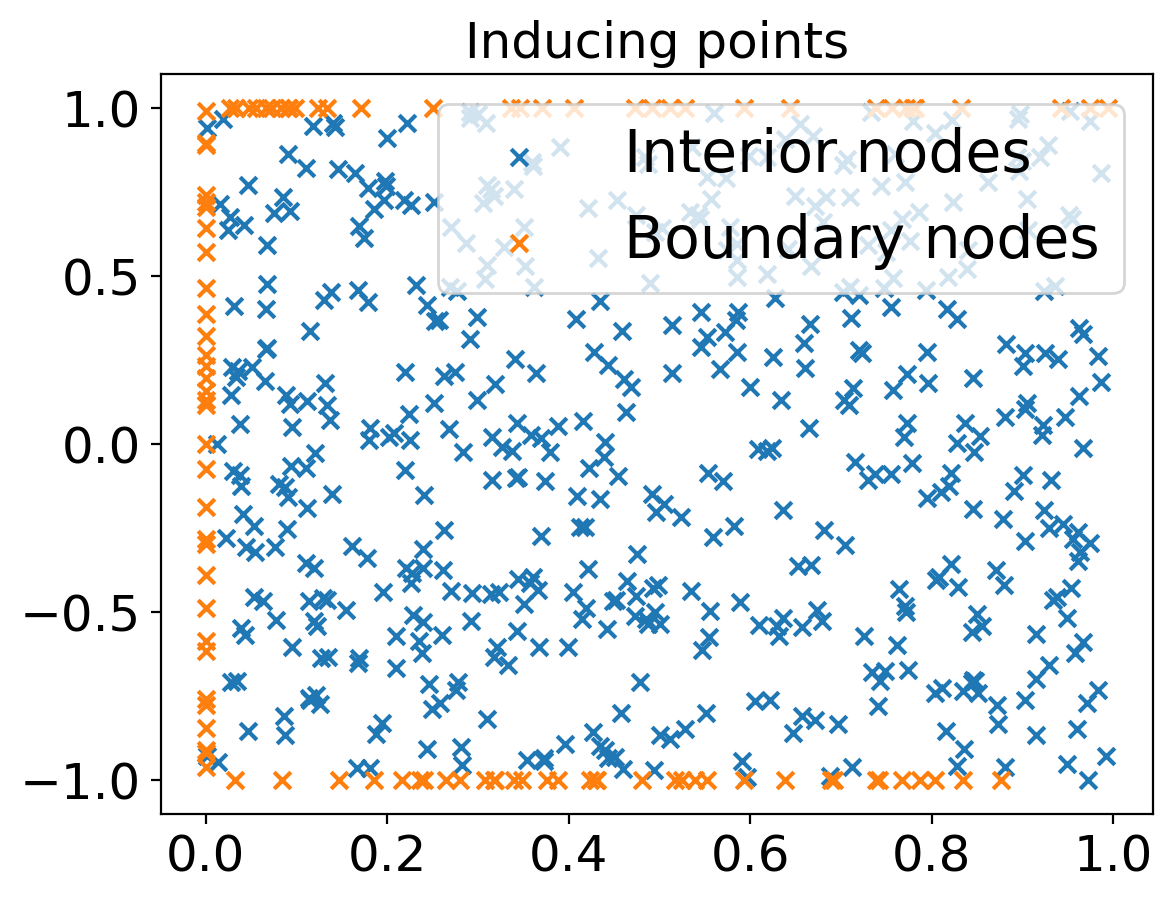}
			\caption{Inducing points.}
			\label{fig:burgers:inducing_points}
		\end{subfigure} \\
		\begin{subfigure}[b]{0.29\textwidth}
			\includegraphics[width=\textwidth]{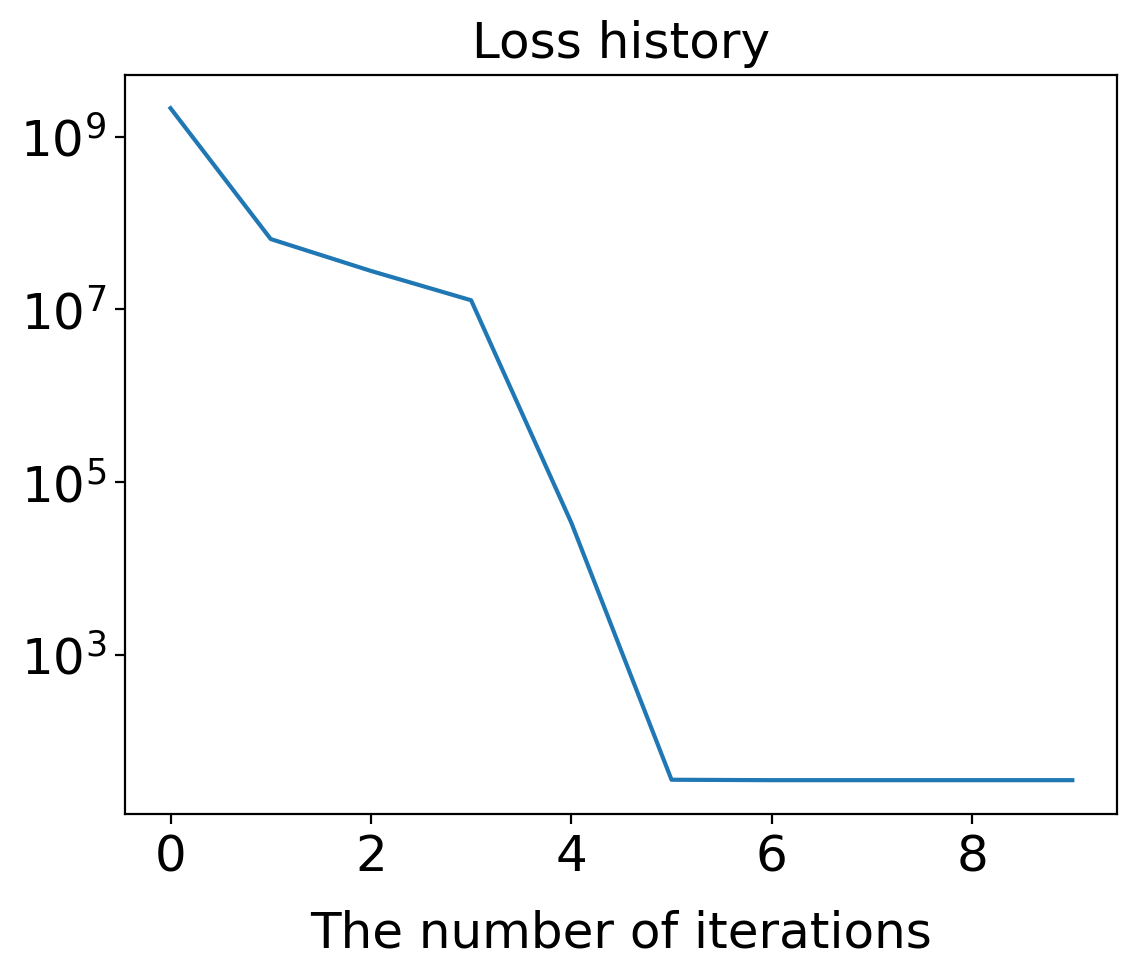}
			\caption{Loss history.}
			\label{fig:burgers:loss_hist}
		\end{subfigure}
		\begin{subfigure}[b]{0.3\textwidth}
			\includegraphics[width=\textwidth]{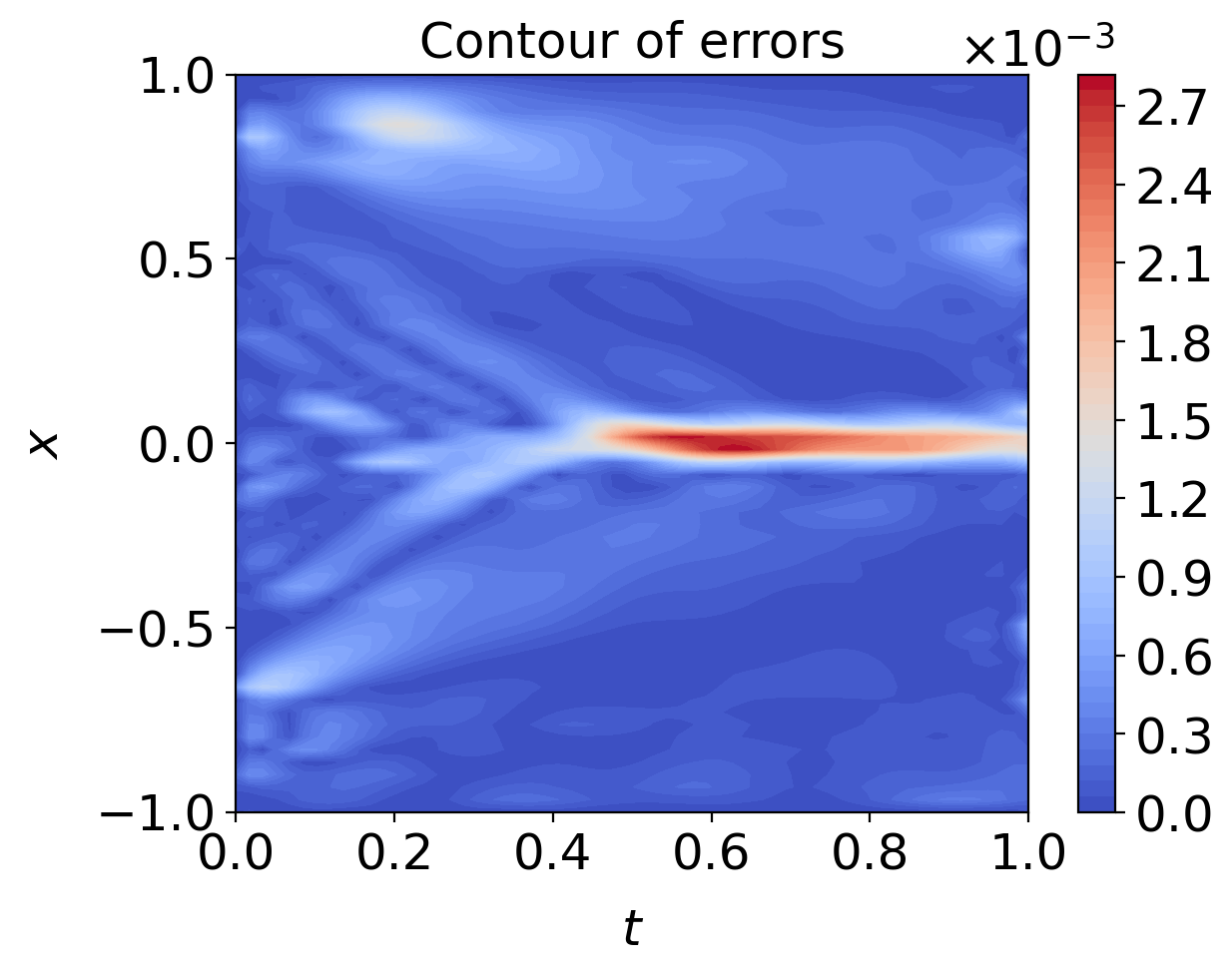}
			\caption{Pointwise errors.}
			\label{fig:burgers:errors}
		\end{subfigure}\\
		\begin{subfigure}[b]{0.3\textwidth}
			\includegraphics[width=\textwidth]{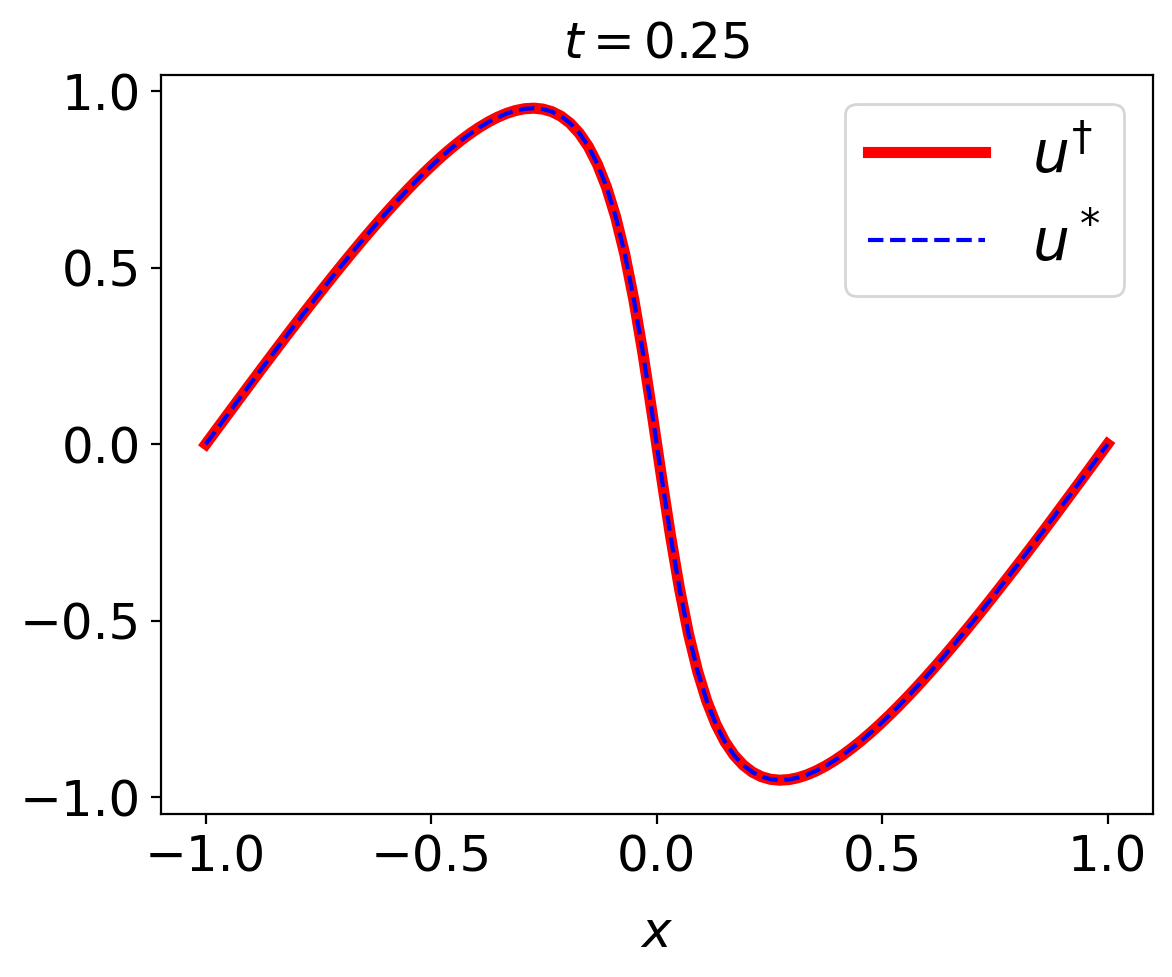}
			\caption{$u^\dagger$ v.s. $u^*$ at $t=0.25$.}
			\label{fig:burgers:0.25.png}
		\end{subfigure}
		\begin{subfigure}[b]{0.3\textwidth}
			\includegraphics[width=\textwidth]{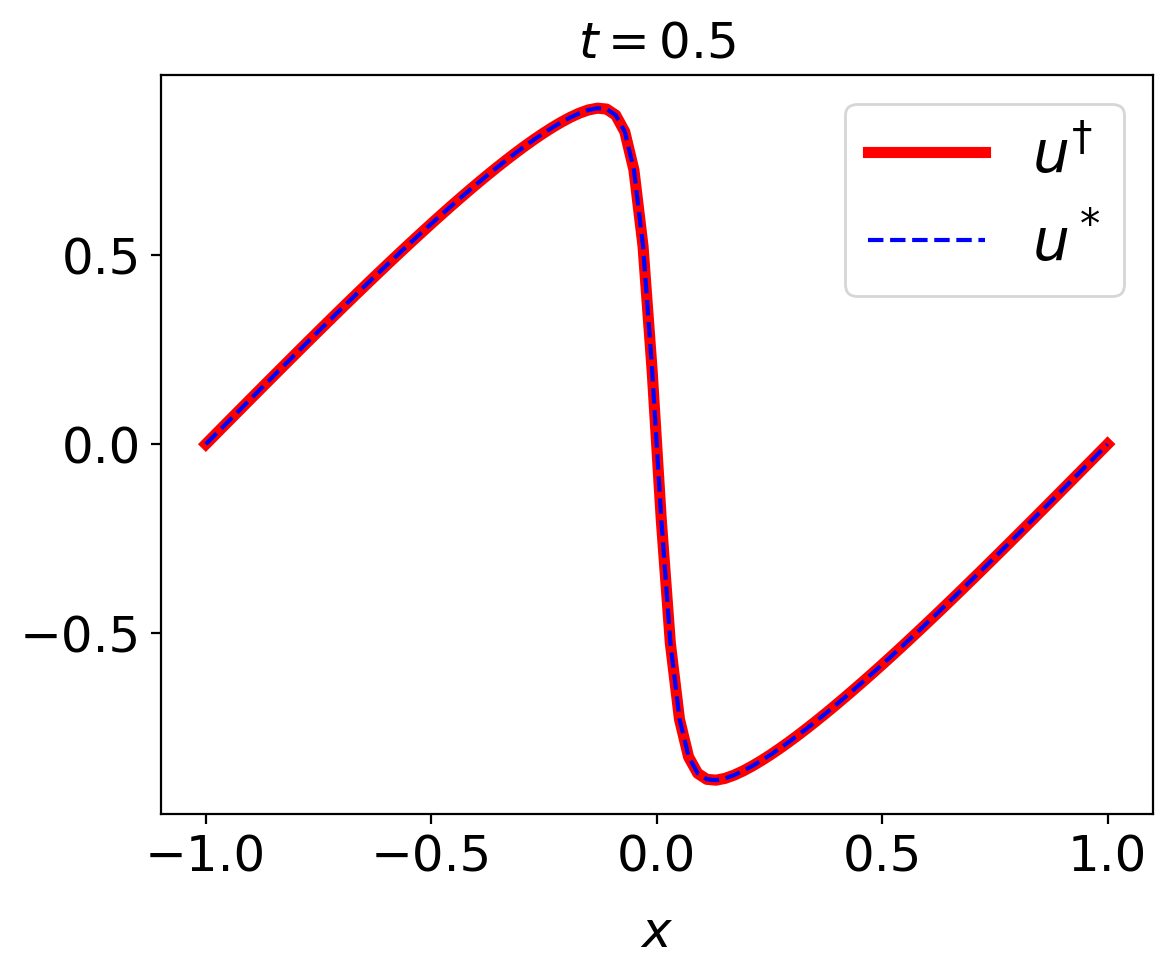}
			\caption{$u^\dagger$ v.s. $u^*$ at $t=0.5$.}
			\label{fig:burgers:0.5.png}
		\end{subfigure}\begin{subfigure}[b]{0.3\textwidth}
		\includegraphics[width=\textwidth]{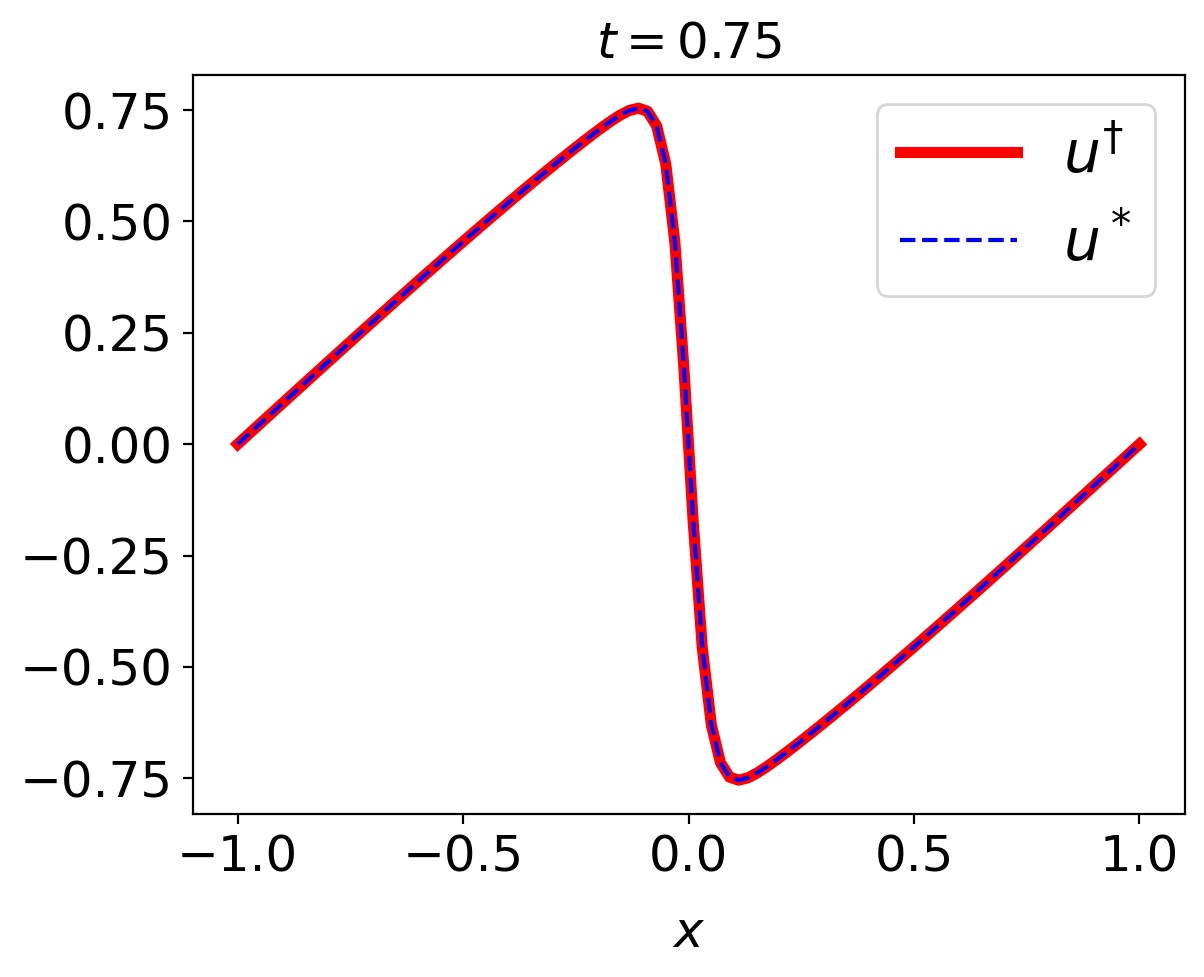}
		\caption{$u^\dagger$ v.s. $u^*$ at $t=0.75$.}
		\label{fig:burgers:0.75.png}
	\end{subfigure}
	\caption{Numerical results for the Burger's  equation: (a) a set of sample points; (b) a collection of inducing points; (c) the convergence history of the Gauss--Newton iterations; (d) contours of pointwise errors; (e)-(g) time slices of the numerical approximation $u^\dagger$ and the true solution $u^*$ at $t = 0.25, 0.5, 0.75$. The parameters $\gamma=\eta=10^{-6}$, $N=2400, N_\Omega=5 N/6$, $M=600$, and $M_\Omega=5 M /6 $.}
	\label{fig:burgers}
\end{figure}

\begin{table}[ht!]
	\centering
	\begin{tabular}{c  c  c  c c}
		\hline
		$N$ & $600$ &  $1200$ &  $2400$ & $4800$ \\
		\hline
		GP, $L^\infty$ error & $6.29\times 10^{-1}$ & $4.66\times 10^{-2}$  &  $4.92\times 10^{-3}$ & $4.60\times 10^{-4}$\\
		%SGP($M=600$), $L^\infty$ error&  $1.79$ & $9.95\times 10^{-2}$ & $2.65\times 10^{-1}$ &  $4.76\times 10^{-1}$\\
		%SGP($M=300$), $L^\infty$ error &  $6.09\times 10^{-1}$ &   &  &  \\
		SGP($M=300$), $L^\infty$ error &  $6.80\times 10^{-1}$  &  $7.10\times 10^{-2}$ & $1.72\times 10^{-2}$  &  $1.15\times 10^{-2}$ \\
		SGP($M=600$), $L^\infty$ error &  $7.15\times 10^{-1}$ &  $7.56\times 10^{-2}$ & $4.60\times 10^{-3}$ & $1.30\times 10^{-3}$\\
		SGP($M=1200$), $L^\infty$ error &  N/A &  $5.83\times 10^{-2}$ & $4.24\times 10^{-3}$ & $1.13\times 10^{-3}$ \\
		SGP($M=2400$), $L^\infty$ error &  N/A & N/A  & $3.53\times 10^{-3}$  & $4.47\times 10^{-4}$ \\
		\hline
	\end{tabular}
	\caption{Burger's equation: $L^\infty$ errors of the SGP method and the GP algorithm for different numbers of sample points and inducing points. The N/A means not available. We set $\gamma=\eta=10^{-6}$ for $N\leq 1200$ and choose $\gamma=\eta=10^{-8}$ for $N= 2400$.}
	\label{tb:burgers:error_comp}
\end{table}

\subsection{A Nonlinear Parabolic Equation}
\label{subsecParabo}
We consider the numerical solution of the parabolic equation 
\begin{align*}
\begin{cases}
\partial_t u - \partial_x^2u + \frac{1}{2}|\partial_x u|^2 + u + x\partial u_x = f, \forall (t, x)\in (0, 1]\times (0, 3/2),\\
u(0, x)= g(x), \forall x\in (0, 3/2),\\
u(t, -1) = h_1(x), 
u(t, 1) = h_2(x), \forall t \in (0, 1).
\end{cases}
\end{align*}
We prescribe the true solution $u(t, x)=(\sin(\pi x)+2\cos(2\pi x))e^{-t}$ and compute $f$, $g$, $h_1$, and $h_2$ accordingly. 
We fix the ratios $N_\Omega/N=M_\Omega/M=6/7$. Meanwhile, we use the anisotropic kernel in \eqref{anisotropickn} with the same lengthscales, i.e., $(\sigma_1, \sigma_2)=(0.3, 0.05)$. We use the technique of eliminating variables to handle the constraints in \eqref{zsys} (see Subsection 3.3 of \cite{chen2021solving}). All the experiments in this example stop once the errors between two successive steps are less than $10^{-5}$ or the iteration numbers exceed $20$. 

Figure \ref{fig:parabolic} plots the numerical results when $N=2800$, $M=700$, and $\gamma=\eta=10^{-10}$. Figures \ref{fig:parabolic:sample_points} and \ref{fig:parabolic:inducing_points} depict the samples and the inducing points. The history of Gauss--Newton iterations is plotted in Figure \ref{fig:parabolic:loss_hist}, which shows that the algorithm converges within $6$ iterations. The absolute values of the pointwise errors between the numerical solution and the true solution are given in Figure \ref{fig:parabolic:errors}.  Table \ref{tb:parabolic:error_comp} records the $L^\infty$ errors of the SGP method and  the GP algorithm in \cite{chen2021solving} for different values of $N$ and $M$, which justifies the efficacy of the SGP method.

\begin{figure}[!hbtbp]
	\centering          
	\begin{subfigure}[b]{0.3\textwidth}
		\includegraphics[width=\textwidth]{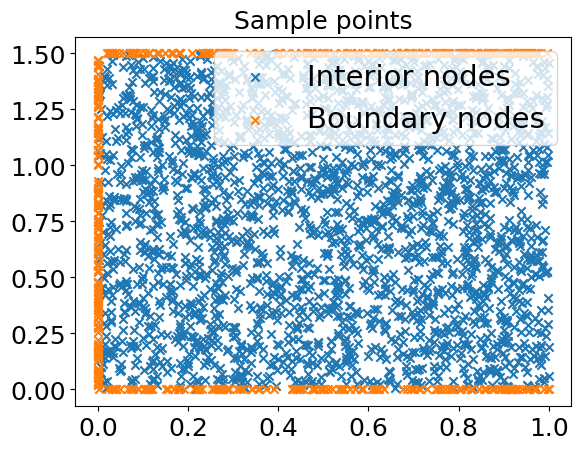}
		\caption{Sample points.}
		\label{fig:parabolic:sample_points}
	\end{subfigure} 
	\begin{subfigure}[b]{0.3\textwidth}
		\includegraphics[width=\textwidth]{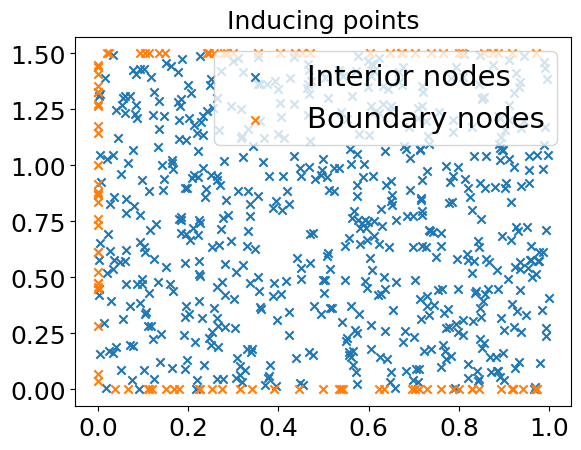}
		\caption{Inducing points.}
		\label{fig:parabolic:inducing_points}
	\end{subfigure} \\
	\begin{subfigure}[b]{0.3\textwidth}
		\includegraphics[width=\textwidth]{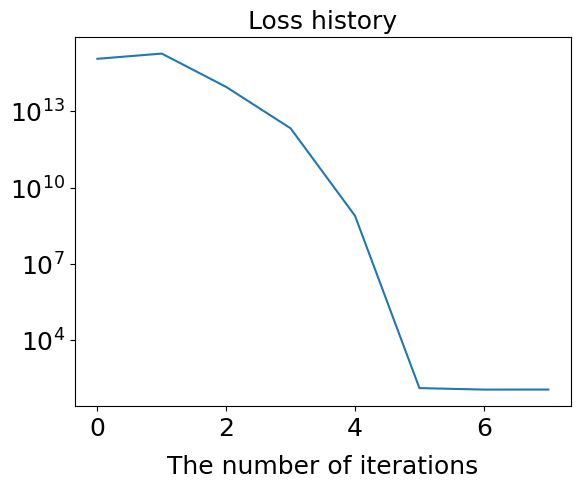}
		\caption{Loss history.}
		\label{fig:parabolic:loss_hist}
	\end{subfigure} 
	\begin{subfigure}[b]{0.3\textwidth}
		\includegraphics[width=\textwidth]{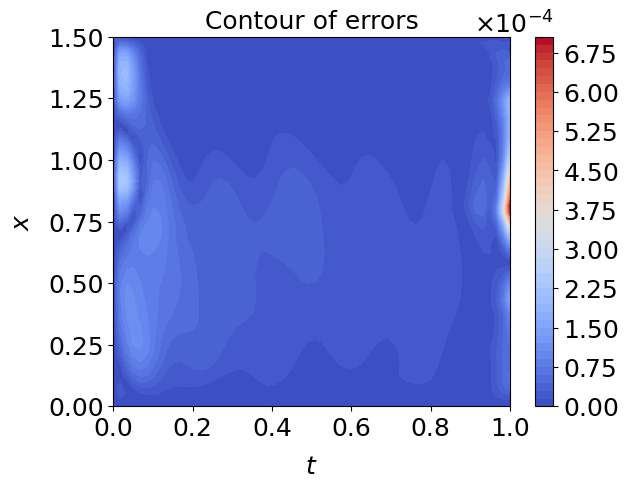}
		\caption{Pointwise errors.}
		\label{fig:parabolic:errors}
	\end{subfigure}
	%\begin{subfigure}[b]{0.3\textwidth}\includegraphics[width=\textwidth]{figures/NonlinearElliptic/variance.png}\caption{Pointwise variances.}\label{fig:NonlinearElliptic:variances}\end{subfigure}
	\caption{Numerical results for the parabolic equation: (a) a set of sample points; (b) a collection of inducing points; (c) the convergence history of the Gauss--Newton iterations; (d) contours of pointwise errors. The regularization parameters $\gamma=\eta=10^{-10}$. $N=2800, N_\Omega=6/7 \times N$, $M=700, M_\Omega=6/7\times M$.}
	\label{fig:parabolic}
\end{figure}

\begin{table}[ht!]
	\centering
	\begin{tabular}{c  c  c  c c}
		\hline
		$N$ & $700$ &  $1400$ &  $2800$ & $5600$ \\
		\hline
		GP, $L^\infty$ error & $2.75\times 10^{-1}$  & $6.42\times 10^{-3}$  &  $5.13\times 10^{-4}$ & $7.25\times 10^{-5}$ \\
		SGP($M=350$), $L^\infty$ error & $3.09\times 10^{-1}$   & $2.92\times 10^{-2}$  &  $2.67\times 10^{-2}$ & $3.63\times 10^{-3}$ \\
		SGP($M=700$), $L^\infty$ error & $2.79\times 10^{-1}$ & $1.50\times 10^{-2}$  & $6.93\times 10^{-4}$  & $1.00\times 10^{-4}$ \\
		SGP($M=1400$), $L^\infty$ error &  N/A & $9.47\times 10^{-3}$  &  $5.22\times 10^{-4}$ & $4.70\times 10^{-5}$\\
		SGP($M=2800$), $L^\infty$ error &  N/A & N/A  & $3.36\times 10^{-4}$  &  $4.32\times 10^{-5}$ \\
		\hline
	\end{tabular}
	\caption{The parabolic PDE: $L^\infty$ errors of the SGP method and the GP algorithm for different numbers of sample points and inducing points. The N/A means not available. We set $\gamma=\eta=10^{-10}$.}
	\label{tb:parabolic:error_comp}
\end{table}

\section{Conclusion and Future Work}
\label{secConclu}
{\color{black}In general, numerical techniques used to solve nonlinear PDEs provide an estimation of the solution within a specific function space. The function space may contain redundant information in terms of PDE solutions if it is not chosen properly. In this paper, we formulate the problem of finding a ``compressed" subspace, in which we seek an approximated solution to a nonlinear PDE with a negligible loss in accuracy compared to the result of using the original function space. In particular, we examine the GP method that approximates functions in the span of base functions formulated by evaluating derivatives of different orders of kernels at random samples. We present a SGP method which seeks solutions for nonlinear PDEs in ``condensed" RKHSs.}
Specifically, the SGP method finds a numerical solution to a PDE in the RKHS associated with a low-rank kernel generated by inducing points. Meanwhile, the approximation solution can be viewed as a maximum \textit{a posteriori} probability estimator of a SGP conditioned a noisy observation of values of linear operators satisfying the PDE at a finite set of samples. {\color{black}Our numerical experiments imply that if the number of inducing points is chosen properly, the SGP algorithm produces a subspace that holds sufficient enough information about the approximated solution. Hence, the SGP method} consumes less computational time than the GP method in \cite{chen2021solving} and achieves comparable accuracy when both methods use uniform samples. We notice that the positions of inducing points greatly influence the performance of our algorithm. Hence, a potential future work is to investigate a better way to put inducing points. Meanwhile, the choice of hyperparameters has a profound impact on the performance of our method. The probabilistic interpretation of the SGP approach in Subsection \ref{subsecProb} provides a way for hyperparameter learning in future work. 
\begin{comment}
\begin{figure}
\centering
\includegraphics[width=0.3\textwidth]{frog.jpg}
\caption{\label{fig:frog}This frog was uploaded via the file-tree menu.}
\end{figure}

\begin{table}
\centering
\begin{tabular}{l|r}
Item & Quantity \\\hline
Widgets & 42 \\
Gadgets & 13
\end{tabular}
\caption{\label{tab:widgets}An example table.}
\end{table}
\end{comment}

\bibliographystyle{plain}
%\bibliography{sample}

\clearpage
\appendix

\section{Proof for The Error of The SGP Method}
\label{profMt}
{
	\color{black}
We first give a proof for Lemma \ref{sxprop}. 
\begin{proof}[Proof of Lemma \ref{sxprop}]
We first show that  ${S}_{\overline{\boldsymbol{x}}}^*{S}_{\overline{\boldsymbol{x}}}+\gamma I$ is invertible for any $\gamma>0$. 
Let $u, v\in \mathcal{U}_Q$ and ${S}_{\overline{\boldsymbol{x}}}^*{S}_{\overline{\boldsymbol{x}}}u+\gamma u=v$. Then, according to \eqref{defS} and \eqref{defSxT}, we have
\begin{align}
\label{fmluf}
[\boldsymbol{\psi}, u]^T\mathcal{Q}\boldsymbol{\psi} + \gamma u = v. 
\end{align}
Acting $\boldsymbol{\psi}$ on both sides of \eqref{fmluf}, we get
\begin{align*}
    \mathcal{Q}(\boldsymbol{\psi}, \boldsymbol{\psi})[\boldsymbol{\psi}, u] + \gamma [\boldsymbol{\psi}, u] = [\boldsymbol{\psi}, v].
\end{align*}
Since $\mathcal{Q}(\boldsymbol{\psi}, \boldsymbol{\psi})$ is positive semi-definite, $\mathcal{Q}(\boldsymbol{\psi}, \boldsymbol{\psi})+\gamma I$ is invertible. Thus, 
\begin{align}
\label{psiuform}
    [\boldsymbol{\psi}, u] = (\gamma I + \mathcal{Q}(\boldsymbol{\psi}, \boldsymbol{\psi}))^{-1} [\boldsymbol{\psi}, v].
\end{align}
We get from \eqref{fmluf} and \eqref{psiuform} that
\begin{align*}
\begin{split}
u = \frac{1}{\gamma}v -\frac{1}{\gamma}[\boldsymbol{\psi}, v]^T(\gamma I + \mathcal{Q}(\boldsymbol{\psi}, \boldsymbol{\psi}))^{-1}\mathcal{Q}\boldsymbol{\psi},
\end{split}
\end{align*}
which implies that ${S}_{\overline{\boldsymbol{x}}}^*{S}_{\overline{\boldsymbol{x}}}+\gamma I$ is bijective. Furthermore, 
\begin{align}
\label{uformula}
\begin{split}
({S}_{\overline{\boldsymbol{x}}}^*{S}_{\overline{\boldsymbol{x}}}+\gamma I)^{-1}v = \frac{1}{\gamma}v -\frac{1}{\gamma}[\boldsymbol{\psi}, v]^T(\gamma I + \mathcal{Q}(\boldsymbol{\psi}, \boldsymbol{\psi}))^{-1}\mathcal{Q}\boldsymbol{\psi}, \forall v\in \mathcal{U}_Q. 
\end{split}
\end{align}
Similarly, one can show that ${S}_{\overline{\boldsymbol{x}}}{S}_{\overline{\boldsymbol{x}}}^*+\gamma I$ is bijective. 

To prove \eqref{transidt}, we see that
\begin{align*}
\begin{split}
({S}_{\overline{\boldsymbol{x}}}^*{S}_{\overline{\boldsymbol{x}}}+\gamma I){S}_{\overline{\boldsymbol{x}}}^*({S}_{\overline{\boldsymbol{x}}}{S}_{\overline{\boldsymbol{x}}}^*+\gamma I)^{-1}=&({S}_{\overline{\boldsymbol{x}}}^*{S}_{\overline{\boldsymbol{x}}}{S}_{\overline{\boldsymbol{x}}}^*+\gamma {S}_{\overline{\boldsymbol{x}}}^*)({S}_{\overline{\boldsymbol{x}}}{S}_{\overline{\boldsymbol{x}}}^*+\gamma I)^{-1}\\
=&{S}_{\overline{\boldsymbol{x}}}^*({S}_{\overline{\boldsymbol{x}}}{S}_{\overline{\boldsymbol{x}}}^*+\gamma I)({S}_{\overline{\boldsymbol{x}}}{S}_{\overline{\boldsymbol{x}}}^*+\gamma I)^{-1}={S}_{\overline{\boldsymbol{x}}}^*,
\end{split}
\end{align*}
which yields \eqref{transidt}. 

For any $c\in \ell^2(\overline{\boldsymbol{x}})$, using \eqref{uformula} and \eqref{defSxT}, we get
\begin{align*}
\begin{split}
({S}_{\overline{\boldsymbol{x}}}^*{S}_{\overline{\boldsymbol{x}}}+\gamma I)^{-1}{S}_{\overline{\boldsymbol{x}}}^*c =& \frac{1}{\gamma}c^T\mathcal{Q}\boldsymbol{\psi}-\frac{1}{\gamma}[\boldsymbol{\psi}, c^T\mathcal{Q}\boldsymbol{\psi}](\gamma I + \mathcal{Q}(\boldsymbol{\psi}, ,\boldsymbol{\psi}))^{-1}\mathcal{Q}\boldsymbol{\psi}\\
=& \frac{1}{\gamma}c^T\mathcal{Q}\boldsymbol{\psi}-\frac{1}{\gamma}c^T\mathcal{Q}(\boldsymbol{\psi}, \boldsymbol{\psi})(\gamma I + \mathcal{Q}(\boldsymbol{\psi}, \boldsymbol{\psi}))^{-1}\mathcal{Q}\boldsymbol{\psi}\\
=& \frac{1}{\gamma}c^T\mathcal{Q}\boldsymbol{\psi}-\frac{1}{\gamma}c^T(\mathcal{Q}(\boldsymbol{\psi}, \boldsymbol{\psi})+\gamma I - \gamma I)(\gamma I + \mathcal{Q}(\boldsymbol{\psi}, \boldsymbol{\psi}))^{-1}\mathcal{Q}\boldsymbol{\psi}\\
=& c^T(\gamma I + \mathcal{Q}(\boldsymbol{\psi}, \boldsymbol{\psi}))^{-1}\mathcal{Q}\boldsymbol{\psi},
\end{split}
\end{align*}
which concludes \eqref{eqre0}. 

For any $c\in \ell^2(\overline{\boldsymbol{x}})$, we use \eqref{eqre0} and obtain
\begin{align}
\label{eqImsc}
\begin{split}
(I - {S}_{\overline{\boldsymbol{x}}}({S}_{\overline{\boldsymbol{x}}}^*{S}_{\overline{\boldsymbol{x}}}+\gamma I)^{-1}{S}_{\overline{\boldsymbol{x}}}^*)c =& c - {S}_{\overline{\boldsymbol{x}}}((\mathcal{Q}\boldsymbol{\psi})^T(\gamma I + \mathcal{Q}(\boldsymbol{\psi}, \boldsymbol{\psi}))^{-1}c)\\
=& c - \mathcal{Q}(\boldsymbol{\psi}, \boldsymbol{\psi})(\gamma I + \mathcal{Q}(\boldsymbol{\psi}, \boldsymbol{\psi}))^{-1}c\\
=& c - (\gamma I + \mathcal{Q}(\boldsymbol{\psi}, \boldsymbol{\psi})-\gamma I)(\gamma I + \mathcal{Q}(\boldsymbol{\psi}, \boldsymbol{\psi}))^{-1}c\\
=& \gamma (\gamma I + \mathcal{Q}(\boldsymbol{\psi}, \boldsymbol{\psi}))^{-1}c. 
\end{split}
\end{align}
Since \eqref{eqImsc} holds for any $c\in \ell^2(\overline{\boldsymbol{x}})$, we conclude \eqref{bdImus}. 
\end{proof}

The next lemma is similar to Lemma \ref{sxprop}, in which we establish the properties of a sampling operator leveraging the information of all the samples. 
	\begin{lemma}
		\label{lmsopt}
		Define the sampling operator $\widehat{S}_{\overline{\boldsymbol{x}}}:\mathcal{U}\mapsto \ell^2(\overline{\boldsymbol{x}})$ by
		\begin{align*}
			\widehat{S}_{\overline{\boldsymbol{x}}}=[\boldsymbol{\psi}, u]. 
		\end{align*}
		Denote $\widehat{S}_{\overline{\boldsymbol{x}}}^*$ as the adjoint of $\widehat{S}_{\overline{\boldsymbol{x}}}$ {\color{black}in the sense that for any $c\in \ell^2(\overline{\boldsymbol{x}})$, $\langle \widehat{S}^*_{\overline{\boldsymbol{x}}}c, u\rangle = \langle \widehat{S}_{\overline{\boldsymbol{x}}}u, c\rangle_{\ell^2(\overline{\boldsymbol{x}})}$.} Then, 
		\begin{align}
			\label{defShatxT}
			\widehat{S}_{\overline{\boldsymbol{x}}}^*c = c^T\mathcal{K}\boldsymbol{\psi}, \forall c \in \ell^2(\overline{\boldsymbol{x}}). 
		\end{align}
		Besides, 
		\begin{align}
			\label{transidt1}
			\widehat{S}_{\overline{\boldsymbol{x}}}^*(\widehat{S}_{\overline{\boldsymbol{x}}}\widehat{S}_{\overline{\boldsymbol{x}}}^*+\gamma I)^{-1} = (\widehat{S}_{\overline{\boldsymbol{x}}}^*\widehat{S}_{\overline{\boldsymbol{x}}}+\gamma I)^{-1}\widehat{S}_{\overline{\boldsymbol{x}}}^*,
		\end{align}
		and 
		\begin{align}
			\label{eqre1}
			(\widehat{S}_{\overline{\boldsymbol{x}}}^*\widehat{S}_{\overline{\boldsymbol{x}}}+\gamma I)^{-1}\widehat{S}_{\overline{\boldsymbol{x}}}^*c = (\mathcal{K}\boldsymbol{\psi})^T(\gamma I + \mathcal{K}(\boldsymbol{\psi}, \boldsymbol{\psi}))^{-1}c, \forall c\in \ell^2(\overline{\boldsymbol{x}}).
		\end{align}
	\end{lemma}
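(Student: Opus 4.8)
The plan is to mirror the proof of Lemma \ref{sxprop} almost verbatim, replacing the induced covariance operator $\mathcal{Q}$ by the ambient operator $\mathcal{K}$ and the space $\mathcal{U}_{Q}$ by $\mathcal{U}$. The only structural fact used in that earlier proof is that the relevant Gram matrix plus $\gamma I$ is invertible, and this continues to hold here because $\mathcal{K}(\boldsymbol{\psi}, \boldsymbol{\psi})$ is positive semi-definite (indeed positive definite, since $\mathcal{K}$ is positive and the components of $\boldsymbol{\psi}$ are linearly independent) and $\gamma>0$.

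First I would establish \eqref{defShatxT}. Starting from the defining relation of the adjoint, for any $c\in\ell^2(\overline{\boldsymbol{x}})$ and $u\in\mathcal{U}$ I compute $\langle \widehat{S}_{\overline{\boldsymbol{x}}}^* c, u\rangle = \langle \widehat{S}_{\overline{\boldsymbol{x}}} u, c\rangle_{\ell^2(\overline{\boldsymbol{x}})} = c^T[\boldsymbol{\psi}, u]$. Using the identity $[\psi_j, u]=[\mathcal{K}^{-1}\mathcal{K}\psi_j, u]=\langle \mathcal{K}\psi_j, u\rangle$, which follows directly from the definition of the inner product on $\mathcal{U}$, I rewrite $c^T[\boldsymbol{\psi}, u]=\langle c^T\mathcal{K}\boldsymbol{\psi}, u\rangle$, and \eqref{defShatxT} follows since $u$ is arbitrary. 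This is the exact analog of the derivation of \eqref{defSxT}.

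Next I would establish the invertibility of $\widehat{S}_{\overline{\boldsymbol{x}}}^*\widehat{S}_{\overline{\boldsymbol{x}}}+\gamma I$ together with an explicit inverse formula. Setting $\widehat{S}_{\overline{\boldsymbol{x}}}^*\widehat{S}_{\overline{\boldsymbol{x}}}u+\gamma u=v$ and applying \eqref{defShatxT} gives $[\boldsymbol{\psi}, u]^T\mathcal{K}\boldsymbol{\psi}+\gamma u=v$. Acting with $\boldsymbol{\psi}$ on both sides yields $(\gamma I+\mathcal{K}(\boldsymbol{\psi}, \boldsymbol{\psi}))[\boldsymbol{\psi}, u]=[\boldsymbol{\psi}, v]$; inverting the positive-definite-plus-nugget matrix and substituting $[\boldsymbol{\psi}, u]$ back produces
\[
(\widehat{S}_{\overline{\boldsymbol{x}}}^*\widehat{S}_{\overline{\boldsymbol{x}}}+\gamma I)^{-1}v=\frac{1}{\gamma}v-\frac{1}{\gamma}[\boldsymbol{\psi}, v]^T(\gamma I+\mathcal{K}(\boldsymbol{\psi}, \boldsymbol{\psi}))^{-1}\mathcal{K}\boldsymbol{\psi},
\]
which proves bijectivity; the bijectivity of $\widehat{S}_{\overline{\boldsymbol{x}}}\widehat{S}_{\overline{\boldsymbol{x}}}^*+\gamma I$ is entirely analogous. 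From here \eqref{transidt1} follows, as in \eqref{transidt}, from the algebraic identity $(\widehat{S}_{\overline{\boldsymbol{x}}}^*\widehat{S}_{\overline{\boldsymbol{x}}}+\gamma I)\widehat{S}_{\overline{\boldsymbol{x}}}^*=\widehat{S}_{\overline{\boldsymbol{x}}}^*(\widehat{S}_{\overline{\boldsymbol{x}}}\widehat{S}_{\overline{\boldsymbol{x}}}^*+\gamma I)$ after composing with the two inverses.

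Finally, \eqref{eqre1} follows by applying the inverse formula above to $\widehat{S}_{\overline{\boldsymbol{x}}}^* c=c^T\mathcal{K}\boldsymbol{\psi}$ and using the telescoping $\mathcal{K}(\boldsymbol{\psi}, \boldsymbol{\psi})=(\mathcal{K}(\boldsymbol{\psi}, \boldsymbol{\psi})+\gamma I)-\gamma I$ to collapse the two resulting terms, exactly as in the derivation of \eqref{eqre0}. I do not anticipate a genuine obstacle here: the proof is a direct transcription of Lemma \ref{sxprop}'s argument. The sole point requiring care is that $\widehat{S}_{\overline{\boldsymbol{x}}}$ acts on the full ambient space $\mathcal{U}$ rather than on the subspace $\mathcal{U}_{Q}$, so one must consistently use the inner product and duality pairing of $\mathcal{U}$; no positive-definiteness beyond what $\mathcal{K}$ already provides is needed.
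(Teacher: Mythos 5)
Your proposal is correct and follows essentially the same route as the paper: the paper likewise derives \eqref{defShatxT} directly from the adjoint relation and then simply asserts that \eqref{transidt1} and \eqref{eqre1} hold ``by the same arguments as the proof of Lemma \ref{sxprop}'', which is precisely the transcription you carry out in detail (with $\mathcal{Q}$, $\mathcal{U}_Q$ replaced by $\mathcal{K}$, $\mathcal{U}$). Your observation that only positive semi-definiteness of $\mathcal{K}(\boldsymbol{\psi},\boldsymbol{\psi})$ plus $\gamma>0$ is needed for the inversions is exactly the point the paper relies on.
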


	\begin{proof}
		
		Since $\widehat{S}_{\overline{\boldsymbol{x}}}^*$ is the adjoint of $\widehat{S}_{\overline{\boldsymbol{x}}}$, for each $c\in \ell^2(\overline{\boldsymbol{x}})$, we have
		\begin{align*}
			\langle \widehat{S}^*_{\overline{\boldsymbol{x}}}c, u\rangle = \langle \widehat{S}_{\overline{\boldsymbol{x}}}u, c\rangle_{\ell^2(\overline{\boldsymbol{x}})} = c^T[\boldsymbol{\psi}, u] = \langle c^T\mathcal{K}(\boldsymbol{\psi}), u\rangle, \forall u\in \mathcal{U}. 
		\end{align*}
		Thus, we conclude \eqref{defShatxT}. 
		Meanwhile, for $\gamma>0$, by the same arguments as the proof of Lemma \ref{sxprop}, $\widehat{S}_{\overline{\boldsymbol{x}}}$ admits similar properties to those of $S_{\overline{\boldsymbol{x}}}$, i.e., \eqref{transidt1} and \eqref{eqre1} holds. 
	\end{proof}

Next, we give arguments for our main result, Theorem \ref{error_analy}. 
}
\begin{proof}[Proof of Theorem \ref{error_analy}]
	{\color{black}Let $\widehat{S}_{\overline{\boldsymbol{x}}}$ and $\widehat{S}_{\overline{\boldsymbol{x}}}^*$ be defined as in Lemma \ref{lmsopt}. To estimate the norm of $u^\dagger - u^*$, we introduce  intermediate functions,
		\begin{align}
			v:=&(\widehat{S}_{\overline{\boldsymbol{x}}}^*\widehat{S}_{\overline{\boldsymbol{x}}}+\gamma I)^{-1}\widehat{S}_{\overline{\boldsymbol{x}}}^*\boldsymbol{z}^\dagger, \label{defvft}\\
			\widehat{u}:=&(\widehat{S}_{\overline{\boldsymbol{x}}}^*\widehat{S}_{\overline{\boldsymbol{x}}}+\gamma I)^{-1}\widehat{S}_{\overline{\boldsymbol{x}}}^*\widehat{S}_{\overline{\boldsymbol{x}}}u^*. \label{defuft}
		\end{align}
		and 
		\begin{align}
			\label{defubar}
			\overline{u} := (\mathcal{K}\boldsymbol{\psi})^T\mathcal{K}(\boldsymbol{\psi}, \boldsymbol{\psi})^{-1}[\boldsymbol{\psi}, u^*]. 
		\end{align}
		Next, we use the triangle inequality and obtain
		\begin{align}
			\label{bduusttag}
			\|u^\dagger - u^*\|_{\mathcal{U}} \leq \|u^\dagger - v\|_{\mathcal{U}} + \|v - \widehat{u}|_{\mathcal{U}} + \|\widehat{u} - \overline{u}\|_{\mathcal{U}}+\|\overline{u} - u^*\|_{\mathcal{U}}.
		\end{align}
  The accuracy of the approximation of $\mathcal{Q}$ to $\mathcal{K}$ is measured by $\|u^\dagger-v\|_{\mathcal{U}}$, while $\|v-\widehat{u}\|_{\mathcal{U}}$ represents the error between the values of linear operators of $u^*$ and $\boldsymbol{z}^\dagger$. The effect of the nugget term $\gamma I$ is captured by $\|\widehat{u}-\overline{u}\|_{\mathcal{U}}$, and the extent to which the span of $\mathcal{K}\boldsymbol{\psi}$ can approximate the true solution $u^*$ is reflected in $\|\overline{u}-u^*\|_{\mathcal{U}}$.
		Next, we split the arguments for bounding terms at the right-hand side of \eqref{bduusttag} into different steps.

		\textbf{Step 1.} Here, we give {\color{black}an} upper bound for $\|u^\dagger - v\|_{\mathcal{U}}$, where $v$ is defined in \eqref{defvft}.}	Given $\gamma>0$, let $u^\dagger$ be as in \eqref{uoptda} and {\color{black}let} $\boldsymbol{z}^\dagger$ be a solution to \eqref{zsys}.  Then, we have
	\begin{align}
		\label{udaggerfsyss}
		\begin{cases}
			{S}_{\overline{\boldsymbol{x}}}^*{S}_{\overline{\boldsymbol{x}}}u^\dagger + \gamma u^\dagger = {S}_{\overline{\boldsymbol{x}}}^*\boldsymbol{z}^\dagger,\\
			\widehat{S}_{\overline{\boldsymbol{x}}}^*\widehat{S}_{\overline{\boldsymbol{x}}}v+\gamma v = \widehat{S}_{\overline{\boldsymbol{x}}}^*\boldsymbol{z}^\dagger.
		\end{cases}
	\end{align}
	which yields
	\begin{align}
		\label{ufrepre}
		\begin{cases}
			{S}_{\overline{\boldsymbol{x}}}u^\dagger=({S}_{\overline{\boldsymbol{x}}}{S}_{\overline{\boldsymbol{x}}}^*+\gamma I)^{-1}{S}_{\overline{\boldsymbol{x}}}{S}_{\overline{\boldsymbol{x}}}^*\boldsymbol{z}^\dagger,\\
			\widehat{S}_{\overline{\boldsymbol{x}}}v=(\widehat{S}_{\overline{\boldsymbol{x}}}\widehat{S}_{\overline{\boldsymbol{x}}}^*+\gamma I)^{-1}\widehat{S}_{\overline{\boldsymbol{x}}}\widehat{S}_{\overline{\boldsymbol{x}}}^*\boldsymbol{z}^\dagger.
		\end{cases}
	\end{align}
	Taking the difference of the two equations in \eqref{udaggerfsyss} and using \eqref{ufrepre}, we get
	\begin{align}
		\label{bdudaggerf}
		\gamma (u^\dagger - v) =& {S}_{\overline{\boldsymbol{x}}}^*(\boldsymbol{z}^\dagger-{S}_{\overline{\boldsymbol{x}}}u^\dagger)-\widehat{S}_{\overline{\boldsymbol{x}}}^*(\boldsymbol{z}^\dagger - \widehat{S}_{\overline{\boldsymbol{x}}}v)\nonumber\\
		=&\gamma {S}_{\overline{\boldsymbol{x}}}^*({S}_{\overline{\boldsymbol{x}}}{S}_{\overline{\boldsymbol{x}}}^*+\gamma I)^{-1}\boldsymbol{z}^\dagger-\gamma \widehat{S}_{\overline{\boldsymbol{x}}}^*(\widehat{S}_{\overline{\boldsymbol{x}}}\widehat{S}_{\overline{\boldsymbol{x}}}^*+\gamma I)^{-1}\boldsymbol{z}^\dagger.
	\end{align}
	By \eqref{transidt}, \eqref{eqre0}, \eqref{transidt1},  \eqref{eqre1}, and \eqref{bdudaggerf}, we have
	\begin{align*}
		\begin{split}
			u^\dagger - v =& ({S}_{\overline{\boldsymbol{x}}}^*{S}_{\overline{\boldsymbol{x}}}+\gamma I)^{-1}{S}_{\overline{\boldsymbol{x}}}^*\boldsymbol{z}^\dagger-(\widehat{S}_{\overline{\boldsymbol{x}}}^*\widehat{S}_{\overline{\boldsymbol{x}}}+\gamma I)^{-1}\widehat{S}_{\overline{\boldsymbol{x}}}^*\boldsymbol{z}^\dagger\\
			=& (\mathcal{Q}\boldsymbol{\psi})^T(\gamma I + \mathcal{Q}(\boldsymbol{\psi}, \boldsymbol{\psi}))^{-1}\boldsymbol{z}^\dagger - (\mathcal{K}\boldsymbol{\psi})^T(\gamma I + \mathcal{K}(\boldsymbol{\psi}, \boldsymbol{\psi}))^{-1}\boldsymbol{z}^\dagger.
		\end{split}
	\end{align*}
	{\color{black}Thus, we obtain}
	\begin{align}
		\label{bdudft1}
		\|u^\dagger - v\|^2_{\mathcal{U}} =& (\boldsymbol{z}^\dagger)^T(\gamma I + \mathcal{Q}(\boldsymbol{\psi}, \boldsymbol{\psi}))^{-1}\boldsymbol{z}^\dagger-(\boldsymbol{z}^\dagger)^T(\gamma I + \mathcal{K}(\boldsymbol{\psi}, \boldsymbol{\psi}))^{-1}\boldsymbol{z}^\dagger\nonumber\\
		&+2\gamma(\boldsymbol{z}^\dagger)^T(\gamma I + \mathcal{Q}(\boldsymbol{\psi}, \boldsymbol{\psi}))^{-1}(\gamma I + \mathcal{K}(\boldsymbol{\psi}, \boldsymbol{\psi}))^{-1}\boldsymbol{z}^\dagger\nonumber\\
		&-\gamma(\boldsymbol{z}^\dagger)^T(\gamma I + \mathcal{Q}(\boldsymbol{\psi}, \boldsymbol{\psi}))^{-2}\boldsymbol{z}^\dagger-\gamma(\boldsymbol{z}^\dagger)^T(\gamma I + \mathcal{K}(\boldsymbol{\psi}, \boldsymbol{\psi}))^{-2}\boldsymbol{z}^\dagger \nonumber\\
		\leq & 3\|(\gamma I + \mathcal{Q}(\boldsymbol{\psi}, \boldsymbol{\psi}))^{-1}-(\gamma I + \mathcal{K}(\boldsymbol{\psi}, \boldsymbol{\psi}))^{-1}\||\boldsymbol{z}^\dagger|^2,
	\end{align}
	where we use $\|(\gamma I + \mathcal{Q}(\boldsymbol{\psi}, \boldsymbol{\psi}))^{-1}\|\leq 1/\gamma$ and $\|(\gamma I + \mathcal{K}(\boldsymbol{\psi}, \boldsymbol{\psi}))^{-1}\|\leq 1/\gamma$ in the last inequality. 
	
	{\color{black} \textbf{Step 2}. Next, we bound $v-\widehat{u}$, where $\widehat{u}$ is given in \eqref{defuft}. Using \eqref{eqre1} and the definitions \eqref{defvft}-\eqref{defuft}, we get
		\begin{align}
			v =& (\mathcal{K}\boldsymbol{\psi})^T(\gamma I + \mathcal{K}(\boldsymbol{\psi}, \boldsymbol{\psi}))^{-1}\boldsymbol{z}^\dagger, \label{expuvexv}\\
			\widehat{u} = & (\mathcal{K}\boldsymbol{\psi})^T(\gamma I + \mathcal{K}(\boldsymbol{\psi}, \boldsymbol{\psi}))^{-1}[\boldsymbol{\psi}, u^*]. \label{expuvexu}
		\end{align}
		Thus, we have
		\begin{align*}
			\|v - \widehat{u}\|_{\mathcal{U}}^2 =& (\boldsymbol{z}^\dagger-[\boldsymbol{\psi}, u^*])^T(\gamma I + \mathcal{K}(\boldsymbol{\psi}, \boldsymbol{\psi}))^{-1}\mathcal{K}(\boldsymbol{\psi}, \boldsymbol{\psi})(\gamma I + \mathcal{K}(\boldsymbol{\psi}, \boldsymbol{\psi}))^{-1}(\boldsymbol{z}^\dagger - [\boldsymbol{\psi}, u^*])\nonumber\\
			=& (\boldsymbol{z}^\dagger - [\boldsymbol{\psi}, u^*])^T(\gamma I + \mathcal{K}(\boldsymbol{\psi}, \boldsymbol{\psi}))^{-1}(\boldsymbol{z}^\dagger - [\boldsymbol{\psi}, u^*])\nonumber\\
			&- \gamma (\boldsymbol{z}^\dagger-[\boldsymbol{\psi}, u^*])^T(\gamma I + \mathcal{K}(\boldsymbol{\psi}, \boldsymbol{\psi}))^{-2}(\boldsymbol{z}^\dagger - [\boldsymbol{\psi}, u^*])\nonumber\\
			\leq & (\boldsymbol{z}^\dagger - [\boldsymbol{\psi}, u^*])^T\mathcal{K}(\boldsymbol{\psi}, \boldsymbol{\psi})^{-1}(\boldsymbol{z}^\dagger - [\boldsymbol{\psi}, u^*]), 
		\end{align*}
		where the last inequality results from the fact that $\mathcal{K}(\boldsymbol{\psi}, \boldsymbol{\psi})^{-1} - (\gamma I + \mathcal{K}(\boldsymbol{\psi}, \boldsymbol{\psi}))^{-1}$ and $(\gamma I + \mathcal{K}(\boldsymbol{\psi}, \boldsymbol{\psi}))^{-2}$ are positive definite. 
		Hence, we obtain
		\begin{align}
			\label{bdlzlsusf}
			\begin{split}
				\|{\color{black}v-\widehat{u}}\|_{\mathcal{U}}\leq \sqrt{(\boldsymbol{z}^\dagger - [\boldsymbol{\psi}, u^*])^T\mathcal{K}(\boldsymbol{\psi}, \boldsymbol{\psi})^{-1}(\boldsymbol{z}^\dagger-[\boldsymbol{\psi}, u^*])}.
			\end{split}
		\end{align}
	}
	
	{\color{black}\textbf{Step 3}. Then, we estimate the norm of  $\widehat{u} - \overline{u}$. 
		Using \eqref{expuvexu}, we get
		\begin{align}
			\overline{u} - \widehat{u} =& (\mathcal{K}\boldsymbol{\psi})^T(\mathcal{K}(\boldsymbol{\psi}, \boldsymbol{\psi})^{-1}- (\gamma I + \mathcal{K}(\boldsymbol{\psi}, \boldsymbol{\psi}))^{-1})[\boldsymbol{\psi}, u^*]\nonumber \\
			=& \gamma (\mathcal{K}\boldsymbol{\psi})^T\mathcal{K}(\boldsymbol{\psi}, \boldsymbol{\psi})^{-1}(\gamma I + \mathcal{K}(\boldsymbol{\psi}, \boldsymbol{\psi}))^{-1}[\boldsymbol{\psi}, u^*] \label{umubar}
		\end{align}
		where we use $\mathcal{K}(\boldsymbol{\psi}, \boldsymbol{\psi})^{-1} - (\gamma I + \mathcal{K}(\boldsymbol{\psi}, \boldsymbol{\psi}))^{-1} = \gamma \mathcal{K}(\boldsymbol{\psi}, \boldsymbol{\psi})^{-1}(\gamma I + \mathcal{K}(\boldsymbol{\psi}, \boldsymbol{\psi}))^{-1}$ in the last equality. 
		Thus, using \eqref{umubar} and the fact that $\mathcal{K}(\boldsymbol{\psi}, \boldsymbol{\psi})^{-3} - (\gamma I + \mathcal{K}(\boldsymbol{\psi}, \boldsymbol{\psi}))^{-1}\mathcal{K}(\boldsymbol{\psi}, \boldsymbol{\psi})^{-1}(\gamma I + \mathcal{K}(\boldsymbol{\psi}, \boldsymbol{\psi}))^{-1}$ is positive definite, we have
		\begin{align}
			\|\widehat{u} - \overline{u}\|^2_{\mathcal{U}} =& \gamma^2 [\boldsymbol{\psi}, u^*]^T(\gamma I + \mathcal{K}(\boldsymbol{\psi}, \boldsymbol{\psi}))^{-1}\mathcal{K}(\boldsymbol{\psi}, \boldsymbol{\psi})^{-1}(\gamma I + \mathcal{K}(\boldsymbol{\psi}, \boldsymbol{\psi}))^{-1}[\boldsymbol{\psi}, u^*]\nonumber\\
			\leq & \gamma^2 [\boldsymbol{\psi}, u^*]^T\mathcal{K}(\boldsymbol{\psi}, \boldsymbol{\psi})^{-3}[\boldsymbol{\psi}, u^*]. \label{fteq}
		\end{align}

		\textbf{Step 4.} In the last step, we bound $\overline{u} - u^*$, where $\overline{u}$ is given in \eqref{defubar}. We observe that $\overline{u}$ is the projection of $u^*$ onto the span of $\mathcal{K}\boldsymbol{\psi}$ \cite[Theorem 12.3]{owhadi2019operator}. Thus, 
		\begin{align}
			\label{inespest}
			\|u^* - \overline{u}\|_{\mathcal{U}} = \inf_{\psi\in \operatorname{span}(\boldsymbol{\psi})}\|u^* -  \mathcal{K}\psi\|_{\mathcal{U}}
		\end{align}
		Therefore, \eqref{bduusttag}, \eqref{bdudft1}, \eqref{bdlzlsusf}, \eqref{fteq}, and \eqref{inespest} imply that \eqref{bderror} holds. 
	}
\end{proof}

\section{Proofs for the Nystr\"{o}m Approximation Error}
\label{prfNSE}
{\color{black}In this section, we give a detailed proof for estimating the upper bound for $\mathcal{E}(\mathcal{H}_a)$}. For simplification, we write $\boldsymbol{\psi}:=(\boldsymbol{\psi}_\Omega, \boldsymbol{\psi}_{\partial \Omega})$, where $\boldsymbol{\psi}_\Omega$ represents the collection of linear operators for interior points and $\boldsymbol{\psi}_{\partial \Omega}$ consists of linear operators for boundary samples. We write $\overline{\boldsymbol{x}}:=(\overline{\boldsymbol{x}}_1, \overline{\boldsymbol{x}}_2)$, where we recall that $\overline{\boldsymbol{x}}$ contains all the sample points,  $\overline{\boldsymbol{x}}_1$ is the set of interior points in $\Omega$, and $\overline{\boldsymbol{x}}_2$ stands for the collection of boundary points.  
We define the operators $L_{\overline{\boldsymbol{x}}_1}$ and $L_{\overline{\boldsymbol{x}}_2}$  such that for any function $v\in \mathcal{U}$, 
\begin{align}
	\label{defLpsi}
	L_{\overline{\boldsymbol{x}}_1}[v]=\frac{1}{N_\Omega}(\mathcal{K}\boldsymbol{\psi}_{\Omega})^T[\boldsymbol{\psi}_{\Omega}, v] \text{ and } L_{\overline{\boldsymbol{x}}_2}[v]=\frac{1}{N_{\partial\Omega}}(\mathcal{K}\boldsymbol{\psi}_{\partial\Omega})^T[\boldsymbol{\psi}_{\partial\Omega}, v],
\end{align}
where $N_\Omega$ is the size of $\overline{\boldsymbol{x}}_1$ and $N_{\partial \Omega}$ is the length of $\overline{\boldsymbol{x}}_2$. The next proposition connects the eigenvalues and the eigenfunctions of $L_{\overline{\boldsymbol{x}}_1}$ and $L_{\overline{\boldsymbol{x}}_2}$ to the eigenvalues and the eigenvectors of $\mathcal{K}(\boldsymbol{\psi}_\Omega, \boldsymbol{\psi}_\Omega)$ and $\mathcal{K}(\boldsymbol{\psi}_{\partial \Omega}, \boldsymbol{\psi}_{\partial \Omega})$. 
\begin{pro}
	\label{kbasics}
	Let $R_1=N_\Omega(D-D_b)$ and $R_2=(N-N_\Omega)D_b$, where $D$ and $D_b$ are given in \eqref{pdegrf}. 
	Let $\{\lambda_j\}_{j=1}^{R_1}$ be the set of eigenvalues of $\mathcal{K}(\boldsymbol{\psi}_\Omega, \boldsymbol{\psi}_\Omega)$, which are ranked in the descending order.  Define the operator $L_{\overline{\boldsymbol{x}}_1}$ as in \eqref{defLpsi}. Then, the eigenvalues of $L_{\overline{\boldsymbol{x}}_1}$ are $\{\lambda_j/N_\Omega\}_{j=1}^{R_1}$. Moreover, let $\varphi_1, \dots, \varphi_{R_1}$ be the corresponding eigenfunctions of $L_{\overline{\boldsymbol{x}}_1}$ that have normalized functional norms, i.e. $\langle \varphi_i, \varphi_j\rangle = \delta_{ij}$, $1\leq i, j\leq R_1$. Let $V=[\boldsymbol{v}_1, \dots,  \boldsymbol{v}_{R_1}]$ be the orthonormal eigenvector matrix of $\mathcal{K}(\boldsymbol{\psi}_\Omega, \boldsymbol{\psi}_\Omega)$. Then, we have
	\begin{align}
		\label{defvj}
		\sqrt{\lambda_j}\boldsymbol{v}_j=[\boldsymbol{\psi}_{\Omega}, \varphi_j], \forall 1\leq j \leq R_1, 
	\end{align}
	\begin{align}
		\label{varphiknrpre}
		\sqrt{\lambda_j}\varphi_j = \boldsymbol{v}_j^T\mathcal{K}\boldsymbol{\psi}_{\Omega}, \forall 1\leq j \leq R_1, 
	\end{align}
	\begin{align}
		\label{knrvarphipre}
		\mathcal{K}\boldsymbol{\psi}_\Omega = V[\sqrt{\lambda_1}\varphi_1,\dots,\sqrt{\lambda_{R_1}}\varphi_{R_1}]^T,  
	\end{align}
	and 
	\begin{align}
		\label{eqphikphi}
		\mathcal{K}(\psi_{\Omega, j}, \psi_{\Omega, j}) = \sum_{i=1}^{R_1}[\psi_{\Omega, j}, \varphi_i]^2, \forall 1\le j\leq R_1, 
	\end{align}
	where $\boldsymbol{\psi}_{\Omega, j}$ is the $j^{\textit{th}}$ component of $\boldsymbol{\psi}_\Omega$. 
	Similarly, let $\{\tau_j\}_{j=1}^{R_2}$ be the set of eigenvalues of $\mathcal{K}(\boldsymbol{\psi}_{\partial \Omega}, \boldsymbol{\psi}_{\partial \Omega})$ sorted in the descending order and let $L_{\overline{\boldsymbol{x}}_2}$ be as in \eqref{defLpsi}. Then, the eigenvalues of $L_{\overline{\boldsymbol{x}}_2}$ are $\{\tau_j/N_{\partial \Omega}\}_{j=1}^{R_2}$. Meanwhile, let $\zeta_1, \dots, \zeta_{R_2}$ be the eigenfunctions of $L_{\overline{\boldsymbol{x}}_2}$ that have normalized functional norms. Let $W=[\boldsymbol{w}_1, \dots,  \boldsymbol{w}_{R_2}]$ be the orthonormal eigenvector matrix of $\mathcal{K}(\boldsymbol{\psi}_{\partial\Omega}, \boldsymbol{\psi}_{\partial \Omega})$. Then, we obtain 
	\begin{align}
		\label{defzetaj}
		\sqrt{\tau_j}\boldsymbol{w}_j=[\boldsymbol{\psi}_{\partial \Omega}, \zeta_j], \forall 1\leq j \leq R_2, 
	\end{align}
	\begin{align}
		\label{varpsiknrpre}
		\sqrt{\tau_j}\zeta_j = \boldsymbol{w}_j^T\mathcal{K}\boldsymbol{\psi}_{\partial\Omega}, \forall 1\leq j \leq R_2, 
	\end{align}
	\begin{align}
		\label{knrvarpsipre}
		\mathcal{K}\boldsymbol{\psi}_{\partial\Omega} = W[\sqrt{\tau_1}\zeta_1,\dots,\sqrt{\tau_{R_2}}\zeta_{R_2}]^T,  
	\end{align}
	and 
	\begin{align}
		\label{eqphikpsi}
		\mathcal{K}(\psi_{\partial \Omega, j}, \psi_{\partial \Omega, j}) = \sum_{i=1}^{R_2}[\psi_{\partial \Omega, j}, \zeta_i]^2, \forall 1\leq j\leq R_2, 
	\end{align}
	where $\boldsymbol{\psi}_{\partial \Omega, j}$ is the $j^{\textit{th}}$ element of $\boldsymbol{\psi}_{\partial \Omega}$. 
\end{pro}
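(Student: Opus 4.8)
The plan is to recognize $L_{\overline{\boldsymbol{x}}_1}$ as a self-adjoint finite-rank operator on $\mathcal{U}$ whose spectral data is entirely governed by the Gram matrix $\mathcal{K}(\boldsymbol{\psi}_\Omega, \boldsymbol{\psi}_\Omega)$, and then to read off the four identities from the correspondence between the eigendecomposition of a finite-rank operator and that of its associated Gram matrix. The same argument then transfers verbatim to $L_{\overline{\boldsymbol{x}}_2}$.

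First I would record the reproducing identity. Since $\langle u, v\rangle = [\mathcal{K}^{-1}u, v]$, setting $u = \mathcal{K}\phi$ gives $\langle \mathcal{K}\phi, v\rangle = [\phi, v]$ for all $\phi\in\mathcal{U}^*$ and $v\in\mathcal{U}$. In particular $\langle \mathcal{K}\psi_{\Omega,i}, \mathcal{K}\psi_{\Omega,j}\rangle = [\psi_{\Omega,i}, \mathcal{K}\psi_{\Omega,j}] = \mathcal{K}(\psi_{\Omega,i}, \psi_{\Omega,j})$, so $\mathcal{K}(\boldsymbol{\psi}_\Omega, \boldsymbol{\psi}_\Omega)$ is exactly the Gram matrix of the functions $\{\mathcal{K}\psi_{\Omega,j}\}_{j=1}^{R_1}$ in $\mathcal{U}$. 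Feeding this identity into \eqref{defLpsi} rewrites $L_{\overline{\boldsymbol{x}}_1}[v] = \frac{1}{N_\Omega}\sum_{j=1}^{R_1}(\mathcal{K}\psi_{\Omega,j})\langle \mathcal{K}\psi_{\Omega,j}, v\rangle$, which exhibits $L_{\overline{\boldsymbol{x}}_1}$ as $1/N_\Omega$ times a sum of rank-one self-adjoint positive operators, hence a self-adjoint finite-rank operator with range in $\operatorname{span}\{\mathcal{K}\psi_{\Omega,j}\}$.

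The key step is to transfer the decomposition $\mathcal{K}(\boldsymbol{\psi}_\Omega, \boldsymbol{\psi}_\Omega) = V\operatorname{diag}(\lambda_1,\dots,\lambda_{R_1})V^T$ to $L_{\overline{\boldsymbol{x}}_1}$. For each $\lambda_j>0$ I would define $\varphi_j := \lambda_j^{-1/2}\boldsymbol{v}_j^T\mathcal{K}\boldsymbol{\psi}_\Omega$, which is precisely \eqref{varphiknrpre}. Using the Gram identity, $\langle \varphi_i, \varphi_j\rangle = \lambda_i^{-1/2}\lambda_j^{-1/2}\boldsymbol{v}_i^T\mathcal{K}(\boldsymbol{\psi}_\Omega, \boldsymbol{\psi}_\Omega)\boldsymbol{v}_j = \delta_{ij}$, so the $\varphi_j$ are orthonormal, and $[\psi_{\Omega,k}, \varphi_j] = \langle \mathcal{K}\psi_{\Omega,k}, \varphi_j\rangle = \lambda_j^{-1/2}(\mathcal{K}(\boldsymbol{\psi}_\Omega, \boldsymbol{\psi}_\Omega)\boldsymbol{v}_j)_k = \sqrt{\lambda_j}(\boldsymbol{v}_j)_k$, which is \eqref{defvj}. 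Substituting \eqref{defvj} into the rank-one expansion gives $L_{\overline{\boldsymbol{x}}_1}\varphi_j = \frac{\sqrt{\lambda_j}}{N_\Omega}\boldsymbol{v}_j^T\mathcal{K}\boldsymbol{\psi}_\Omega = \frac{\lambda_j}{N_\Omega}\varphi_j$, establishing the eigenvalue claim. Identity \eqref{knrvarphipre} then follows by stacking \eqref{varphiknrpre} into $V^T\mathcal{K}\boldsymbol{\psi}_\Omega = [\sqrt{\lambda_1}\varphi_1,\dots,\sqrt{\lambda_{R_1}}\varphi_{R_1}]^T$ and left-multiplying by $V$ with $VV^T=I$; and \eqref{eqphikphi} follows from \eqref{defvj} since $\sum_i[\psi_{\Omega,j},\varphi_i]^2 = \sum_i\lambda_i(\boldsymbol{v}_i)_j^2 = (V\operatorname{diag}(\lambda)V^T)_{jj} = \mathcal{K}(\psi_{\Omega,j},\psi_{\Omega,j})$.

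Finally, the statements for $L_{\overline{\boldsymbol{x}}_2}$, $\tau_j$, $\boldsymbol{w}_j$, and $\zeta_j$ are proved word for word with $\boldsymbol{\psi}_{\partial\Omega}$ in place of $\boldsymbol{\psi}_\Omega$. I expect the only real subtlety to be the treatment of vanishing eigenvalues: if some $\lambda_j=0$ the normalization $\lambda_j^{-1/2}$ is undefined, but then $\boldsymbol{v}_j^T\mathcal{K}(\boldsymbol{\psi}_\Omega, \boldsymbol{\psi}_\Omega)\boldsymbol{v}_j=0$ forces $\boldsymbol{v}_j^T\mathcal{K}\boldsymbol{\psi}_\Omega=0$ in $\mathcal{U}$, so one completes $\{\varphi_j:\lambda_j>0\}$ to an orthonormal system by any orthonormal basis of $\ker L_{\overline{\boldsymbol{x}}_1}$, and all four identities remain valid with the convention $\sqrt{\lambda_j}=0$. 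In the nondegenerate regime where the functionals in $\boldsymbol{\psi}_\Omega$ are linearly independent, $\mathcal{K}(\boldsymbol{\psi}_\Omega,\boldsymbol{\psi}_\Omega)$ is positive definite and this bookkeeping disappears. The algebraic identities themselves are immediate once the Gram-matrix / rank-one-operator correspondence of the first two paragraphs is in place.
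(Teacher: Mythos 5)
Your proof is correct, and it rests on the same underlying fact as the paper's: the spectral data of the finite-rank operator $L_{\overline{\boldsymbol{x}}_1}$ and of the Gram matrix $\mathcal{K}(\boldsymbol{\psi}_\Omega,\boldsymbol{\psi}_\Omega)$ are in bijection via the reproducing identity $\langle \mathcal{K}\phi, v\rangle=[\phi,v]$. The only real difference is the direction in which you run the correspondence. The paper starts from an assumed eigenpair $(\widetilde{\lambda}_j,\varphi_j)$ of $L_{\overline{\boldsymbol{x}}_1}$, applies $\boldsymbol{\psi}_\Omega$ to the eigenvalue equation to produce an eigenvector $[\boldsymbol{\psi}_\Omega,\varphi_j]$ of the Gram matrix, and then deduces $|[\boldsymbol{\psi}_\Omega,\varphi_j]|=\sqrt{\lambda_j}$ from the normalization of $\varphi_j$; you instead start from the eigendecomposition $V\operatorname{diag}(\lambda)V^T$ of the Gram matrix and define $\varphi_j:=\lambda_j^{-1/2}\boldsymbol{v}_j^T\mathcal{K}\boldsymbol{\psi}_\Omega$, verifying orthonormality and the eigenvalue equation directly. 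Your direction is constructive: it proves the existence of the orthonormal eigensystem of $L_{\overline{\boldsymbol{x}}_1}$ rather than presupposing it, and it forces you to confront the degenerate case $\lambda_j=0$, which you resolve correctly (the paper's step $\lambda_j|[\boldsymbol{\psi}_\Omega,\varphi_j]|^2=\lambda_j^2\Rightarrow|[\boldsymbol{\psi}_\Omega,\varphi_j]|=\sqrt{\lambda_j}$ is vacuous when $\lambda_j=0$, so the identification of eigenvectors silently assumes nondegeneracy there). The four displayed identities are then obtained by the same algebra in both proofs, so the difference is one of bookkeeping and rigor at the zero eigenvalues rather than of substance.
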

\begin{proof}
	We only give proofs related to $L_{\overline{\boldsymbol{x}}_1}$ since the arguments for $L_{\overline{\boldsymbol{x}}_2}$ are similar. Let $\widetilde{\lambda}_j$ be $j^{\textit{th}}$ eigenvalue of $L_{\overline{\boldsymbol{x}}_1}$ and let $\varphi_j$ be the corresponding eigenfunction with a normalized norm. Then, we have
	\begin{align}
		\label{eigencor}
		L_{\overline{\boldsymbol{x}}_1}[\varphi_j]=\frac{1}{N_\Omega}(\mathcal{K}\boldsymbol{\psi}_{\Omega})^T[\boldsymbol{\psi}_{\Omega}, \varphi_j]=\widetilde{\lambda}_j\varphi_j. 
	\end{align}
	Acting $\boldsymbol{\psi}_{\Omega}$ on both sides of \eqref{eigencor}, we get
	\begin{align}
		\label{kppeig}
		\mathcal{K}(\boldsymbol{\psi}_{\Omega}, \boldsymbol{\psi}_{\Omega})[\boldsymbol{\psi}_{\Omega}, \varphi_j]=N_\Omega\widetilde{\lambda}_j[\boldsymbol{\psi}_\Omega, \varphi_j],
	\end{align}
	which implies that $N_\Omega\widetilde{\lambda}_j$ is the $j^{\textit{th}}$ eigenvalue of $\mathcal{K}(\boldsymbol{\psi}_{\Omega}, \boldsymbol{\psi}_{\Omega})$ and $[\boldsymbol{\psi}_\Omega, \varphi_j]$ is the corresponding eigenvector. Hence, we confirm that if $\{\lambda_j\}_{j=1}^{R_1}$ contains the eigenvalues of $\mathcal{K}(\boldsymbol{\psi}_{\Omega}, \boldsymbol{\psi}_\Omega)$,  $\{\lambda_j/N_\Omega\}_{j=1}^{R_1}$ is the set of the eigenvalues of $L_{\overline{\boldsymbol{x}}_1}$. Furthermore, let $V=[\boldsymbol{v}_1, \dots,  \boldsymbol{v}_{R_1}]$ be the orthonormal eigenvector matrix of $\mathcal{K}(\boldsymbol{\psi}_\Omega, \boldsymbol{\psi}_\Omega)$. Then, \eqref{kppeig} implies that
	\begin{align}
		\label{eigKpsisps}
		\boldsymbol{v}_j =\frac{[\boldsymbol{\psi}_\Omega, \varphi_j]}{|[\boldsymbol{\psi}_\Omega, \varphi_j]|}.
	\end{align}
	From \eqref{eigencor}, we get
	\begin{align}
		\label{kldvp}
		(\mathcal{K}\boldsymbol{\psi}_{\Omega})^T[\boldsymbol{\psi}_\Omega, \varphi_j]=\lambda_j\varphi_j, 
	\end{align}
	which yields
	\begin{align}
		\label{varphidd}
		\langle (\mathcal{K}\boldsymbol{\psi}_{\Omega})^T[\boldsymbol{\psi}_\Omega, \varphi_j], (\mathcal{K}\boldsymbol{\psi}_{\Omega})^T[\boldsymbol{\psi}_\Omega, \varphi_j]\rangle =\lambda_j^2\langle \varphi_j, \varphi_j\rangle. 
	\end{align}
	Since $\varphi_j$ is normalized, we obtain from \eqref{kldvp} and  \eqref{varphidd} that
	\begin{align*}
		\lambda_j|[\boldsymbol{\psi}_\Omega, \varphi_j]|^2=[\boldsymbol{\psi}_\Omega, \varphi_j]^T\mathcal{K}(\boldsymbol{\psi}_{\Omega}, \boldsymbol{\psi}_{\Omega})[\boldsymbol{\psi}_\Omega, \varphi_j]=\lambda_j^2,
	\end{align*}
	which gives
	\begin{align}
		\label{ldj}
		|[\boldsymbol{\psi}_\Omega, \varphi_j]| = \sqrt{\lambda_j}. 
	\end{align}
	Hence, \eqref{eigKpsisps} and \eqref{ldj} imply that $\sqrt{\lambda_j}\boldsymbol{v}_j=[\boldsymbol{\psi}_\Omega, \varphi_j]$, which confirms \eqref{defvj}. 
	Then, the equality \eqref{varphiknrpre}  follows directly from \eqref{kldvp} and \eqref{defvj}. Meanwhile, \eqref{knrvarphipre} results from \eqref{varphiknrpre} and the fact that $V$ is orthonormal. 
	
	Let $\boldsymbol{\psi}_{\Omega, j}$ be the $j^{\textit{th}}$ element of $\boldsymbol{\psi}_\Omega$. Denote by $V_{ji}$ the component of $V$ at the $j^{\textit{th}}$ row and the $i^{\textit{th}}$ column. From \eqref{knrvarphipre}, we get
	\begin{align}
		\label{kphieq}
		\mathcal{K}\boldsymbol{\psi}_{\Omega, j} = \sum_{i=1}^{R_1}V_{ji}\sqrt{\lambda_i}\varphi_i.
	\end{align}
	Thus, we get from \eqref{kphieq} that
	\begin{align*}
		[\boldsymbol{\psi}_{\Omega, j}, \mathcal{K}\boldsymbol{\psi}_{\Omega, j}] = \langle \mathcal{K}\boldsymbol{\psi}_{\Omega, j}, \mathcal{K}\boldsymbol{\psi}_{\Omega, j}\rangle = \sum_{i=1}^{R_1}\lambda_iV_{ji}^2=\sum_{i=1}^{R_1}[\boldsymbol{\psi}_{\Omega, j}, \varphi_i]^2,
	\end{align*}
	where the last equality follows \eqref{defvj}. Thus, we conclude \eqref{eqphikphi}. 
\end{proof}
Using \eqref{knrvarphipre} and \eqref{knrvarpsipre}, $\mathcal{H}_b$ in \eqref{defsetsns} can be rewritten in the basis of the eigenfunctions $\{\varphi_i\}_{i=1}^{{R_1}}$ and $\{\zeta_i\}_{i=1}^{{R_2}}$, i.e.,
\begin{align}
	\label{deHbeig}
	\mathcal{H}_b=\bigg\{f=\sum_{i=1}^{R_1}b_{1,i}\sqrt{\lambda_i}\varphi_i + \sum_{i=1}^{R_2}b_{2,i}\sqrt{\tau_i}\zeta_i, \sum_{i=1}^{R_1}b_{1,i}^2+\sum_{i=1}^{R_2}b_{2,i}^2\leq 1\bigg\}. 
\end{align}
For any $\boldsymbol{r}:=(r_1, r_2)\in [R_1]\times[R_2]$, we define
\begin{align}
	\label{sbuspaces}
	\begin{split}
		\mathcal{H}_b^{\boldsymbol{r}}=&\bigg\{f=\sum_{i=1}^{r_1}b_{1,i}\sqrt{\lambda_i}\varphi_i+\sum_{i=1}^{r_2}b_{2,i}\sqrt{\tau_i}\zeta_i, \sum_{i=1}^{{r}}b_{1,i}^2+b_{2,i}^2\leq 1\bigg\},\\
		\overline{\mathcal{H}}_b^{\boldsymbol{r}}=&\bigg\{f=\sum_{i=1}^{R_1-r_1}b_{1,i}\sqrt{\lambda_{i+r_1}}\varphi_{i+r_1}+\sum_{i=1}^{R_2-r_2}b_{2,i}\sqrt{\tau_{i+r_2}}\zeta_{i+r_2}, \sum_{i=1}^{R_1-r_1}b_{1,i}^2+\sum_{i=1}^{R_2-r_2}b_{2,i}^2\leq 1\bigg\}. 
	\end{split}
\end{align}
Define 
\begin{align}
	\label{defEhar}
	\mathcal{E}(\mathcal{H}_a, \boldsymbol{r})=\max_{h\in \mathcal{H}_b^{\boldsymbol{r}}}\mathcal{E}(h, \mathcal{H}_a),
\end{align}
which is the worst error in approximating any $h\in \mathcal{H}_b^{\boldsymbol{r}}$ by functions in $\mathcal{H}_a$.  The next proposition bounds $\mathcal{E}(\mathcal{H}_a)$ by $\mathcal{E}(\mathcal{H}_a, \boldsymbol{r})$. 
\begin{pro}
	\label{probdeha}
	Let  $\boldsymbol{r}:=(r_1, r_2)\in [R_1]\times[R_2]$ and  $\mathcal{E}(\mathcal{H}_a, \boldsymbol{r})$ be as in \eqref{defEhar}. 
	Under the conditions of Proposition \ref{kbasics}, 
	for any $\boldsymbol{r}\in [R_1]\times[R_2]$, we have
	\begin{align}
		\label{eqbdeha}
		\mathcal{E}(\mathcal{H}_a)\leq 2\mathcal{E}(\mathcal{H}_a, \boldsymbol{r})+4\lambda_{r_1+1}+4\tau_{r_2+1},
	\end{align}
	with the convention that $\lambda_{R_1+1}=0$ and $\tau_{R_2+1}=0$.  
\end{pro}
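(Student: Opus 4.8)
The plan is to exploit the fact that, because $\mathcal{H}_a=\operatorname{span}\{\mathcal{K}\boldsymbol{\phi}\}$ is a finite-dimensional linear subspace of the Hilbert space $\mathcal{U}$, the quantity $d(h):=\sqrt{\mathcal{E}(h,\mathcal{H}_a)}=\operatorname{dist}(h,\mathcal{H}_a)$ is a \emph{seminorm} on $\mathcal{U}$: writing $P^{\perp}$ for the orthogonal projection onto the orthogonal complement of $\mathcal{H}_a$, we have $d(h)=\|P^{\perp}h\|_{\mathcal{U}}$, which is subadditive and positively homogeneous. This reduces the statement to an elementary triangle-inequality argument once a generic element of $\mathcal{H}_b$ is split into a low-rank ``head'' and a ``tail''.

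First I would fix $h\in\mathcal{H}_b$ and use the eigenbasis representation \eqref{deHbeig}, writing $h=\sum_{i=1}^{R_1}b_{1,i}\sqrt{\lambda_i}\varphi_i+\sum_{i=1}^{R_2}b_{2,i}\sqrt{\tau_i}\zeta_i$ with $\sum_i b_{1,i}^2+\sum_i b_{2,i}^2\leq1$. I would then split $h=h_0+h_\Omega+h_{\partial\Omega}$, where $h_0=\sum_{i=1}^{r_1}b_{1,i}\sqrt{\lambda_i}\varphi_i+\sum_{i=1}^{r_2}b_{2,i}\sqrt{\tau_i}\zeta_i$ is the head, and $h_\Omega=\sum_{i=r_1+1}^{R_1}b_{1,i}\sqrt{\lambda_i}\varphi_i$, $h_{\partial\Omega}=\sum_{i=r_2+1}^{R_2}b_{2,i}\sqrt{\tau_i}\zeta_i$ are the interior and boundary tails. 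Since the retained coefficients satisfy $\sum_{i=1}^{r_1}b_{1,i}^2+\sum_{i=1}^{r_2}b_{2,i}^2\leq1$, the head $h_0$ lies in $\mathcal{H}_b^{\boldsymbol{r}}$, so by definition $d(h_0)^2=\mathcal{E}(h_0,\mathcal{H}_a)\leq\mathcal{E}(\mathcal{H}_a,\boldsymbol{r})$.

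Next I would bound the tails by the trivial choice $0\in\mathcal{H}_a$, i.e. $d(h_\Omega)\leq\|h_\Omega\|_{\mathcal{U}}$ and $d(h_{\partial\Omega})\leq\|h_{\partial\Omega}\|_{\mathcal{U}}$. Here I use that Proposition \ref{kbasics} gives $\langle\varphi_i,\varphi_j\rangle=\delta_{ij}$ within the interior group and $\langle\zeta_i,\zeta_j\rangle=\delta_{ij}$ within the boundary group, together with the descending ordering of the $\lambda_i$ and $\tau_i$, to get
\[
\|h_\Omega\|_{\mathcal{U}}^2=\sum_{i=r_1+1}^{R_1}b_{1,i}^2\lambda_i\leq\lambda_{r_1+1}\sum_{i=r_1+1}^{R_1}b_{1,i}^2\leq\lambda_{r_1+1},
\]
and similarly $\|h_{\partial\Omega}\|_{\mathcal{U}}^2\leq\tau_{r_2+1}$. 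Subadditivity of $d$ then yields $d(h)\leq d(h_0)+d(h_\Omega)+d(h_{\partial\Omega})\leq d(h_0)+\sqrt{\lambda_{r_1+1}}+\sqrt{\tau_{r_2+1}}$; squaring with $(a+b)^2\leq2a^2+2b^2$ and $2\sqrt{\lambda_{r_1+1}\tau_{r_2+1}}\leq\lambda_{r_1+1}+\tau_{r_2+1}$ gives $d(h)^2\leq2\mathcal{E}(\mathcal{H}_a,\boldsymbol{r})+4\lambda_{r_1+1}+4\tau_{r_2+1}$. Taking the maximum over $h\in\mathcal{H}_b$ proves \eqref{eqbdeha}, with the conventions $\lambda_{R_1+1}=\tau_{R_2+1}=0$ covering the degenerate indices $r_1=R_1$ or $r_2=R_2$.

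The main obstacle—really the only subtlety—is that the interior eigenfunctions $\{\varphi_i\}$ and the boundary eigenfunctions $\{\zeta_i\}$ are orthonormal only \emph{within} their own groups and need not be mutually orthogonal, since they arise from the two distinct operators $L_{\overline{\boldsymbol{x}}_1}$ and $L_{\overline{\boldsymbol{x}}_2}$. Consequently $\|h_\Omega+h_{\partial\Omega}\|_{\mathcal{U}}^2$ does not split as $\|h_\Omega\|_{\mathcal{U}}^2+\|h_{\partial\Omega}\|_{\mathcal{U}}^2$, so I must keep the interior and boundary tails separate and apply the seminorm triangle inequality to each; this extra application is exactly what turns the naive constant $2$ into the constant $4$ appearing in \eqref{eqbdeha}.
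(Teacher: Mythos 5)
Your proof is correct and follows essentially the same route as the paper's: split $h\in\mathcal{H}_b$ into a head lying in $\mathcal{H}_b^{\boldsymbol{r}}$ plus an eigenvalue-controlled tail, bound the head's approximation error by $\mathcal{E}(\mathcal{H}_a,\boldsymbol{r})$ and the tail by $\lambda_{r_1+1}$ and $\tau_{r_2+1}$, and combine with $(a+b)^2\leq 2a^2+2b^2$. The only difference is bookkeeping: you apply the triangle inequality to the distance seminorm before squaring and keep the interior and boundary tails separate (making the non-orthogonality issue explicit), whereas the paper works directly with squared quantities and a single combined tail; both yield identical constants.
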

\begin{proof}
	Let $\mathcal{H}_b^{\boldsymbol{r}}$ and  $\overline{\mathcal{H}}_b^{\boldsymbol{r}}$ be defined in \eqref{sbuspaces}. For any $h\in \mathcal{H}_b$, there exist  $h_1\in \mathcal{H}_b^{\boldsymbol{r}}$ and  $h_2\in \overline{\mathcal{H}}_b^{\boldsymbol{r}}$ such that $h=h_1+h_2$. Thus, we have
	\begin{align*}
		\begin{split}
			\mathcal{E}(\mathcal{H}_a) =& \max_{\substack{h_1\in \mathcal{H}_b^{\boldsymbol{r}}\\ h_2\in \overline{\mathcal{H}}_b^{\boldsymbol{r}}}}\min_{v\in \mathcal{H}_a}\|v-h_1-h_2\|^2_{\mathcal{U}}\\
			\leq& 2\max_{h_1\in \mathcal{H}_b^{\boldsymbol{r}}}\min_{v\in \mathcal{H}_a}\|v-h_1\|^2_{\mathcal{U}}+2\max_{h_2\in \overline{\mathcal{H}}_b^{\boldsymbol{r}}}\|h_2\|^2_{\mathcal{U}} \leq  2\mathcal{E}(\mathcal{H}_a, \boldsymbol{r}) + 4 \lambda_{r_1+1}+4\tau_{r_2+1}, 
		\end{split}
	\end{align*}
	which concludes \eqref{eqbdeha}. 
\end{proof}
The above proposition implies that it is sufficient to bound $ \mathcal{E}(\mathcal{H}_a, \boldsymbol{r}) $ in order to estimate the Nystr\"{o}m approximation error. 
Let $\boldsymbol{\phi}$ be as in \eqref{defindiline}. For clarification, we denote by $\boldsymbol{\phi}:=(\boldsymbol{\phi}_\Omega, \boldsymbol{\phi}_{\partial \Omega})$, where $\boldsymbol{\phi}_\Omega$ represents the set of linear operators for interior points and $\boldsymbol{\phi}_{\partial \Omega}$ includes boundary linear operators. Let $\widehat{\boldsymbol{x}}:=(\widehat{\boldsymbol{x}}_1, \widehat{\boldsymbol{x}}_2)$ be as in Assumption \ref{hypunisp}, let $M_\Omega$ be the size of $\widehat{\boldsymbol{x}}_1$, and let $M_{\partial \Omega}$ be the length of $\widehat{\boldsymbol{x}}_2$. 
We define the operators $L_{\widehat{\boldsymbol{x}}_1}$ and $L_{\widehat{\boldsymbol{x}}_2}$  such that for any function $v\in \mathcal{U}$, 
\begin{align}
	\label{defLindp}
	L_{\widehat{\boldsymbol{x}}_1}[v]=\frac{1}{M_\Omega}(\mathcal{K}\boldsymbol{\phi}_{\Omega})^T[\boldsymbol{\phi}_{\Omega}, v] \text{ and } L_{\widehat{\boldsymbol{x}}_2}[v]=\frac{1}{M_{\partial\Omega}}(\mathcal{K}\boldsymbol{\phi}_{\partial\Omega})^T[\boldsymbol{\phi}_{\partial\Omega}, v].
\end{align}

The following proposition provides upper bounds for the Hilbert--Schmidt norms of $L_{\overline{\boldsymbol{x}}_1}-L_{\widehat{\boldsymbol{x}}_1}$ and $L_{\overline{\boldsymbol{x}}_2}-L_{\widehat{\boldsymbol{x}}_2}$.  We recall that for any linear operator $L: \mathcal{H}\mapsto\mathcal{H}$, where $\mathcal{H}$ is a Hilbert space, we denote by $\|L\|_{HS}$ and $\|L\|_2$ the Hilbert--Schmidt norm and the spectral norm, respectively, i.e.
\begin{align}
	\label{hsnsn}
	\|L\|_{HS}=\sqrt{\sum_{i,j}\langle \boldsymbol{e}_i, L\boldsymbol{e}_j\rangle^2_{\mathcal{H}}} \text{ and } \|L\|_2 = \max_{\|v\|_{\mathcal{H}}\leq 1}\|Lv\|_{\mathcal{H}}. 
\end{align}
where $\{\boldsymbol{e}_i, i=1, \dots\}$ is a complete orthogonal basis of $\mathcal{H}$. We note that for any linear operator $L: \mathcal{H}\mapsto\mathcal{H}$, $\|L\|_2\leq \|L\|_{HS}$.

\begin{pro}
	\label{coroHSB}
	Suppose that Assumptions \ref{hypunisp} and \ref{hypliopt} hold. Let $L_{\overline{\boldsymbol{x}}_1}, L_{\overline{\boldsymbol{x}}_2}$ be as in \eqref{defLpsi} and let $L_{\widehat{\boldsymbol{x}}_1}, L_{\widehat{\boldsymbol{x}}_2}$ be given in \eqref{defLindp}. Then, 
	with a probability $1-\delta$,  $0<\delta<1$, there exists a constant $C>0$ such that 
	\begin{align}
		\label{bdintLpsiLphi}
		\begin{split}
			\|L_{\overline{\boldsymbol{x}}_1}-L_{\widehat{\boldsymbol{x}}_1}  \|_{HS}\leq \frac{4C\ln(2/\delta)}{\sqrt{M_\Omega}} \text{ and } \|L_{\overline{\boldsymbol{x}}_2}-L_{\widehat{\boldsymbol{x}}_2}  \|_{HS}\leq \frac{4C\ln(2/\delta)}{\sqrt{M_{\partial\Omega}}}. 
		\end{split}
	\end{align}
\end{pro}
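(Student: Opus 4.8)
The plan is to recognize both $L_{\overline{\boldsymbol{x}}_1}$ and $L_{\widehat{\boldsymbol{x}}_1}$ as empirical averages of one common family of finite-rank operators, so that their difference becomes a \emph{centered} sample mean to which a Hilbert-space concentration inequality applies. For each interior sample point $\boldsymbol{x}\in\overline{\boldsymbol{x}}_1$, let $\boldsymbol{\psi}^{\boldsymbol{x}}$ be the associated vector of functionals as in Assumption \ref{hypliopt}, and define the operator $T_{\boldsymbol{x}}:\mathcal{U}\mapsto\mathcal{U}$ by $T_{\boldsymbol{x}}[v]=(\mathcal{K}\boldsymbol{\psi}^{\boldsymbol{x}})^T[\boldsymbol{\psi}^{\boldsymbol{x}},v]$. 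Grouping the components of $\boldsymbol{\psi}_\Omega$ by the sample point they belong to, \eqref{defLpsi} reads $L_{\overline{\boldsymbol{x}}_1}=\frac{1}{N_\Omega}\sum_{\boldsymbol{x}\in\overline{\boldsymbol{x}}_1}T_{\boldsymbol{x}}$. Since Assumption \ref{hypunisp} guarantees that the interior inducing points $\widehat{\boldsymbol{x}}_1$ are drawn uniformly from $\overline{\boldsymbol{x}}_1$, the operators appearing in \eqref{defLindp} form a uniform subsample, so $L_{\widehat{\boldsymbol{x}}_1}=\frac{1}{M_\Omega}\sum_{\widehat{\boldsymbol{x}}\in\widehat{\boldsymbol{x}}_1}T_{\widehat{\boldsymbol{x}}}$ is an unbiased estimator of $L_{\overline{\boldsymbol{x}}_1}$, i.e.\ $\mathbb{E}[T_{\boldsymbol{x}}]=L_{\overline{\boldsymbol{x}}_1}$ for a uniform random $\boldsymbol{x}$. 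The whole problem thus reduces to controlling the deviation of a sample average of the $T_{\boldsymbol{x}}$ from its mean, measured in the Hilbert space of Hilbert--Schmidt operators on $\mathcal{U}$ equipped with $\langle A,B\rangle=\operatorname{trace}(A^*B)$.

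The next step is to bound each summand uniformly in the HS norm using Assumption \ref{hypliopt}. Since $[\psi_i^{\boldsymbol{x}},v]=\langle \mathcal{K}\psi_i^{\boldsymbol{x}},v\rangle$, the operator $T_{\boldsymbol{x}}=\sum_i (\mathcal{K}\psi_i^{\boldsymbol{x}})\otimes(\mathcal{K}\psi_i^{\boldsymbol{x}})$ (where each term denotes the rank-one map $v\mapsto (\mathcal{K}\psi_i^{\boldsymbol{x}})\langle \mathcal{K}\psi_i^{\boldsymbol{x}},v\rangle$) is positive semi-definite and finite-rank. Hence its HS norm is dominated by its trace, $\|T_{\boldsymbol{x}}\|_{HS}\leq \operatorname{trace}(T_{\boldsymbol{x}})=\sum_i[\psi_i^{\boldsymbol{x}},\mathcal{K}\psi_i^{\boldsymbol{x}}]=\operatorname{trace}(\mathcal{K}(\boldsymbol{\psi}^{\boldsymbol{x}},\boldsymbol{\psi}^{\boldsymbol{x}}))\leq C$, the last step being exactly Assumption \ref{hypliopt}. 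Consequently the centered variable $T_{\boldsymbol{x}}-L_{\overline{\boldsymbol{x}}_1}$ is bounded in HS norm by $2C$ and has variance at most $C^2$.

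Finally, I would apply a Hilbert-space-valued concentration inequality of the type used in \cite{smale2005shannon,smale2009geometry} and \cite{jin2013improved}: for i.i.d.\ $H$-valued variables of norm at most $\tilde M$ and variance at most $\sigma^2$, the mean of $m$ terms deviates from its expectation by at most $\frac{2\tilde M\ln(2/\delta)}{m}+\sqrt{\frac{2\sigma^2\ln(2/\delta)}{m}}$ with probability $1-\delta$. Taking $H$ to be the HS operators, $\tilde M=C$, $\sigma^2\leq C^2$, and $m=M_\Omega$, and collapsing the two terms into a single $O(\ln(2/\delta)/\sqrt{M_\Omega})$ rate (valid once $\ln(2/\delta)\geq 1$) yields $\|L_{\overline{\boldsymbol{x}}_1}-L_{\widehat{\boldsymbol{x}}_1}\|_{HS}\leq \frac{4C\ln(2/\delta)}{\sqrt{M_\Omega}}$; running the identical argument with $\boldsymbol{\psi}_{\partial\Omega}$, $\boldsymbol{\phi}_{\partial\Omega}$, and $M_{\partial\Omega}$ gives the second estimate in \eqref{bdintLpsiLphi}.

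The main obstacle is twofold. First, one must invoke the operator-valued concentration bound with the correct constant, which requires the HS-norm-to-trace reduction of the previous paragraph to be carried out carefully. Second, the uniform sampling of Assumption \ref{hypunisp} is naturally \emph{without} replacement, so the subsampled $T_{\widehat{\boldsymbol{x}}}$ are not independent; I would resolve this either through a Hoeffding--Serfling bound for sampling without replacement, which is no worse than the i.i.d.\ rate, or, following \cite{jin2013improved}, by passing to the with-replacement model and checking that the resulting loss is absorbed into the constant $C$. With these two points settled, the remaining computations are routine.
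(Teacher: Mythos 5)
Your proposal is correct and follows essentially the same route as the paper: both identify $L_{\widehat{\boldsymbol{x}}_1}$ as an unbiased sample mean of the per-point operators $\xi(\boldsymbol{x})[v]=(\mathcal{K}\boldsymbol{\psi}^{\boldsymbol{x}})^T[\boldsymbol{\psi}^{\boldsymbol{x}},v]$, bound each summand's Hilbert--Schmidt norm by a constant via Assumption A2, and invoke the Hilbert-space concentration inequality of Smale--Zhou (Proposition 1 of \cite{smale2009geometry}, Proposition 6 of \cite{jin2013improved}). Your trace-domination shortcut $\|T_{\boldsymbol{x}}\|_{HS}\leq\operatorname{trace}(T_{\boldsymbol{x}})=\operatorname{trace}(\mathcal{K}(\boldsymbol{\psi}^{\boldsymbol{x}},\boldsymbol{\psi}^{\boldsymbol{x}}))$ is a slightly cleaner version of the paper's explicit eigenbasis expansion, and your remark on sampling without replacement addresses a point the paper passes over silently.
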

\begin{proof}
	Here, we adapt the proof of Corollary 8 in \cite{jin2013improved}. Let $\boldsymbol{\psi}^{\boldsymbol{x}}$ be given in Assumption \ref{hypliopt} for $\boldsymbol{x}\in \overline{\boldsymbol{x}}$.  For any $v\in \mathcal{U}$, we define
	\begin{align*}
		\xi(\boldsymbol{x})[v] = \sum_{i=D_b+1}^D\mathcal{K}\boldsymbol{\psi}^{\boldsymbol{x}}_i[\boldsymbol{\psi}^{\boldsymbol{x}}_i, v], \forall \boldsymbol{x}\in \overline{\boldsymbol{x}}_1,  \text{ and } \widehat{\xi}({\boldsymbol{x}})[v] =  \sum_{i=1}^{D_b}\mathcal{K}\boldsymbol{\psi}^{\boldsymbol{x}}_i[\boldsymbol{\psi}^{\boldsymbol{x}}_i, v], \forall \boldsymbol{x}\in \overline{\boldsymbol{x}}_2. 
	\end{align*} 
	Then, $L_{\widehat{\boldsymbol{x}}_1}=\frac{1}{M_{\Omega}}\sum_{i=1}^{M_{\Omega}}\xi(\widehat{\boldsymbol{x}}_i)$, and $L_{\widehat{\boldsymbol{x}}_2}=\frac{1}{M_{\partial\Omega}}\sum_{i=M_\Omega+1}^M\widehat{\xi}({\widehat{\boldsymbol{x}}}_i)$.  According to  Assumption \ref{hypunisp}, $E[\xi(\boldsymbol{x})]=L_{\overline{\boldsymbol{x}}_1}$ and  $E[\widehat{\xi}(\boldsymbol{x})]=L_{\overline{\boldsymbol{x}}_2}$.  Denote by $\mathcal{H}_1$ the span of $\{\varphi_i\}_{i=1}^{R_1}$ and by $\mathcal{H}_2$ the span of $\{\zeta_i\}_{i=1}^{R_2}$, which are endowed with the norm of $\mathcal{U}$. 
	Under Assumption \ref{hypunisp}, we observe that $L_{\overline{\boldsymbol{x}}_1}$ and $L_{\widehat{\boldsymbol{x}}_1}$ map $\mathcal{H}_1$ to $\mathcal{H}_1$, and that  $L_{\overline{\boldsymbol{x}}_2}$ and $L_{\widehat{\boldsymbol{x}}_2}$ map $\mathcal{H}_2$ to $\mathcal{H}_2$.  Thus, by the definition of the Hilbert--Schmidt norm in \eqref{hsnsn}, there exists a constant $C$ such that 
	\begin{align}
		\label{bdhsxi}
		\begin{split}
			\|\xi(\boldsymbol{x})\|_{HS} =& \sqrt{\sum_{j,k}^{R_1}\bigg\langle \sum_{i=D_b+1}^D\mathcal{K}\boldsymbol{\psi}^{\boldsymbol{x}}_i[\boldsymbol{\psi}^{\boldsymbol{x}}_i, \varphi_j], \varphi_k\bigg\rangle^2}
			= \sqrt{\sum_{j,k}^{R_1}\bigg( \sum_{i=D_b+1}^D[\boldsymbol{\psi}^{\boldsymbol{x}}_i, \varphi_j] [\boldsymbol{\psi}^{\boldsymbol{x}}_i, \varphi_k]\bigg)^2}\\
			\leq & C\sqrt{\sum_{j,k}^{R_1}\sum_{i=D_b+1}^D[\boldsymbol{\psi}^{\boldsymbol{x}}_i, \varphi_j]^2 [\boldsymbol{\psi}^{\boldsymbol{x}}_i, \varphi_k]^2}
			=  C\sqrt{\sum_{i=D_b+1}^D\bigg(\sum_{j}^{R_1}[\boldsymbol{\psi}^{\boldsymbol{x}}_i, \varphi_j]^2\bigg)^2}\\
			\leq& C\sum_{i=D_b+1}^D\sum_{j=1}^{R_1}[\boldsymbol{\psi}^{\boldsymbol{x}}_i, \varphi_j]^2 = C\sum_{i=D_b+1}^D[\psi_i^{\boldsymbol{x}}, \mathcal{K}\psi_i^{\boldsymbol{x}}]\leq C,
		\end{split}
	\end{align}
	where the last equality follows by \eqref{eqphikphi} and we use Assumption \ref{hypliopt} in the last inequality. Similarly, we get
	\begin{align}
		\label{bdhsxihat}
		\begin{split}
			\|\widehat{\xi}(\boldsymbol{x})\|_{HS} \leq  C\sum_{i=1}^{D_b}[\psi_i^{\boldsymbol{x}}, \mathcal{K}\psi_i^{\boldsymbol{x}}]\leq C. 
		\end{split}
	\end{align}
	Hence, using \eqref{bdhsxi}, \eqref{bdhsxihat}, and Proposition 1 of \cite{smale2009geometry} (see also Proposition 6 of \cite{jin2013improved}), we conclude \eqref{bdintLpsiLphi}. 
	
\end{proof}
The following proposition gives an upper bound for $\mathcal{E}(\mathcal{H}_a, \boldsymbol{r})$. 
\begin{pro}
	\label{probder}
	Suppose that Assumptions \ref{hypunisp} and \ref{hypliopt} hold. 
	Let $\mathcal{E}(\mathcal{H}_a, \boldsymbol{r})$ be as in \eqref{defEhar}. Let $M_{\partial \Omega}=M-M_{ \Omega}$ and let $N_{\partial \Omega}=N-N_{ \Omega}$.  Then, for any $\boldsymbol{r}:=(r_1, r_2)\in [R_1]\times[R_2]$, with a probability at least $1-\delta$, $0<\delta<1$, there exists a constant $C$ such that 
	\begin{align}
		\label{bdEr}
		\begin{split}
			\mathcal{E}(\mathcal{H}_a, \boldsymbol{r}) 
			\leq & \max\bigg\{ \frac{C N_{\Omega}^2\ln^2(2/\delta)}{\lambda_{r_1}M_\Omega}, \frac{CN_{\partial\Omega}^2\ln^2(2/\delta)}{\tau_{r_2}M_{\partial\Omega}} \bigg\}. 
		\end{split}
	\end{align}
\end{pro}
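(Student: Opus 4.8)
The plan is to construct, for each $h\in\mathcal{H}_b^{\boldsymbol{r}}$, an explicit approximant $v\in\mathcal{H}_a$ built from the inducing-point operators $L_{\widehat{\boldsymbol{x}}_1},L_{\widehat{\boldsymbol{x}}_2}$, and then to control $\|h-v\|_{\mathcal{U}}$ via the Hilbert--Schmidt bounds of Proposition \ref{coroHSB}. By \eqref{deHbeig} and \eqref{sbuspaces}, any $h\in\mathcal{H}_b^{\boldsymbol{r}}$ has the form $h=\sum_{i=1}^{r_1}b_{1,i}\sqrt{\lambda_i}\varphi_i+\sum_{i=1}^{r_2}b_{2,i}\sqrt{\tau_i}\zeta_i$ with $\sum_{i=1}^{r_1}b_{1,i}^2+\sum_{i=1}^{r_2}b_{2,i}^2\le 1$. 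Since $\varphi_i$ is an eigenfunction of $L_{\overline{\boldsymbol{x}}_1}$ with eigenvalue $\lambda_i/N_\Omega$ by Proposition \ref{kbasics}, one has $\varphi_i=\tfrac{N_\Omega}{\lambda_i}L_{\overline{\boldsymbol{x}}_1}\varphi_i$; replacing $L_{\overline{\boldsymbol{x}}_1}$ by $L_{\widehat{\boldsymbol{x}}_1}$ defines $v_i:=\tfrac{N_\Omega}{\lambda_i}L_{\widehat{\boldsymbol{x}}_1}\varphi_i$, and similarly $w_i:=\tfrac{N_{\partial\Omega}}{\tau_i}L_{\widehat{\boldsymbol{x}}_2}\zeta_i$. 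Because the ranges of $L_{\widehat{\boldsymbol{x}}_1}$ and $L_{\widehat{\boldsymbol{x}}_2}$ lie in $\operatorname{span}\{\mathcal{K}\boldsymbol{\phi}_\Omega\}$ and $\operatorname{span}\{\mathcal{K}\boldsymbol{\phi}_{\partial\Omega}\}$ respectively by \eqref{defLindp}, the function $v:=\sum_{i=1}^{r_1}b_{1,i}\sqrt{\lambda_i}v_i+\sum_{i=1}^{r_2}b_{2,i}\sqrt{\tau_i}w_i$ belongs to $\mathcal{H}_a$, so that $\mathcal{E}(h,\mathcal{H}_a)\le\|h-v\|_{\mathcal{U}}^2$.

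Next I would exploit the linearity of $L_{\overline{\boldsymbol{x}}_1}-L_{\widehat{\boldsymbol{x}}_1}$ to rewrite the error as $h-v=N_\Omega(L_{\overline{\boldsymbol{x}}_1}-L_{\widehat{\boldsymbol{x}}_1})g_1+N_{\partial\Omega}(L_{\overline{\boldsymbol{x}}_2}-L_{\widehat{\boldsymbol{x}}_2})g_2$, where $g_1:=\sum_{i=1}^{r_1}\tfrac{b_{1,i}}{\sqrt{\lambda_i}}\varphi_i$ and $g_2:=\sum_{i=1}^{r_2}\tfrac{b_{2,i}}{\sqrt{\tau_i}}\zeta_i$. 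Since $\{\varphi_i\}$ and $\{\zeta_i\}$ are $\mathcal{U}$-orthonormal and the eigenvalues are sorted in descending order, $\|g_1\|_{\mathcal{U}}^2=\sum_{i=1}^{r_1}b_{1,i}^2/\lambda_i\le s_1/\lambda_{r_1}$ and $\|g_2\|_{\mathcal{U}}^2\le s_2/\tau_{r_2}$, where $s_1:=\sum_i b_{1,i}^2$ and $s_2:=\sum_i b_{2,i}^2$. Recalling from the proof of Proposition \ref{coroHSB} that $L_{\overline{\boldsymbol{x}}_1}-L_{\widehat{\boldsymbol{x}}_1}$ maps $\mathcal{H}_1$ to $\mathcal{H}_1$ (and likewise for the boundary operators), the triangle inequality together with $\|\cdot\|_2\le\|\cdot\|_{HS}$ and Proposition \ref{coroHSB} gives $\|h-v\|_{\mathcal{U}}\le a\sqrt{s_1}+b\sqrt{s_2}$, with $a:=N_\Omega\|L_{\overline{\boldsymbol{x}}_1}-L_{\widehat{\boldsymbol{x}}_1}\|_2/\sqrt{\lambda_{r_1}}$ and $b:=N_{\partial\Omega}\|L_{\overline{\boldsymbol{x}}_2}-L_{\widehat{\boldsymbol{x}}_2}\|_2/\sqrt{\tau_{r_2}}$.

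Finally I would collapse the interior and boundary contributions into a single maximum: by Cauchy--Schwarz and the constraint $s_1+s_2\le1$, $(a\sqrt{s_1}+b\sqrt{s_2})^2\le(a^2+b^2)(s_1+s_2)\le a^2+b^2\le 2\max\{a^2,b^2\}$. Substituting the Hilbert--Schmidt bounds $\|L_{\overline{\boldsymbol{x}}_1}-L_{\widehat{\boldsymbol{x}}_1}\|_2^2\le 16C^2\ln^2(2/\delta)/M_\Omega$ and $\|L_{\overline{\boldsymbol{x}}_2}-L_{\widehat{\boldsymbol{x}}_2}\|_2^2\le 16C^2\ln^2(2/\delta)/M_{\partial\Omega}$ from Proposition \ref{coroHSB}, taking the supremum over $h\in\mathcal{H}_b^{\boldsymbol{r}}$ (the bound being uniform in the coefficients), and absorbing the numerical constant into $C$ yields \eqref{bdEr}. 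The main obstacle is a structural rather than computational one: I must ensure the approximant genuinely lies in $\mathcal{H}_a$ and that the two unrelated eigenbases combine without cross terms spoiling the stated maximum. The constraint $s_1+s_2\le1$ is exactly what permits converting the sum coming from the triangle inequality into the maximum appearing in \eqref{bdEr}, and correctly attributing each error contribution to its own operator-norm bound is the delicate step.
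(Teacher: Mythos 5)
Your proof is correct and follows essentially the same route as the paper: you construct the approximant in $\mathcal{H}_a$ by applying $L_{\widehat{\boldsymbol{x}}_1}, L_{\widehat{\boldsymbol{x}}_2}$ to the rescaled preimages $g_1, g_2$ (which the paper writes as $h_1/N_\Omega, h_2/N_{\partial\Omega}$ inside the $\delta$-parametrized sets $\mathcal{H}_{c,\delta}^{\boldsymbol{r}}$), bound $\|g_1\|_{\mathcal{U}}^2\le s_1/\lambda_{r_1}$ using the descending eigenvalue order, and control the difference operators by their Hilbert--Schmidt norms via Proposition \ref{coroHSB}. Your use of Cauchy--Schwarz with the constraint $s_1+s_2\le 1$ is just a repackaging of the paper's maximization over the splitting parameter $\delta\in[0,1]$, and yields the same bound.
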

\begin{proof}
	The proof is an adaption of the arguments of Theorem 7 in \cite{jin2013improved}. 
	Let $\{\lambda_i\}_{i=1}^{R_1}$, $\{\tau_i\}_{i=1}^{R_2}$, $\{\varphi_i\}_{i=1}^{R_1}$, and  $\{\zeta_i\}_{i=1}^{R_2}$ be the eigenvalues and the eigenfunctions in Proposition \ref{kbasics}. 
	Given $\delta\in [0, 1]$ and $\boldsymbol{r}:=(r_1, r_2)\in [R_1]\times[R_2]$, we define 
	\begin{align*}
		\begin{split}
			\mathcal{H}_{c, \delta}^{\boldsymbol{r}}=&\bigg\{h=\sum_{i=1}^{r_1}c_{1,i}\sqrt{\lambda_i}\varphi_i,\frac{1}{N_\Omega^2} \sum_{i=1}^{r_1}c_{1,i}^2\lambda_i^2\leq \delta\bigg\} \text{ and } \overline{\mathcal{H}}_{c, \delta}^{\boldsymbol{r}}=\bigg\{h=\sum_{i=1}^{r_2}c_{2,i}\sqrt{\tau_i}\zeta_i, \frac{1}{N_{\partial \Omega}^2}\sum_{i=1}^{r_2}c_{2,i}^2\tau_i^2\leq 1-\delta\bigg\},\\
			\mathcal{H}_{d,\delta}^{\boldsymbol{r}}=&\{v\in \mathcal{U}, \|v\|^2_{\mathcal{U}}\leq\delta N_{\Omega}^2/\lambda_{r_1}\}, \text{ and } \overline{\mathcal{H}}_{d,\delta}^{\boldsymbol{r}}=\{v\in \mathcal{U}, \|v\|^2_{\mathcal{U}}\leq (1-\delta)N_{\partial\Omega}^2/\tau_{r_2}\}.
		\end{split}
	\end{align*}
	Thus, $\mathcal{H}_{c,\delta}^{\boldsymbol{r}}\subset{\mathcal{H}}_{d,\delta}^{\boldsymbol{r}}$ and $\overline{{\mathcal{H}}}_{c,\delta}^{\boldsymbol{r}}\subset\overline{\mathcal{H}}_{d,\delta}^{\boldsymbol{r}}$. Meanwhile, for $h_1\in \mathcal{H}_{c,\delta}^{\boldsymbol{r}}$ and $h_2\in \overline{\mathcal{H}}_{c,\delta}^{\boldsymbol{r}}$, using Proposition \ref{kbasics}, we have
	\begin{align}
		\label{lpsih}
		\begin{split}
			L_{\overline{\boldsymbol{x}}_1}[h_1] + L_{\overline{\boldsymbol{x}}_2}[h_2]=& \frac{1}{N_\Omega}\sum_{i=1}^{r_1}c_{1,i}\sqrt{\lambda_i}(\mathcal{K}\boldsymbol{\psi}_{\Omega})^T[\boldsymbol{\psi}_{\Omega}, \varphi_i]+\frac{1}{N_{\partial\Omega}}\sum_{i=1}^{r_2}c_{2,i}\sqrt{\tau_i}(\mathcal{K}\boldsymbol{\psi}_{\partial\Omega})^T[\boldsymbol{\psi}_{\partial\Omega}, \zeta_i]\\
			=& \frac{1}{N_\Omega}\sum_{i=1}^{r_1}c_{1,i}\lambda_i(\mathcal{K}\boldsymbol{\psi}_\Omega)^T\boldsymbol{v}_i+\frac{1}{N_{\partial\Omega}}\sum_{i=1}^{r_2}c_{2,i}\tau_i(\mathcal{K}\boldsymbol{\psi}_{\partial \Omega})^T\boldsymbol{w}_i\\ 
			=& \frac{1}{N_\Omega}\sum_{i=1}^{r_1}c_{1,i}\lambda_i\sqrt{\lambda_i}\varphi_i+\frac{1}{N_{\partial\Omega}}\sum_{i=1}^{r_2}c_{2,i}\tau_i\sqrt{\tau_i}\zeta_i.
		\end{split}
	\end{align}
	Thus, \eqref{lpsih} implies that for any $h\in \mathcal{H}_b^r$, there exist $\delta\in [0, 1]$, $h_1\in \mathcal{H}_{c,\delta}^{\boldsymbol{r}}$, and $h_2\in \overline{\mathcal{H}}_{c,\delta}^{\boldsymbol{r}}$ such that $h=L_{\overline{\boldsymbol{x}}_1}[h_1] + L_{\overline{\boldsymbol{x}}_2}[h_2]$. Hence, we have
	\begin{align*}
		\begin{split}
			\mathcal{E}(\mathcal{H}_a, r)=& \max_{h\in \mathcal{H}_b^r}\mathcal{E}(g, \mathcal{H}_a) = \max_{\substack{h_1\in \mathcal{H}_{c,\delta}^{\boldsymbol{r}}\\ h_2\in \overline{\mathcal{H}}_{c,\delta}^{\boldsymbol{r}}}}\min_{v\in \mathcal{H}_a}\|L_{\overline{\boldsymbol{x}}_1}[h_1] + L_{\overline{\boldsymbol{x}}_2}[h_2]-v\|^2_{\mathcal{U}}\\
			\leq& \max_{\substack{ \substack{h_1\in \mathcal{H}_{d,\delta}^{\boldsymbol{r}}\\ h_2\in \overline{\mathcal{H}}_{d,\delta}^{\boldsymbol{r}}} }}\min_{v\in \mathcal{H}_a}\|L_{\overline{\boldsymbol{x}}_1}[h_1] + L_{\overline{\boldsymbol{x}}_2}[h_2]-v\|^2_{\mathcal{U}}. 
		\end{split}
	\end{align*}
	By the definitions of $L_{\widehat{\boldsymbol{x}}_1}$ and $L_{\widehat{\boldsymbol{x}}_2}$, we have $L_{\widehat{\boldsymbol{x}}_1}[h_1]+ L_{\widehat{\boldsymbol{x}}_2}[h_2] \in \mathcal{H}_a$. Thus, we get
	\begin{align}
		\label{bdhar1}
		\begin{split}
			\mathcal{E}(\mathcal{H}_a, r) \leq& \max_{\substack{ \substack{h_1\in \mathcal{H}_{d,\delta}^{\boldsymbol{r}}\\ h_2\in \overline{\mathcal{H}}_{d,\delta}^{\boldsymbol{r}}} }}\min_{v\in \mathcal{H}_a}\|L_{\overline{\boldsymbol{x}}_1}[h_1] + L_{\overline{\boldsymbol{x}}_2}[h_2]-v\|^2_{\mathcal{U}}\\
			\leq& \max_{\substack{ \substack{h_1\in \mathcal{H}_{d,\delta}^{\boldsymbol{r}}\\ h_2\in \overline{\mathcal{H}}_{d,\delta}^{\boldsymbol{r}}} }}\|L_{\overline{\boldsymbol{x}}_1}[h_1] + L_{\overline{\boldsymbol{x}}_2}[h_2]-L_{\widehat{\boldsymbol{x}}_1}[h_1] - L_{\widehat{\boldsymbol{x}}_2}[h_2]\|^2_{\mathcal{U}}\\
			\leq & \max_{\delta\in [0,1]}\bigg\{\frac{2\delta N_\Omega^2}{\lambda_{r_1}}\|L_{\overline{\boldsymbol{x}}_1}-L_{\widehat{\boldsymbol{x}}_1}\|_2^2 + \frac{2(1-\delta)N_{\partial\Omega}^2}{\tau_{r_2}}\|L_{\overline{\boldsymbol{x}}_2}-L_{\widehat{\boldsymbol{x}}_2}\|_2^2\bigg\}\\
			\leq & \max\bigg\{ \frac{2 N_{\Omega}^2}{\lambda_{r_1}}\|L_{\overline{\boldsymbol{x}}_1}-L_{\widehat{\boldsymbol{x}}_1}\|_{HS}^2, \frac{2N_{\partial\Omega}^2}{\tau_{r_2}}\|L_{\overline{\boldsymbol{x}}_2}-L_{\widehat{\boldsymbol{x}}_2}\|_{HS}^2 \bigg\}.
		\end{split}
	\end{align}
	Thus, \eqref{bdhar1} together with Proposition \ref{coroHSB} implies that \eqref{bdEr} holds. 
\end{proof}
Finally, we are ready to give a bound for $\mathcal{E}(\mathcal{H}_a)$. 
\begin{proof}[{\color{black}Proof of Theorem \ref{corobdls}}]
	The inequality \eqref{bdE} follows directly from Propositions \ref{probdeha} and \ref{probder}. 
\end{proof}

\end{document}